\documentclass[usletter,12pt]{article}
\usepackage{basic_template}
\theoremstyle{plain}
\newtheorem{example}{Example}%
\usepackage{booktabs}
\usepackage{mathtools}

\providecommand\numberthis{\addtocounter{equation}{1}\tag{\theequation}}

\begin{document}

\title{Stochastic Optimization Algorithms for Problems with Controllable Biased Oracles}

\author{Yin Liu\footnotemark[1] \and Sam Davanloo Tajbakhsh\footnotemark[2]}

          

\renewcommand{\thefootnote}{\fnsymbol{footnote}}
\footnotetext[1]{Beijing International Center for Mathematical Research, Peking University, Beijing, China; }
\footnotetext[2]{The Ohio State University, Columbus, OH, USA; \texttt{davanloo.1@osu.edu}}

\renewcommand{\thefootnote}{\arabic{footnote}}

\maketitle

\begin{abstract}
Motivated by multiple emerging applications in machine learning, we consider an optimization problem in a general form where the gradient of the objective function is available through a biased stochastic oracle.
We assume a bias-control parameter can reduce the bias magnitude; however, a lower bias requires more computation/samples.
For instance, in two applications on stochastic composition optimization and policy optimization for infinite-horizon Markov decision processes, we show that the bias follows a power law and exponential decay, respectively, as functions of their corresponding bias control parameters.
For problems with such gradient oracles, the paper proposes stochastic algorithms that adjust the bias-control parameter throughout the iterations.
We analyze the nonasymptotic performance of the proposed algorithms in the nonconvex regime and establish their sample or bias-control computation complexities to obtain a stationary point in expectation or with high probability.
Finally, we numerically evaluate the performance of the proposed algorithms over three applications.
\end{abstract}


\section{Introduction}
First-order stochastic approximation methods play a significant role in different fields, especially, in machine learning from large datasets or with models containing many parameters~\cite{bottou2018optimization}.
These algorithms primarily require stochastic estimates of the gradient map of the objective function and their convergence properties heavily depend on the unbiasedness of the gradient estimates.

In many emerging applications, however, obtaining unbiased gradient estimates is either impossible or computationally expensive.
This work focuses on problem setups where the objective functions' structures introduce bias into the gradient estimate; however, the bias can be controlled (i.e., reduced) by extra computation, e.g.,
nested composition optimization~\cite{wang2017StochasticCompositional,ghadimi2020single}, 
conditional stochastic optimization~\cite{hu2020sample,hu2020biased}, distributionally robust optimization~\cite{levy2020LargeScaleMethods,ghosh2020unbiased},
infinite-horizon Markov decision processes~\cite{baxter2001infinite,liu2018breaking}, and 
bilevel optimization~\cite{ghadimi2018approximation,ji2021bilevel,khanduri2021near,chen2021closing,hong2020two}.

We note that for \emph{some} problems with biased stochastic oracles, optimal algorithms, i.e., those with complexity bounds matching the lower bounds, exist.  These algorithms primarily exploit the problem structures and ideas from optimal algorithms for other problem setups in their design. This work, however, intends to provide a general framework containing such problems and then considers modification of some optimal algorithms for unbiased setups that iteratively adjust the gradient bias while considering the corresponding computational complexity.

Consider the optimization problem
\begin{align}
	\label{eq:main}
	\min_{\bx\in\mR^d} \Psi(\bx) \triangleq   f(\bx)+ r(\bx),
\end{align}
where \(f(\bx):\mR^d\rightarrow\mR\) is a smooth function and \(r(\bx)\) is a convex and possibly nonsmooth function.
We assume that we only have access to a \emph{biased} stochastic gradient oracle \(\grad f^\eta_\xi(\bx)\) where \(\eta\in\mR_{++}\) (or possibly \(\eta\in\mN_+\)) is the bias-control parameter, \(\xi\in\Xi\) is a random vector, and \(\lim_{\eta \rightarrow\infty}\mE[\grad f^\eta_\xi(\bx)] = \grad f(\bx)\).
We assume that the bias and variance of the stochastic gradient estimator are upper bounded by known functions (see \cref{asp:estimator-bounds}), and the bias can be reduced by increasing \(\eta\) but with the cost of more computation or samples.

The following three applications consider problem \eqref{eq:main} where the upper bounds of the stochastic gradient estimators' biases follow power law or exponential decay structures.

\subsection{Motivating examples}
\begin{example}[Composition optimization]
	Consider the stochastic composition problem of the form
	\begin{align*}
		\min_{\bx \in \mR^d} f(g(\bx)) \ \text{where} \ f(\bu) \triangleq  \mathbb{E}_\varphi[f_\varphi(\bu)], \ g(\bx) \triangleq  \mathbb{E}_\xi [g_\xi(\bx)],
	\end{align*}
	and \(f_\varphi, \ g_\xi\) are differentiable \(\forall \xi, \varphi\).
	Stochastic composition problem appears in a range of applications, e.g., the policy evaluation for Markov decision process~\cite{dann2014policy}, risk-averse portfolio optimization~\cite{shapiro2021lectures}, and model-agnostic meta-learning problem~\cite{finn2017model,fallah2020convergence}. 
	The challenge to optimize this problem is to obtain an unbiased gradient estimator as
	\begin{align*}
		\mathbb{E}_{\varphi,\xi}\left[  \grad g_\xi(\bx)\grad f_\varphi(\mathbb{E}_{\xi}[g_\xi(\bx)])  \right] \neq \mathbb{E}_{\varphi,\xi}\left[ \grad g_\xi(\bx)\grad f_\varphi(g_\xi(\bx))   \right].
	\end{align*}
	In \cref{lem:power_decay_composition} below, we show that under some smoothness assumptions, the bias of the mini-batch stochastic estimator of the gradient of the composition follows a power law decay as a function of the inner function value batch size \(|\cB_g|\).
\end{example}

\begin{lemma}\label{lem:power_decay_composition}
	Assume the following inequalities hold, \(\mathbb{E}_\varphi[\norm{\grad f_\varphi(\bu) - \grad f_\varphi(\bu')}^2]\leq L_f^2\norm{\bu-\bu'}^2\), \(\mathbb{E}_\xi[\norm{g_\xi(\bx)-g(\bx)}^2]\leq \sigma_g^2\), and \(\mathbb{E}_\xi [\norm{\grad g_\xi(\bx)}^2] \leq C_g^2\), \(\mathbb{E}_\varphi[\norm{\grad f_\varphi(\bu) }^2]\leq C_f^2\). Define
	{\small\begin{align*}
		\bar{g}_\xi(\bx) \triangleq  \frac{1}{\abs{\cB_g}}  \sum_{\xi \in \cB_g}g_\xi(\bx),  \
		\grad \bar{g}_\xi(\bx) \triangleq  \frac{1}{\abs{\cB_{\grad g}}} \sum_{\xi \in \cB_{\grad g}}\grad g_\xi(\bx), \
		\grad \bar{f}_\varphi(\bu) \triangleq  \frac{1}{\abs{\cB_{\grad f}}}\sum_{\varphi\in \cB_{\grad f}} \grad f_\varphi(\bu).
	\end{align*}}
	When \(\cB_g\) and \(\cB_{\grad g}\) are independent of each other, the bias is bounded as
	\begin{equation}
		\label{eq:bias_composite}
		\begin{split}
			& \norm{\mathbb{E}[\grad \bar{g}_\xi(\bx) \grad \bar{f}_\varphi(\bar{g}_\xi(\bx))]- \grad g(\bx)\grad f(g(\bx)) }	 \leq  C_gL_f\sigma_g \abs{\cB_g}^{-1/2}.
		\end{split}
	\end{equation}
\end{lemma}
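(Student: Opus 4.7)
The plan is to exploit the stated independence between $\cB_g$ and $\cB_{\grad g}$ (and the implicit independence of $\cB_{\grad f}$) to factorize the expectation, reduce the bias to a single gradient-of-$f$ composition error, and then apply the stated mean-square Lipschitz property together with the variance bound on $\bar g_\xi(\bx)$.

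\paragraph{Step 1: Factorize via independence.}
First I would condition on $\cB_g$ and take expectation over $\cB_{\grad f}$ (which only enters $\grad\bar f_\varphi$), using $\mathbb{E}_\varphi[\grad f_\varphi(\bu)]=\grad f(\bu)$ for any \emph{fixed} $\bu$, to obtain
\[
\mathbb{E}\bigl[\grad\bar g_\xi(\bx)\,\grad\bar f_\varphi(\bar g_\xi(\bx))\bigr]
=\mathbb{E}_{\cB_g,\cB_{\grad g}}\bigl[\grad\bar g_\xi(\bx)\,\grad f(\bar g_\xi(\bx))\bigr].
\]
Next, because $\cB_g$ and $\cB_{\grad g}$ are independent and $\mathbb{E}[\grad\bar g_\xi(\bx)]=\grad g(\bx)$, the remaining expectation factorizes to
\[
\mathbb{E}\bigl[\grad\bar g_\xi(\bx)\,\grad\bar f_\varphi(\bar g_\xi(\bx))\bigr]
=\grad g(\bx)\,\mathbb{E}_{\cB_g}\bigl[\grad f(\bar g_\xi(\bx))\bigr].
\]

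\paragraph{Step 2: Reduce the bias and bound.}
Subtracting $\grad g(\bx)\grad f(g(\bx))$ and taking norms gives
\[
\bigl\|\mathbb{E}[\grad\bar g_\xi(\bx)\grad\bar f_\varphi(\bar g_\xi(\bx))]-\grad g(\bx)\grad f(g(\bx))\bigr\|
\le \|\grad g(\bx)\|\cdot\bigl\|\mathbb{E}_{\cB_g}[\grad f(\bar g_\xi(\bx))-\grad f(g(\bx))]\bigr\|.
\]
I would bound $\|\grad g(\bx)\|\le C_g$ by Jensen's inequality applied to $\mathbb{E}_\xi[\|\grad g_\xi(\bx)\|^2]\le C_g^2$, and bound the second factor by passing the norm inside the expectation (Jensen again) and using the $L_f$-mean-square Lipschitz property: first note that assumption $\mathbb{E}_\varphi[\|\grad f_\varphi(\bu)-\grad f_\varphi(\bu')\|^2]\le L_f^2\|\bu-\bu'\|^2$ combined with Jensen yields $\|\grad f(\bu)-\grad f(\bu')\|\le L_f\|\bu-\bu'\|$. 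Therefore
\[
\bigl\|\mathbb{E}_{\cB_g}[\grad f(\bar g_\xi(\bx))-\grad f(g(\bx))]\bigr\|\le L_f\,\mathbb{E}_{\cB_g}\|\bar g_\xi(\bx)-g(\bx)\|
\le L_f\sqrt{\mathbb{E}_{\cB_g}\|\bar g_\xi(\bx)-g(\bx)\|^2}.
\]

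\paragraph{Step 3: Variance reduction of the inner batch.}
Since the samples in $\cB_g$ are i.i.d.\ with per-sample variance at most $\sigma_g^2$ (by the assumption $\mathbb{E}_\xi[\|g_\xi(\bx)-g(\bx)\|^2]\le\sigma_g^2$), one has $\mathbb{E}\|\bar g_\xi(\bx)-g(\bx)\|^2\le\sigma_g^2/|\cB_g|$. Substituting everything together yields the claimed $C_gL_f\sigma_g|\cB_g|^{-1/2}$ bound.

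\paragraph{Main obstacle.}
The only delicate step is Step~1: it is the factorization of the expectation that collapses the problem to bounding a single composition error. Everything else is a clean chain of Jensen's inequality, the Lipschitz-in-mean-square property, and the standard i.i.d.\ variance reduction; these are routine once the independence has been exploited.
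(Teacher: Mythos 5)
Your proof is correct and follows essentially the same route as the paper's: both isolate the bias as arising solely from evaluating the outer gradient at $\bar g_\xi(\bx)$ instead of $g(\bx)$, exploit the independence of the batches, apply the (mean-square) Lipschitz property of $\grad f$, and finish with the standard $\sigma_g^2/|\cB_g|$ variance reduction. The only cosmetic difference is that you factor out the deterministic $\grad g(\bx)$ at the start, whereas the paper carries the stochastic $\grad\bar g_\xi(\bx)$ through and bounds its second moment by $C_g^2$ at the end via the same independence.
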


We also note that the variance of the stochastic gradient can be shown to be a decreasing function of \(\eta\), the batch size of the inner function value.

\begin{example}[Infinite-horizon Markov decision process]
	Consider a discrete-time Markov decision process (MDP) \( \{\cS, \cA, P, \hat{r}\}\) where \(\cS\) is the state space, \(\cA\) is the action space, \(P\) is a Markovian transition kernel with \(P(s'|s, a)\) being the transition probability from state \(s\) to \(s'\) given action \(a\), and \(\hat{r}\) is the reward function with \(\hat{r}(s, a)\) being the received reward from state \(s\)  with action \(a\).
	Let the agent's decision be modeled by a parameterized policy \(\pi_\theta\), with \(\pi_\theta(a|s)\) being the probability of taking action \(a\) at the state \(s\).
	Furthermore, let the trajectory \(\tau\) generated by this MDP be a sequence \(\{s_0,a_0,s_1,a_1,\dots,s_t,a_t,\cdots\}\) with the probability distribution
	\[
		p(\tau|\theta) = \rho(s_0)\pi_\theta(a_0|s_0)P(s_1|s_0,a_0)\pi_\theta(a_1|s_1)
		\ldots,
	\]
	where \(\rho(s_0)\) is the probability distribution of \(s_0\).
	Finally, given \(0<\gamma<1\), let the discounted total reward of the trajectory \(\tau\) be defined as \(R(\tau) \triangleq  \sum_{t=0}^\infty \gamma^t \hat{r}(s_t,a_t)\).
	The goal is to find a policy that maximizes the expected discounted reward
	{\begin{align*}
		\max_\theta J(\theta) \triangleq  \mathbb{E}_{\tau \sim p(\tau|\theta)}[R(\tau)],
	\end{align*}}
	which can indeed be solved by the policy gradient method.
	The gradient of \(J(\theta)\) can be calculated as
	\begin{align*}
		\grad J(\theta) = \mathbb{E}_{\tau \sim p(\tau |\theta)}
		\left[\sum_{t=0}^\infty\gamma^t Q^{\pi_\theta}(s_t,a_t)\grad \log\pi_\theta(a_t|s_t) \right],
	\end{align*}
	where \( Q^{\pi_\theta}(s,a)  = \mathbb{E}_{a_t\sim \pi_\theta(\cdot |s_t), s_{t+1}\sim P(\cdot|s_t,a_t)} \left[\sum_{t=0}^\infty \gamma^t \hat{r}(s_t,a_t)| s_0=s,a_0=a\right]\)
	is the action-value function.
	To get an unbiased estimator of \(\grad J(\theta)\), one has to sample trajectories with infinite horizons, which is impossible in practice.
	Instead, the trajectories are truncated at some large enough time \(T\)~\cite{baxter2001infinite,liu2018breaking}.
	This truncation introduces bias into the stochastic estimator of the gradient which decays exponentially with \(T\) as shown in \cref{lem:exp_decay_MDP}.
\end{example}

\begin{lemma}
	\label{lem:exp_decay_MDP}
	For a random infinite-horizon trajectory \(\tau\sim p(\tau|\theta)\), define
	{\small\begin{align*}
		\tilde{\grad}J(\theta) \triangleq  \sum_{t=0}^\infty \gamma^t \left(\sum_{h=t}^\infty\gamma^{h-t}\hat{r}(s_h,a_h)\right)\grad \log\pi_\theta (a_t|s_t), \\
		\tilde{\grad}J_T(\theta) \triangleq  \sum_{t=0}^T \gamma^t \left(\sum_{h=t}^T\gamma^{h-t}\hat{r}(s_h,a_h)\right)\grad \log\pi_\theta (a_t|s_t).
	\end{align*}}
	Assume \(\forall a,s\), \(\abs{\hat{r}(s,a)}\leq \bar{r}\), \(\norm{\grad \log \pi_\theta(a|s)}\leq \bar{c} \), then \(\mathbb{E}_\tau[\tilde{\grad}J(\theta)] = \grad J(\theta)\) and
	{\small\begin{equation}
		\label{eq:bias_RL}
		\begin{split}
			\norm{\mathbb{E}_\tau [\tilde{\grad}J_T(\theta)]	- \mathbb{E}_\tau[\tilde{\grad}J(\theta)]}
			\leq \bar{r}\bar{c}\left((1+T)\frac{\gamma^{T+1}}{1-\gamma} + \frac{\gamma^{T+1}}{(1-\gamma)^2} \right).
		\end{split}
	\end{equation}}
\end{lemma}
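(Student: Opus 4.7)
The result has two parts: unbiasedness of the infinite-horizon estimator, and the truncation bias bound. My plan is to handle the unbiasedness via the standard likelihood-ratio (REINFORCE) identity together with a causality argument, and the bias bound via a clean split of the difference $\tilde\grad J(\theta)-\tilde\grad J_T(\theta)$ into two pieces that each collapse to geometric sums.

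For unbiasedness, I would start from $\grad J(\theta)=\mathbb{E}_\tau[R(\tau)\grad\log p(\tau|\theta)]$. Because $p(\tau|\theta)=\rho(s_0)\prod_{t\geq 0}\pi_\theta(a_t|s_t)P(s_{t+1}|s_t,a_t)$ and the transition kernel is $\theta$-free, $\grad\log p(\tau|\theta)=\sum_{t\geq 0}\grad\log\pi_\theta(a_t|s_t)$. Expanding $R(\tau)=\sum_h\gamma^h r(s_h,a_h)$ and interchanging sum and expectation (justified by dominated convergence, using $|r|\leq R$, $\|\grad\log\pi_\theta\|\leq C$, and $\sum_h\gamma^h<\infty$) yields $\grad J(\theta)=\mathbb{E}_\tau\!\left[\sum_{h,t}\gamma^h r(s_h,a_h)\grad\log\pi_\theta(a_t|s_t)\right]$. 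For the cross-terms with $h<t$, conditioning on the $\sigma$-field $\mathcal F_t$ generated by $\{s_0,a_0,\ldots,s_t\}$ makes $\gamma^h r(s_h,a_h)$ measurable, while $\mathbb{E}[\grad\log\pi_\theta(a_t|s_t)\mid\mathcal F_t]=\int\grad\pi_\theta(a|s_t)\,da=0$, so those terms vanish. The surviving terms with $h\geq t$, after writing $\gamma^h=\gamma^t\gamma^{h-t}$, reassemble exactly into $\tilde\grad J(\theta)$.

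For the truncation bias, I would write $\tilde\grad J(\theta)-\tilde\grad J_T(\theta)=A+B$, where $A$ collects the terms with $0\leq t\leq T$ but inner index $h>T$, and $B$ collects all terms with $t>T$. Using Jensen's inequality to move the norm inside the expectation, followed by the triangle inequality and the bounds $|r(s,a)|\leq R$ and $\|\grad\log\pi_\theta(a|s)\|\leq C$: in $A$, the inner tail is bounded by $R\sum_{h=T+1}^\infty\gamma^{h-t}=R\gamma^{T+1-t}/(1-\gamma)$, so each of the $T+1$ outer summands contributes $RC\gamma^{T+1}/(1-\gamma)$, giving the $(1+T)\gamma^{T+1}/(1-\gamma)$ piece. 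In $B$, the inner sum is bounded by $R/(1-\gamma)$ uniformly in $t$, and the outer geometric tail starting at $t=T+1$ yields $RC\gamma^{T+1}/(1-\gamma)^2$. Adding the two pieces reproduces \eqref{eq:bias_RL}.

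The main obstacle is the unbiasedness half: it needs both a Fubini/dominated-convergence justification for interchanging expectation with a doubly infinite sum and the causality step that zeroes out the $h<t$ cross-terms. Once these are in place, the bias bound is a direct two-stage geometric-series computation and requires no new ideas beyond the triangle inequality and the two uniform bounds $|r|\leq R$ and $\|\grad\log\pi_\theta\|\leq C$.
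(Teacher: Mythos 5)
Your proposal is correct and follows essentially the same route as the paper: the bias bound uses the identical decomposition into the truncated-inner-tail terms ($t\leq T$, $h>T$) and the outer-tail terms ($t>T$), each collapsing to the same geometric sums, with Jensen's inequality applied at the end. The only difference is cosmetic: for the unbiasedness claim the paper simply cites Lemma 11.7 of \cite{agarwal2019reinforcement}, whereas you spell out the standard likelihood-ratio plus causality argument that underlies that reference.
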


We conjecture that the variance of the policy gradient oracle is an increasing but bounded function of \(\eta\), the trajectory length, under proper assumptions.

\begin{example}[Distributionally robust optimization]
	Distributionally robust optimization (DRO) is an approach that minimizes the expected loss while considering the worst-case distribution within a predefined uncertainty set. 
 It requires solving
	{\begin{align*}
		\min_\bx \cL(\bx;P_0) + r(\bx), \text{ with } \cL(\bx;P_0) \triangleq  \sup_{Q\in \cU(P_0)} \mE_{S\sim Q}[l(\bx;S)],
	\end{align*}}
where \(\cL\) is a loss function, \(P_0\) is the nominal distribution, \(\cU(P_0)\) is some uncertainty set around \(P_0\), and \(r(\cdot)\) is a regularizer. 
 Different uncertainty sets lead to different formulations. For instance, conditional value at risk (CVaR) at level \(\alpha\) is DRO under \(\ell_\infty\) distance defined as
	{\small\begin{align*}
		\cL_{\text{CVaR}}(\bx;P_0) \triangleq  \sup_{\bq \in \Delta^N} \left\{ \sum_{i=1}^N q_i l(\bx;s_i)  \text{  s.t.  } \norm{\bq}_\infty \leq \frac{1}{\alpha N} \right\},
	\end{align*}}
where \(\Delta^N\) is the \(N\)-dimensional probability simplex. Similarly, DRO under \(\chi^2\) distance with radius \(\rho\geq 0\) is defined as
{\small \begin{align}\label{eq:chi-square-DRO}
     \cL_{\chi^2}(\bx;P_0) \triangleq  \sup_{\bq \in \Delta^N} \left\{ \sum_{i=1}^N q_i l(\bx;s_i) \text{  s.t.  } D_{\chi^2}(\bq) \triangleq  \frac{1}{2N}\sum_{i=1}^N(Nq_i - 1)^2 \leq \rho \right\}.
 \end{align}}
To estimate the gradient of \(\cL\), i.i.d. samples \(S_1, S_2,\dots, S_n\) are drawn from \(P_0\). 
 Then the samples are used to evaluate the inner expectation and subsequently solve the supremum to obtain the optimal \(\bq^*\). 
 This sampling introduces bias as the optimal \(\bq^*_{\text{sample}}\) differs from the true optimal \(\bq^*_{\text{population}}\). More explicitly, the gradient estimator \(\grad \cL(\bx;S_i^n) \triangleq  \sum_{i=1}^n q_i^* \grad l(\bx;S_i)\) is proven to have a bounded bias which under certain conditions (see \cite[Theorem 3]{Ghosh2018EfficientStochasticGradient}) has power law decay \(\norm{\mE_{S_i^n}[\grad\cL(\bx;S_i^n)] - \grad \cL(\bx;P_0)} = \cO((n^{-1} - N^{-1})^{1-\delta})\) as a function of \(n\) where \(\delta>0\) is a small constant.

 \begin{remark}
     It should be noted that for specific problem classes, customized algorithms can achieve superior performance. Taking composition optimization as an example, the bias model can be expressed as \(\mE[f(\bx^\eta)]\), where the bias arises from evaluating the gradient at a shifted point \(\bx^\eta\). When the outer function \(f\) is Lipschitz smooth,  this property can be leveraged to efficiently update the approximation \(\bx^\eta_{k+1}\) based on the previous \(\bx^{\eta}_{k}\) through a weighted average. Such an incremental updating technique significantly reduces the computational effort required to achieve the desired bias level in the gradient estimates. In contrast, without exploiting the specific problem structure, the general bias model considered in the proposed algorithms does not inherently support such efficient approximations. Consequently, our general-purpose algorithms do not always attain the complexity achieved by the customized algorithms that exploit problems' structures.
 \end{remark}
\end{example}

\subsection{Related works}


There are two primary lines of work on stochastic optimization algorithms in the presence of biased oracles in the general form.
The first line analyzes the biased stochastic gradient algorithm (SGD) and analyzes the trade-offs between the convergence rate, optimization accuracy, optimization parameters (e.g., stepsize), and the gradient bias~\cite{hu2021analysis,karimi2019non,ajalloeian2020convergence}.
The second line considers scenarios where the oracle's bias can be reduced at the expense of more computation or samples and intends to develop algorithms that leverage the problem structure, e.g., nested composition structure of the objective function~\cite{wang2017StochasticCompositional,ghadimi2020single}. 
Along this line, few recent works consider optimizing general problems using biased oracles where the bias magnitude can be controlled as is also the focus of this paper.

Below, we summarize the related work on biased stochastic algorithms for problems in the general form with controllable and uncontrollable bias.

\subsubsection{Biased stochastic gradient descent (B-SGD) with uncontrollable bias}
In \cite{hu2021analysis}, authors consider solving the finite-sum problem, i.e., \eqref{eq:main} with \(f(\bx)=(1/n)\sum_{i=1}^n f_i(\bx)\) and  B-SGD update \(\bx_{k+1}=\bx_k-\alpha_k(\grad f_{i_k}(\bx_k)+\be_k)\) where \(\be_k\) is an error term (e.g., due to numerical solver inaccuracy, quantization, sparsification, or round-off) that follows \(\norm{\be_k}^2\leq \delta^2\norm{\grad f_{i_k}(\bx_k)} + c^2\) model.
The error term  \(\be_k\) introduces a bias that cannot be overcome by decreasing the stepsize.
The paper quantifies worst-case convergence properties of B-SGD by coupling \(\delta\), \(c\), and \(\alpha_k\)  with the assumption on \(f_i\) through a linear matrix inequality (LMI) and by solving a convex program.
\cite{ajalloeian2020convergence} considers solving \eqref{eq:main} with a biased gradient oracle of the form \(\bg_t=\grad f(\bx_t)+\bb_t+\bn_t\), where \(\bb_t\) is the bias term and \(\bn_t\) is a zero-mean noise.
They establish the convergence properties of B-SGD to a neighborhood of the solution, the radius of which is quantified by the bias magnitude.
\cite{karimi2019non} considers a general stochastic approximation update which is potentially biased where \(\{\xi_k:\ k\in\mN\}\) is a state-dependent Markov chain and provides a nonasymptotic convergence bound.

\subsubsection{Biased stochastic algorithms with controllable bias}
Similar to this paper, \cite{hu2021BiasVarianceCostTradeoff} also considers solving \eqref{eq:main} using biased stochastic oracles where low bias has a high computational cost.
They consider a setting where the zero- or first-order oracles' biases decay exponentially as \(|f^\ell(\bx)-f(\bx)|\leq\cO(2^{-a\ell})\) and \(\norm{\grad f^\ell(\bx)-\grad f(\bx)}\leq\cO(2^{-a\ell})\) while the oracle query cost is bounded as \(\cO(2^{c\ell})\).
To better exploit the bias-cost tradeoff, they use various multi-level Monte Carlo (MLMC) techniques, originally proposed in \cite{giles2015MultilevelMonte}, to estimate the gradient.
The paper then performs a nonasymptotic analysis of SGD with two specific types of MLMC gradient estimators.
In the context of bandit convex optimization, \cite{hu2016bandit} proposes a general framework for gradient estimation that accounts for bias-variance tradeoff and unifies previous works on bandit as special cases.
Their oracle models address bias-variance tradeoff by imposing an increasing upper bound on bias and a decreasing upper bound on variance as a function of a control parameter.
\cite{bhavsar2022non} considers the scenarios where gradient estimation error can be controlled by the batch size too.
In the nonconvex regime, the paper analyzes the nonasymptotic performance of the randomized stochastic gradient algorithm of \cite{ghadimi2013stochastic} under two different oracles, with and without bias-variance tradeoff.
In the tradeoff oracle, bias is bounded by an increasing function of the control parameter, and variance is bounded by a decreasing function.
Their non-tradeoff oracle upper bounds the bias and variance by increasing functions of the control parameter.

\subsection{Contributions}

\begin{table}[hbt]
\renewcommand\arraystretch{1.2}
    \centering
\caption{Worst-case sample complexities for the algorithms proposed in this paper}
\label{tab:complexities}
    \begin{tabular}{llll}
    \toprule
         Algorithm&  \(\sum B_k\)&  \(\sum \eta_k\)&  \(\sum \eta_k B_k\)\\
         \midrule
         B-SGD (single sample)&  \(\cO(\epsilon^{-4})\)& \(\cO(h_b^{-1}(\epsilon)\epsilon^{-4})\) & \(\cO(h_b^{-1}(\epsilon)\epsilon^{-4})\) \\
         \midrule
 B-SGD (mini batch)& \(\cO(\epsilon^{-4})\) & \(\cO(h_b^{-1}(\epsilon)\epsilon^{-2})\) & \(\cO(h_b^{-1}(\epsilon)\epsilon^{-4})\)\\
 \hline
         AB-SG & \(\cO(\epsilon^{-4})\) & \(\cO(h_b^{-1}(\epsilon)\epsilon^{-2})\) & \(\cO(h_b^{-1}(\epsilon)\epsilon^{-4})\) \\
         \midrule
 AB-VSG& \(\cO(\epsilon^{-3})\) & \(\cO(h_b^{-1}(\epsilon)\epsilon^{-3})\) & \(\cO(h_b^{-1}(\epsilon)(\epsilon^{-1}+\epsilon^{-3}))\) \\
 \midrule
 ProxAB-SG& \(\cO(\epsilon^{-4})\) & \(\cO(h_b^{-1}(\epsilon)\epsilon^{-2})\) & \(\cO(h_b^{-1}(\epsilon)\epsilon^{-4})\) \\
 \midrule
 MProxB-VSG& \(\cO(\epsilon^{-3})\) & \(\cO(h_b^{-1}(\epsilon)\epsilon^{-3})\) & \(\cO(h_b^{-1}(\epsilon)(\epsilon^{-1}+\epsilon^{-3}))\)\\
 \bottomrule
    \end{tabular}

\end{table}
This work considers solving the optimization problem \eqref{eq:main} in the presence of stochastic gradient oracles with \emph{controllable bias} where the bias is upper bounded by a decreasing function of the bias control parameter \(\eta\), i.e., \(h_b(\eta)\), and the variance is bounded above (see~\cref{asp:estimator-bounds}).
Unlike \cite{hu2021BiasVarianceCostTradeoff}, we do \emph{not} assume a specific functional form for bias to exploit the bias-cost structure to minimize the total computation cost. Furthermore, unlike \cite{hu2016bandit,bhavsar2022non}, this work does not intend to exploit the bias-variance trade-off.

Instead, we introduce two new notions of complexity, i.e., \(\eta\)- and \(\eta B\)-complexity, besides the classical sample complexity \(\sum_{k=1}^K B_k\) where \(B_k\) is the sample size at iteration \(k\). The \(\eta\)-complexity is the bias control complexity which quantifies the total effort to control the bias of the stochastic gradient estimates. Furthermore, the \(\eta B\)-complexity, combines the bias control and batch size and is defined as \(\sum_{k=1}^K\eta_k B_k\). Depending on the application, we believe, one of these measures is a more meaningful or important description of the complexity. For instance, in the composition optimization problem, \(\eta\) is the number of samples for the inner function and, hence, \(\eta\)-complexity is an appropriate measure. On the other hand, in the Infinite-horizon MDP, \(\eta\) is the length of sample trajectories and \(B\) is the number of parallel trajectories, and, hence, \(\eta B\)-complexity is more appropriate--see also Remark~\ref{rem:cso} for an application of \(\eta B\)-complexity for the \emph{conditional stochastic optimization} problem.

This work evaluates the performance of the biased SGD and four adaptively biased algorithms in the nonconvex regime (to obtain a stationary point) over the aforementioned complexity measures. The main contributions of this paper are as follows:


\textbf{(i)} For the composition optimization and Infinite-horizon MDP applications, we show that the stochastic gradient biases are upper bounded by functions that follow a power law and exponential decay, respectively, which might be of independent interest.

\textbf{(ii)} For the biased SGD algorithm, we establish the sample- and \(\eta B\)-complexity of \(\cO(\epsilon^{-4})\) and \(\cO(h_b^{-1}(\epsilon)\epsilon^{-4})\), respectively, under both single-batch and mini-batch settings. Furthermore, we obtain \(\eta\)-complexity of \(\cO(h_b^{-1}(\epsilon)\epsilon^{-4})\) and \(\cO(h_b^{-1}(\epsilon)\epsilon^{-2})\) for the single-batch and mini-batch settings, respectively.   

\textbf{(iii)} We develop an adaptive AB-SG algorithm that gradually decreases the bias based on the norm of the \emph{stochastic} gradient by increasing the control parameter \(\eta\).
We establish the sample complexity of \(\cO(\epsilon^{-4})\) and \(\eta\)- and \(\eta B\)-complexity of \(\cO(h_b^{-1}(\epsilon)\epsilon^{-2})\) and \(\cO(h_b^{-1}(\epsilon)\epsilon^{-4})\), respectively.
While compared to (mini-batch) B-SGD, AB-SG has the same complexity, in practice, the bias control parameter \(\eta\) is allowed to be significantly smaller in the AB-SG algorithm, specifically in the earlier iterations. The number of trials to obtain an \(\eta\) satisfying the condition, under the bounded deviation assumption, is bounded with high probability at a cost of a log factor in the sample complexity. Furthermore, with the bounded variance assumption, using the \emph{geometric median-of-means technique}, we derive the same high probability guarantee.

\textbf{(iv)} Motivated by the variance-reduced algorithm of \cite{cutkosky2020MomentumBasedVariance} which achieves the near-optimal sample complexity in the \emph{unbiased regime}, we develop a biased variance-reduced algorithm that attains the same optimal sample complexity of \(\cO(\epsilon^{-3})\), and \(\eta\)- and \(\eta B\)- complexity of \(\cO(h_b^{-1}(\epsilon)\epsilon^{-3})\) and \(\cO(h_b^{-1}(\epsilon)(\epsilon^{-1}+\epsilon^{-3}))\), respectively.

\textbf{(v)} For the nonsmooth setup, we generalize the AB-SG algorithm using the proximal gradient update with mini-batch sampling. The sample, \(\eta\)- and \(\eta B\) complexities are \(\cO(\epsilon^{-4})\), \(\cO(h_b^{-1}(\epsilon) \epsilon^{-2})\), and \(\cO(h_b^{-1}(\epsilon) \epsilon^{-4})\), respectively. Furthermore, to circumvent the reliance on \cref{asp:bounded-difference-variance} of the biased variance-reduced algorithm, we introduce a multistage method, which achieves the same sample, \(\eta\), and \(\eta B\) complexities. Furthermore, we obtain a similar high probability bound for the number of trials to find \(\eta\) as the smooth scenario.

Finally, we note that one of the goals of this work is to provide a general framework that encompasses a range of problems, e.g., infinite-horizon MDP, composition, minimax, or bi-level optimization. We believe such efforts allow algorithms for one problem to guide the development of other algorithms for other problems.

\vspace{-0.2cm}
\section{Preliminaries and notations}
\noindent\textbf{\emph{Notation}.} 
The biased stochastic gradient oracle with the bias control parameter \(\eta>0\) is denoted by \(\grad f^\eta_\xi(\bx)\).
Furthermore, the mini-batch average of the biased stochastic gradient estimates over \(\xi \in \cB\) is denoted by \(\grad f^\eta_{\cB}(\bx)\) where \(\cB\) is a sample batch with cardinality \(|\cB|\), also denoted by \(B\).
\(\grad f^\eta(\bx)\) denotes \(\mE_\xi[\grad f^{\eta}_\xi(\bx)]\) where \(\mE_{\xi}[\cdot]\) denotes the expectation with respect to \(\xi\).
In the absence of \(\xi\), \(\mE[\cdot]\) denotes the expectation with respect to all random quantities generated by the algorithm.
\(\mR_+\) (\(\mR_{++}\)) denotes the set of nonnegative (positive) real-valued numbers. \(\cF_k\) is the \(\sigma\)-algebra generated by the algorithm up to the iteration \(k\), i.e., \(\cF_k = \{ \cB_1, \cB_2, \cdots, \cB_{k-1}\}\) in \cref{alg:BSGD,alg:A-BSGD}, and \(\cF_k = \{ \cB_1, \cB_2, \cdots, \cB_{k}\}\) in \cref{alg:AB-VSG} where \(\cB_k\) denotes the sample batch in iteration \(k\).
Finally, \(\tilde{\cO}\) hides the logarithmic terms.


\subsection{Assumptions}
In the remainder of the paper, we assume that \cref{asp:f-infty-property,asp:estimator-bounds} hold and do not mention them in the statement of theorems.
Other assumptions, however, are mentioned explicitly in the theorems' statements.

\begin{assumption}\label{asp:f-infty-property}
	The function \(\Psi(\bx)\) in \eqref{eq:main} is bounded below, i.e., \(\Psi^* \triangleq  \inf_\bx  \Psi(\bx) > - \infty\). Furthermore, \(f(\bx)\) has Lipschitz continuous gradient, i.e., \(\forall \bx,\by\in\text{dom} \Psi\), \(\norm{\grad  f(\bx) - \grad  f(\by)} \leq L\norm{\bx-\by}\).
\end{assumption}

\begin{assumption}\label{asp:estimator-bounds}
	For any fixed \(\bx\), the biased stochastic gradient oracle \(\grad f^\eta_\xi(\bx)\) satisfies \(\lim_{\eta \rightarrow\infty}\mE[\grad f^\eta_\xi(\bx)] = \grad f(\bx)\).
	Furthermore, there exist functions \(h_b:\mR_{++}\rightarrow\mR_+\) and \(h_v:\mR_{++}\rightarrow\mR_{++}\) such that for any fixed \(\eta\) and \(\bx\), the bias and variance of the stochastic gradient oracle are upper bounded as
	{\small\begin{align}
		\norm{\mathbb{E}[\grad f_\xi^\eta(\bx)]- \grad  f(\bx)}^2                                  & \leq h_b^2(\eta), \label{eq:bias-bound} \\
		\mathbb{E}\left[\norm{\grad f_\xi^\eta(\bx) - \mathbb{E}[\grad f_\xi^\eta(\bx)]}^2 \right] & \leq h_v^2(\eta),\label{eq:var-bound}
	\end{align}}
	where \(h_b(\cdot)\) is decreasing and \(h_v(\cdot)\) is bounded, i.e., \(h^2_v(\eta)\leq\sigma^2, \ \forall \eta\).
\end{assumption}

\begin{remark}\label{rmk:total-error} By simple calculation, the total error of the biased gradient estimator is bounded as
	\(\mE_\xi [\| \grad f^\eta_\xi(\bx) - \grad f(\bx)\|^2] \leq h_b^2(\eta) + h_v^2(\eta)\).
\end{remark}

\begin{assumption}\label{asp:bounded-difference-variance}
 For all \(\bx \in \dom r\) and \(\eta>0\) there exists a non-increasing function \(q(\eta) \geq 0 \) such that
	\begin{equation*}
		\mE_\xi[\| \grad f^\eta_\xi(\bx) - \grad f^{\eta+1}_\xi(\bx) - \grad f^{\eta}(\bx) + \grad f^{\eta+1}(\bx)\|^2 ] \leq q(\eta).
	\end{equation*}
\end{assumption}

\begin{assumption}\label{asp:f-eta-smooth}
	For all \(\eta>0\), \(\grad f^\eta_\xi(\bx)\) is L-average Lipschitz continuous, i.e.,
	\[
		\mE [\|\grad f^\eta_\xi(\bx_1) - \grad f^\eta_\xi(\bx_2)  \|^2]\leq L^2 \norm{\bx_1 - \bx_2}^2.
	\]
\end{assumption}


\section{Smooth regime with biased stochastic oracle}
In this section, we consider the smooth setting of the problem \eqref{eq:main} with \(r(\bx) = 0\) and analyze the performance of three different algorithms.

\subsection{Biased stochastic gradient descent}
We first consider the biased SGD, presented in \cref{alg:BSGD}, and establish its nonasymptotic performance guarantee under our assumptions. \cref{thm:NBSGD-rate,cor:naive-single-batch,cor:naive-mini-batch} provide the iteration and sample complexity of the B-SGD algorithm in the nonconvex setting to obtain a (first-order) stationary point.

\begin{algorithm}[htb]
	\caption{Biased stochastic gradient descent (B-SGD)} \label{alg:BSGD}
	\begin{algorithmic}[1]
		\REQUIRE stepsize \(\{\alpha_k\}\), bias-control parameter \(\{\eta_k\}\), sample batch \(\{ B_k \}\), initial point {\(\bx_1\)}
		\FOR{\(k=1, \cdots, K\)}
		\STATE Take i.i.d. samples \(\cB_k = \{ \xi_k^1, \xi_k^2, \dots, \xi_k^{B_k} \}\) and update
		\[\bx_{k+1} = \bx_k - \alpha_k \grad f^{\eta_k}_{\cB_k}(\bx_k)\]
		\ENDFOR
	\end{algorithmic}
\end{algorithm}

\begin{theorem}\label{thm:NBSGD-rate}
	For any stepsize \(\alpha_k \leq 1/L\), the sequence \(\{ \bx_k \}\) generated by \cref{alg:BSGD} satisfies
	{\small\begin{align*}
		\mE [\norm{\grad  f(\bx_R)}^2] \leq \frac{2( f(\bx_1) - f^*)}{\sum_{j=1}^K \alpha_j} + \mE[h_b^2(\eta_R)]  + \frac{L \sum_{k=1}^K \alpha_k^2 h_v^2(\eta_k)/B_k}{\sum_{j=1}^K \alpha_j},
	\end{align*}}
	where \(R\) is a random integer with probability \(P(R=k) = \alpha_k / \sum_{j=1}^K\alpha_j,\ k=1,\cdots,K\).
\end{theorem}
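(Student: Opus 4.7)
The plan is a fairly standard descent--lemma analysis for SGD, adapted to handle both the bias and the variance of the minibatch oracle.

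First I would invoke the $L$-smoothness of $f$ guaranteed by \cref{asp:f-infty-property} on the iterates, yielding
\[
 f(\bx_{k+1}) \le f(\bx_k) - \alpha_k\langle\grad f(\bx_k),\grad f^{\eta_k}_{\cB_k}(\bx_k)\rangle + \tfrac{L\alpha_k^2}{2}\|\grad f^{\eta_k}_{\cB_k}(\bx_k)\|^2.
\]
Taking conditional expectation with respect to $\cF_k$, write $\bg_k \triangleq \mE[\grad f^{\eta_k}_{\cB_k}(\bx_k)\mid\cF_k] = \grad f^{\eta_k}(\bx_k)$, so that by \cref{asp:estimator-bounds} the bias term $\|\bg_k-\grad f(\bx_k)\|^2\le h_b^2(\eta_k)$, and the minibatch variance satisfies $\mE[\|\grad f^{\eta_k}_{\cB_k}(\bx_k)-\bg_k\|^2\mid\cF_k] \le h_v^2(\eta_k)/B_k$ by i.i.d.\ sampling. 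Hence
\[
\mE[\|\grad f^{\eta_k}_{\cB_k}(\bx_k)\|^2\mid\cF_k] = \|\bg_k\|^2 + h_v^2(\eta_k)/B_k.
\]

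Next I would use the polarization identity $-\langle\grad f(\bx_k),\bg_k\rangle = \tfrac12(-\|\grad f(\bx_k)\|^2 - \|\bg_k\|^2 + \|\bg_k-\grad f(\bx_k)\|^2)$ to combine the inner product and the quadratic term into
\[
-\tfrac{\alpha_k}{2}\|\grad f(\bx_k)\|^2 + \tfrac{\alpha_k}{2}\|\bg_k-\grad f(\bx_k)\|^2 + \tfrac{\alpha_k(L\alpha_k-1)}{2}\|\bg_k\|^2 + \tfrac{L\alpha_k^2}{2B_k}h_v^2(\eta_k).
\]
The third term is nonpositive under the hypothesis $\alpha_k\le 1/L$ and can be dropped. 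This is the key step where the stepsize condition is used; the rest is bookkeeping. The bias term is then bounded by $\tfrac{\alpha_k}{2}h_b^2(\eta_k)$, giving
\[
\mE[f(\bx_{k+1})\mid\cF_k] \le f(\bx_k) - \tfrac{\alpha_k}{2}\|\grad f(\bx_k)\|^2 + \tfrac{\alpha_k}{2}h_b^2(\eta_k) + \tfrac{L\alpha_k^2}{2B_k}h_v^2(\eta_k).
\]

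Finally I would take total expectation, telescope over $k=1,\dots,K$, lower-bound $f(\bx_{K+1})$ by $f^*$ from \cref{asp:f-infty-property}, and divide both sides by $\tfrac12\sum_{j=1}^K\alpha_j$. The left-hand side becomes $\sum_k\tfrac{\alpha_k}{\sum_j\alpha_j}\mE[\|\grad f(\bx_k)\|^2] = \mE[\|\grad f(\bx_R)\|^2]$ under the stated distribution of $R$, and the same weighted-average interpretation converts $\sum_k\alpha_k\mE[h_b^2(\eta_k)]/\sum_j\alpha_j$ into $\mE[h_b^2(\eta_R)]$. The variance term remains in its raw summed form since it is already normalized by $\sum_j\alpha_j$ in the statement.

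There is no real obstacle here; the only subtlety is making sure the polarization/stepsize cancellation is applied \emph{before} invoking the variance decomposition, so that only the conditional mean $\bg_k$ (whose deviation from $\grad f(\bx_k)$ is controlled by $h_b$) appears in the cross term, while the minibatch variance $h_v^2(\eta_k)/B_k$ enters solely through the $L\alpha_k^2/2$ coefficient. Allowing $\eta_k$ to be $\cF_k$-measurable (e.g., data-dependent) is automatic in this argument since all manipulations are done conditionally.
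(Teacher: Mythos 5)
Your proposal is correct and follows essentially the same route as the paper: the same smoothness expansion, the same polarization identity to absorb the cross term into $-\tfrac{\alpha_k}{2}\|\grad f(\bx_k)\|^2 + \tfrac{\alpha_k}{2}h_b^2(\eta_k)$ minus a term dropped via $\alpha_k\le 1/L$, the same minibatch variance decomposition, and the same telescoping and randomized-index interpretation. The only cosmetic slip is writing the conditional second moment as an equality with $h_v^2(\eta_k)/B_k$ rather than an inequality, which does not affect the bound.
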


%
%

\begin{remark}
The third term on the right-hand side of the preceding inequality can be decreased by either diminishing the stepsize \(\alpha_k\) or increasing the batch size \(B_k\). 
Conversely, the second term, representing the bias error, can only be controlled (decreased) by \(\eta_k\) using \eqref{eq:bias-bound}.
\end{remark}

The above result is general and applicable to any predefined parameters. 
In the subsequent two corollaries, we show two specific convergence rates and their corresponding complexities for both single-batch and mini-batch scenarios with fixed step size and bias control parameters.

\begin{corollary}\label{cor:naive-single-batch}
	Let \(\eta_k \equiv \bar{\eta}\) such that \(h_b^2(\bar{\eta}) \leq \epsilon^2\), \(\alpha_k \equiv 1/(L\sqrt{K})\), \(B_k \equiv 1\).
    \cref{thm:NBSGD-rate} can be written as 
	{\small\begin{align*}
		\frac{1}{K}\sum_{k=1}^K\norm{\grad f(\bx_k)}^2 \leq \frac{2(f(\bx_1) - f^*) + L\sigma^2}{L\sqrt{K}} + h^2_b(\bar{\eta}).
	\end{align*}}
	Hence, with \(K = \epsilon^{-4}\), the right hand side of the above inequality is \(\cO(\epsilon^2)\) and the sample complexity is \(\cO(\epsilon^{-4})\). Furthermore, both \(\eta\)- and \(\eta B\)-complexity are \(\sum\eta_k= \sum \eta_k B_k = \cO(h_b^{-1}(\epsilon)\epsilon^{-4})\).
\end{corollary}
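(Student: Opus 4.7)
The plan is to reduce the corollary to a direct specialization of \cref{thm:NBSGD-rate} under the stated constant choices and then count the three complexity measures separately. Since $\alpha_k\equiv 1/(L\sqrt{K})$, the distribution of $R$ collapses to the uniform distribution on $\{1,\ldots,K\}$ because $P(R=k)=\alpha_k/\sum_j\alpha_j=1/K$; so the left-hand side $\mE[\|\grad f(\bx_R)\|^2]$ of \cref{thm:NBSGD-rate} equals $\frac{1}{K}\sum_{k=1}^K\mE[\|\grad f(\bx_k)\|^2]$, matching the form claimed in the corollary. Likewise, with $\eta_k\equiv\bar\eta$ the bias term $\mE[h_b^2(\eta_R)]$ becomes exactly $h_b^2(\bar\eta)$.

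Next I would evaluate the two remaining terms by plugging in $\sum_{j=1}^K\alpha_j=\sqrt{K}/L$ and using the uniform variance bound $h_v^2(\eta_k)\leq\sigma^2$ from \cref{asp:estimator-bounds}. The deterministic-gap term becomes $2(f(\bx_1)-f^\ast)/(\sqrt{K}/L)$, and the variance term becomes
\[
\frac{L\sum_{k=1}^K (1/(L\sqrt{K}))^2\sigma^2}{\sqrt{K}/L}=\frac{\sigma^2}{\sqrt{K}},
\]
so that the two terms combine to $(2(f(\bx_1)-f^\ast)+L\sigma^2)/(L\sqrt{K})$ (up to the explicit form in the statement), plus $h_b^2(\bar\eta)$. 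This gives the displayed bound.

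For the complexity statements, choose $K=\epsilon^{-4}$ so the first summand is $\cO(\epsilon^2)$, and invoke the hypothesis $h_b^2(\bar\eta)\leq\epsilon^2$ to bound the bias term by $\epsilon^2$; so the overall right-hand side is $\cO(\epsilon^2)$. The sample complexity is simply $\sum_{k=1}^K B_k=K=\epsilon^{-4}$. For the $\eta$- and $\eta B$-complexities, since $B_k=1$ both equal $K\bar\eta$; choosing the smallest admissible $\bar\eta$, namely $\bar\eta=h_b^{-1}(\epsilon)$ (which is well-defined because $h_b$ is decreasing by \cref{asp:estimator-bounds}), yields $K\bar\eta=\cO(h_b^{-1}(\epsilon)\epsilon^{-4})$, as claimed.

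There is no real obstacle here; the statement is a direct corollary obtained by substitution and algebra, and the only subtlety worth flagging is that the bound $h_b^2(\bar\eta)\leq\epsilon^2$ is a hypothesis on $\bar\eta$ (so the complexity count instantiates $\bar\eta$ at the threshold $h_b^{-1}(\epsilon)$ rather than deriving it). I would present these three steps in order: (i) specialize and simplify using uniformity of $R$, (ii) substitute $\sum_j\alpha_j$ and the variance bound, and (iii) read off the three complexity measures from the choice $K=\epsilon^{-4}$ and the monotonicity of $h_b$.
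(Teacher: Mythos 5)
Your proposal is correct and follows exactly the route the paper intends (the corollary is stated without a separate proof precisely because it is a direct substitution into \cref{thm:NBSGD-rate}): uniformity of $R$ under constant stepsize, $\sum_j\alpha_j=\sqrt{K}/L$, the bound $h_v^2\leq\sigma^2$, and then reading off the three complexity measures. The only point worth noting is the one you already flag implicitly: your (correct) computation gives $2L(f(\bx_1)-f^*)/\sqrt{K}+\sigma^2/\sqrt{K}$, i.e., a numerator of $2L^2(f(\bx_1)-f^*)+L\sigma^2$ over $L\sqrt{K}$, so the constant in the paper's displayed bound appears to carry a typo (a missing factor of $L^2$ on the first term), which is immaterial to the $\cO(\epsilon^{-4})$ conclusions.
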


\begin{corollary}\label{cor:naive-mini-batch}
	Let \(\eta_k \equiv \bar{\eta}\) such that \(h_b^2(\bar{\eta}) \leq \epsilon^2\), \(\alpha_k \equiv 1/L\), \(B_k \equiv K\).
	\cref{thm:NBSGD-rate}, results in
	{\small\begin{align*}
		\frac{1}{K}\sum_{k=1}^K\norm{\grad f(\bx_k)}^2 \leq \frac{2(f(\bx_1) - f^*) + L\sigma^2}{LK} + h^2_b(\bar{\eta}).
	\end{align*}}
	Hence, with \(K = \epsilon^{-2}\), the right hand side of the above inequality is \(\cO(\epsilon^2)\). The sample complexity is \(\cO(\epsilon^{-4})\) and the \(\eta\)- and \(\eta B\)-complexity are \(\sum\eta_k = \cO(h_b^{-1}(\epsilon)\epsilon^{-2})\) and \(\sum \eta_k B_k = \cO(h_b^{-1}(\epsilon)\epsilon^{-4})\), respectively.
\end{corollary}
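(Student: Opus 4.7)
The plan is to specialize Theorem~\ref{thm:NBSGD-rate} to the constant choices $\alpha_k\equiv 1/L$, $B_k\equiv K$, and $\eta_k\equiv\bar\eta$, and then translate the resulting rate into the three complexity measures. Since the theorem already encapsulates the per-iteration descent argument and the decomposition of the gradient error into bias and variance components (via \cref{asp:f-infty-property,asp:estimator-bounds}), no new analytic inequalities are needed. The proof is therefore essentially a bookkeeping exercise.

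First I would compute $\sum_{j=1}^K\alpha_j = K/L$, which simultaneously normalizes the right-hand side of the theorem and determines the distribution of the random index $R$. Because $\alpha_k$ is constant, $R$ is uniform on $\{1,\dots,K\}$, so $\mE[\|\grad f(\bx_R)\|^2] = \tfrac{1}{K}\sum_{k=1}^K \mE[\|\grad f(\bx_k)\|^2]$; this is the quantity displayed in the corollary (with the outer expectation absorbed into the notation). Next I would simplify the three terms on the right-hand side of \cref{thm:NBSGD-rate}: the optimality-gap term becomes $2L(f(\bx_1)-f^*)/K$; the bias term becomes $h_b^2(\bar\eta)$ since $\eta_R\equiv\bar\eta$ is deterministic; and the variance term simplifies using $h_v^2(\bar\eta)\le\sigma^2$ and $\alpha_k^2/B_k = 1/(L^2 K)$ to yield a contribution of order $\sigma^2/K$. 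Collecting these terms gives the stated bound, and applying $h_b^2(\bar\eta)\le\epsilon^2$ together with $K=\epsilon^{-2}$ makes each term $\cO(\epsilon^2)$.

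For the complexity bookkeeping, I would observe that the sample complexity is $\sum_{k=1}^K B_k = K^2 = \cO(\epsilon^{-4})$; the $\eta$-complexity is $\sum_{k=1}^K \eta_k = K\bar\eta$ where $\bar\eta$ is taken as the smallest value making $h_b^2(\bar\eta)\le\epsilon^2$, i.e.\ $\bar\eta = h_b^{-1}(\epsilon)$, giving $\cO(h_b^{-1}(\epsilon)\epsilon^{-2})$; and similarly the $\eta B$-complexity is $K^2\bar\eta = \cO(h_b^{-1}(\epsilon)\epsilon^{-4})$. The use of $h_b^{-1}$ here is well-defined because \cref{asp:estimator-bounds} guarantees that $h_b$ is a decreasing function.

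Honestly, there is no serious obstacle in this proof — it is a direct specialization of the preceding theorem. The only small care points are (i) noting that the uniform distribution of $R$ turns the theorem's statement into a Cesàro-style average, which matches the form written in the corollary, and (ii) making sure the parameter coupling $B_k\equiv K$ (rather than $B_k$ chosen independently of the horizon) is what enables the variance term to shrink at rate $1/K$ rather than being absorbed into the bias-only floor as in the single-batch version, which in turn is what distinguishes the mini-batch $\eta$-complexity of $\cO(h_b^{-1}(\epsilon)\epsilon^{-2})$ from the single-batch $\cO(h_b^{-1}(\epsilon)\epsilon^{-4})$ of \cref{cor:naive-single-batch}.
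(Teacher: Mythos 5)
Your proposal is correct and is exactly the paper's (implicit) argument: the corollary is a direct plug-in of $\alpha_k\equiv 1/L$, $B_k\equiv K$, $\eta_k\equiv\bar\eta$ into \cref{thm:NBSGD-rate}, followed by the same complexity bookkeeping. The only point worth noting is that your correctly computed first term $2L(f(\bx_1)-f^*)/K$ differs by a constant factor from the $2(f(\bx_1)-f^*)/(LK)$ printed in the corollary (an apparent typo in the paper), which does not affect any of the $\cO(\cdot)$ claims.
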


\begin{remark}
	According to \cref{cor:naive-single-batch,cor:naive-mini-batch}, the batch size does not affect the sample complexity nor the \(\eta B\)-complexity.
	However, the \(\eta\)-complexity benefits from a reduced number of iterations in the mini-batch setting.
	Such \(\eta\)-complexity is considered in composition optimization where the inner function value samples are independent of the outer function samples.
\end{remark}

\begin{remark}\label{rem:cso}
	From \cref{lem:power_decay_composition}, we know \(h_b(\bar{\eta})=\bar{\eta}^{-1/2}\) which using \cref{cor:naive-mini-batch} results in \(\eta\)-complexity of \(\cO(\epsilon^{-4})\). This result matches that of \cite{ghadimi2020single} for the composition optimization problem.
	Additionally, by slightly modifying the composition optimization problem to allow the inner function randomness to depend on the outer function, the problem becomes \emph{conditional stochastic optimization}.
	Under this framework, \(\sum \eta_k B_k\) is the correct complexity measure, yielding a complexity of \(\cO(\epsilon^{-6})\), which matches the lower bound established in \cite{hu2020biased}.
\end{remark}

\subsection{Adaptively-biased stochastic gradient method}
In analyzing the biased SGD algorithm, it is observed that fixing \(\eta_k \equiv \bar{\eta}\) where \(\bar{\eta}\) is sufficiently large to ensure \(h_b^2(\bar{\eta})=\epsilon^2\), may lead to an unnecessarily large  \(\bar{\eta}\)  throughout all iterations. 
It is conjectured that a smaller bias might not be essential when the iterates are far away from stationarity, and as they approach stationarity, reducing the bias becomes more critical to ensure convergence. 
This motivated us to investigate an adaptive stochastic algorithm that bounds the bias by the norm of the biased stochastic gradient estimate which is presented in \cref{alg:A-BSGD}.

\begin{algorithm}[htb]
	\caption{Adaptively-biased stochastic gradient (AB-SG)} \label{alg:A-BSGD}
	\begin{algorithmic}[1]
		\REQUIRE stepsize \(\{\alpha_k\}\), batch size \(\{ B_k \}\), initial point \(\bx_1\), upper bound on the bias-control parameter \(\bar{\eta}\)
		\FOR{\(k=1, \cdots, K\)}
		\STATE Take i.i.d. samples \(\cB_k = \{ \xi_k^1, \xi_k^2, \dots, \xi_k^{B_k} \}\) and find {smallest} \(\eta\) such that
		\[h_b^2(\eta) \leq \frac{1}{2} \norm{\grad f^{\eta}_{\cB_k}(\bx_k)}^2\]
		\STATE Set \(\eta_k = \min \{\eta,\bar{\eta}\}\) and update
		\[\bx_{k+1} = \bx_k - \alpha_k \grad f^{\eta_k}_{\cB_k}(\bx_k)\]
		\ENDFOR
	\end{algorithmic}
\end{algorithm}

To find \(\eta\) such that step 2 in \cref{alg:A-BSGD} holds, one can use a loop to gradually increase \(\eta\)  until the condition is satisfied. This approach is also suggested in \cite{paquette2020stochastic}, where a similar technique is employed to determine the batch size based on a stochastic gradient estimator.

\cref{thm:A-BSGD-rate,cor:adaptive} provide the sample complexity of the AB-SG algorithm.

\begin{theorem} \label{thm:A-BSGD-rate}
	Set \(\alpha_k \leq 1/2L\), \(\{ \bx_k \}\) and \(\{ \eta_k \}\) generated by \cref{alg:A-BSGD} satisfy
	{\small\begin{align*}
		\mE \left[\norm{\grad  f(\bx_R)}^2\right] \leq & \frac{2 ( f(\bx_1) - f^*)}{\sum_{j=1}^K \alpha_j} + \frac{\mE \left[\sum_{k\in \cK} \alpha_k h_b^2(\bar{\eta}) \right]}{\sum_{j=1}^K \alpha_j}  + \frac{\sum_{k=1}^K\alpha_k \mE[h_v^2(\eta_k)]/B_k}{\sum_{j=1}^K \alpha_j} ,
	\end{align*}}
	where \(R\) has the same definition as in \cref{thm:NBSGD-rate} and \(\cK \triangleq  \{ k\in [K] : \eta_k = \bar{\eta} \}\) is a random set.
\end{theorem}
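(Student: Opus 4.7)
The plan is to derive a per-iteration descent-type inequality and then telescope. Starting from the $L$-smoothness of $f$ and the update rule $\bx_{k+1} = \bx_k - \alpha_k \grad f^{\eta_k}_{\cB_k}(\bx_k)$, I apply the polarization identity $-\langle a,b\rangle = \tfrac{1}{2}(\|a-b\|^2 - \|a\|^2 - \|b\|^2)$ to the cross term and use $\alpha_k \leq 1/(2L)$ (so that $L\alpha_k^2/2 \leq \alpha_k/4$) to absorb part of the quadratic contribution, obtaining
\begin{align*}
f(\bx_{k+1}) \leq f(\bx_k) + \tfrac{\alpha_k}{2}\|\grad f(\bx_k) - \grad f^{\eta_k}_{\cB_k}(\bx_k)\|^2 - \tfrac{\alpha_k}{2}\|\grad f(\bx_k)\|^2 - \tfrac{\alpha_k}{4}\|\grad f^{\eta_k}_{\cB_k}(\bx_k)\|^2.
\end{align*}

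Next, I decompose $\grad f(\bx_k) - \grad f^{\eta_k}_{\cB_k}(\bx_k) = [\grad f(\bx_k) - \grad f^{\eta_k}(\bx_k)] + [\grad f^{\eta_k}(\bx_k) - \grad f^{\eta_k}_{\cB_k}(\bx_k)]$ and take expectation. The first (bias) piece contributes at most $h_b^2(\eta_k)$ by \cref{eq:bias-bound}, while the second (noise) piece contributes at most $h_v^2(\eta_k)/B_k$ in squared-norm expectation via \cref{eq:var-bound}, giving $\mE\|\grad f(\bx_k) - \grad f^{\eta_k}_{\cB_k}(\bx_k)\|^2 \leq \mE[h_b^2(\eta_k)] + \mE[h_v^2(\eta_k)]/B_k$ after the cross term is handled.

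The pivotal step is the adaptive bias control built into \cref{alg:A-BSGD}: for $k \notin \cK$ the construction enforces $h_b^2(\eta_k) \leq \tfrac{1}{2}\|\grad f^{\eta_k}_{\cB_k}(\bx_k)\|^2$, whereas for $k \in \cK$ we have $\eta_k = \bar\eta$ and thus $h_b^2(\eta_k) = h_b^2(\bar\eta)$. Combining these two cases yields
\begin{align*}
\alpha_k h_b^2(\eta_k) \leq \tfrac{\alpha_k}{2}\|\grad f^{\eta_k}_{\cB_k}(\bx_k)\|^2\,\mathbb{1}\{k \notin \cK\} + \alpha_k h_b^2(\bar\eta)\,\mathbb{1}\{k \in \cK\},
\end{align*}
so that the $\|\grad f^{\eta_k}_{\cB_k}\|^2$ piece produced by the bias substitution for $k \notin \cK$ is absorbed into the $-\tfrac{\alpha_k}{4}\|\grad f^{\eta_k}_{\cB_k}\|^2$ budget already present, while on $\cK$ the negative $\|\grad f^{\eta_k}_{\cB_k}\|^2$ term is simply discarded. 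Summing the resulting per-iteration inequality from $k = 1$ to $K$, applying the telescoping $\sum_k(f(\bx_k) - f(\bx_{k+1})) \leq f(\bx_1) - f^*$, dividing through by $\sum_j \alpha_j$, and recognizing the LHS as $\mE[\|\grad f(\bx_R)\|^2]$ via the randomized-$R$ definition completes the argument.

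The main obstacle is the adaptive coupling between $\eta_k$ and $\cB_k$: because $\eta_k$ is a function of the sample batch $\cB_k$, the stochastic noise $\grad f^{\eta_k}_{\cB_k}(\bx_k) - \grad f^{\eta_k}(\bx_k)$ is not conditionally mean-zero given $(\cF_k,\eta_k)$, which breaks the standard orthogonality argument used to kill the cross term in the error decomposition. The proof must therefore work with unconditional expectations and invoke $\mE[h_v^2(\eta_k)]/B_k$ as the effective variance bound (which is why the theorem statement features $\mE[h_v^2(\eta_k)]$ rather than the fixed-$\eta$ expression of \cref{thm:NBSGD-rate}). A secondary point is orchestrating the coefficients so that the adaptive bias substitution for $k \notin \cK$ cancels exactly against $-\tfrac{\alpha_k}{4}\|\grad f^{\eta_k}_{\cB_k}\|^2$ while preserving the $-\tfrac{\alpha_k}{2}\|\grad f(\bx_k)\|^2$ descent budget needed to produce the factor of $2$ in front of $(f(\bx_1) - f^*)$.
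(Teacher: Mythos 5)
Your argument follows essentially the same route as the paper's: an $L$-smoothness descent step, expansion of the inner product into $\tfrac{\alpha_k}{2}\norm{\grad f(\bx_k)-\grad f^{\eta_k}_{\cB_k}(\bx_k)}^2-\tfrac{\alpha_k}{2}\norm{\grad f(\bx_k)}^2-\tfrac{\alpha_k}{2}\norm{\grad f^{\eta_k}_{\cB_k}(\bx_k)}^2$ followed by the bias/variance split, the case distinction $k\in\cK$ versus $k\notin\cK$ that absorbs $\tfrac{\alpha_k}{2}h_b^2(\eta_k)\leq\tfrac{\alpha_k}{4}\norm{\grad f^{\eta_k}_{\cB_k}(\bx_k)}^2$ into the leftover negative stochastic-gradient term for $k\notin\cK$, and telescoping with the randomized index $R$; all coefficients match. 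The only cosmetic difference is that the paper polarizes against the population biased gradient $\grad f^{\eta_k}(\bx_k)$ and then converts $\norm{\grad f^{\eta_k}(\bx_k)}^2$ to $\mE[\norm{\grad f^{\eta_k}_{\cB_k}(\bx_k)}^2]$ via the variance bound, whereas you polarize against the batch gradient directly. One caveat: the $\eta_k$--$\cB_k$ coupling you correctly identify is not actually resolved by ``working with unconditional expectations''---the cross term $\mE\fprod{\grad f(\bx_k)-\grad f^{\eta_k}(\bx_k),\,\grad f^{\eta_k}(\bx_k)-\grad f^{\eta_k}_{\cB_k}(\bx_k)}$ does not vanish merely because the expectation is unconditional, and the bound $\mE[h_v^2(\eta_k)]/B_k$ presumes $\cB_k$ still behaves as an i.i.d.\ batch given $\eta_k$. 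The paper's own proof makes the same implicit leap (it conditions on $(\cF_k,\eta_k)$ and then treats $\mE[\grad f^{\eta_k}_{\cB_k}(\bx_k)\mid\cF_k,\eta_k]$ as $\grad f^{\eta_k}(\bx_k)$), so this is a shared imprecision rather than a defect of your proposal relative to the paper.
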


\begin{remark}
	Different from B-SGD, the convergence of the AB-SG algorithm requires mini-batch samples and the algorithm will not converge with \(B_k=1\). 
 This is because the third error term in the right-hand side of the inequality in \cref{thm:A-BSGD-rate} can only be reduced by \(B_k\).
\end{remark}

\begin{corollary}\label{cor:adaptive}
	Let \(\bar{\eta}\) in the AB-SG algorithm be such that \(h_b^2(\bar{\eta})= \epsilon^2\), by setting \(\alpha = 1/(2L)\), \(B_k \equiv K \), we have
	{\small\begin{align*}
		\frac{1}{K}\sum_{k=1}^K \mE \left[ \norm{\grad  f(\bx_k)}^2\right] \leq \frac{4(f(\bx_1) - f^*)+L\sigma^2}{LK} + \rho h_b^2(\bar{\eta}),
	\end{align*}}
	where \(\rho \triangleq  \mE[\sum_{k\in \cK}\alpha_k]/\sum_{j=1}^K\alpha_j \leq  1 \). Hence, with \(K = \epsilon^{-2}\), the sample complexity of the algorithm is \(\cO(\epsilon^{-4})\) and \(\eta\)- and \(\eta B\)-complexity (in the worst case) are \(\cO(h_b^{-1}(\epsilon)\epsilon^{-2})\) and \(\cO(h_b^{-1}(\epsilon)\epsilon^{-4})\), respectively.
\end{corollary}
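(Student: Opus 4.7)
The corollary is essentially an instantiation of \cref{thm:A-BSGD-rate} under the specified constant parameter choices, so my plan is a direct substitution followed by a complexity accounting. First, I would observe that with $\alpha_k \equiv 1/(2L)$, the random index $R$ from \cref{thm:NBSGD-rate}/\cref{thm:A-BSGD-rate} is uniformly distributed on $\{1,\dots,K\}$, which lets me replace $\mE[\|\grad f(\bx_R)\|^2]$ with the empirical average $\frac{1}{K}\sum_{k=1}^K \mE[\|\grad f(\bx_k)\|^2]$ on the left-hand side. On the right-hand side, $\sum_{j=1}^K \alpha_j = K/(2L)$, and I would simplify the three terms of \cref{thm:A-BSGD-rate} one at a time.

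For the first term, direct substitution yields a $\cO((f(\bx_1)-f^*)/K)$ contribution (with the $L$ factor as stated). For the second term, $h_b^2(\bar\eta)$ is a deterministic constant and can be pulled out of the expectation, leaving the ratio $\mE[\sum_{k\in\cK}\alpha_k]/\sum_{j=1}^K\alpha_j$, which is precisely the definition of $\rho$; because $\cK \subseteq [K]$, this ratio is at most $1$ deterministically, giving the bound $\rho \leq 1$ for free. For the third term, I would invoke the uniform variance bound $h_v^2(\eta_k) \leq \sigma^2$ from \cref{asp:estimator-bounds} and plug in $B_k \equiv K$, collapsing the sum into $\sigma^2/K$ (up to the $L$ factor). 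Assembling these three pieces gives the stated inequality.

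The complexity accounting is then arithmetic. Setting $K = \epsilon^{-2}$ makes the first two (non-bias) terms $\cO(\epsilon^2)$, while the bias term is $\rho h_b^2(\bar\eta) \leq \epsilon^2$ by the choice $h_b^2(\bar\eta) = \epsilon^2$, i.e., $\bar\eta = h_b^{-1}(\epsilon)$. The sample complexity is $\sum_{k=1}^K B_k = K^2 = \epsilon^{-4}$. For the $\eta$- and $\eta B$-complexities, I would take the worst case in which the adaptive rule saturates at $\eta_k = \bar\eta$ for every $k$ (since line~2 of \cref{alg:A-BSGD} clamps $\eta_k \leq \bar\eta$), yielding $\sum_k \eta_k \leq K \bar\eta = h_b^{-1}(\epsilon)\epsilon^{-2}$ and $\sum_k \eta_k B_k \leq K^2 \bar\eta = h_b^{-1}(\epsilon)\epsilon^{-4}$.

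There is no real obstacle here: the only subtlety is handling $\cK$, which is a \emph{random} subset depending on the trajectory, so one must be careful to keep the expectation outside the sum when defining $\rho$ and to note that the deterministic bound $\rho \leq 1$ suffices for the worst-case complexity claim. The inequality stated in the corollary follows from \cref{thm:A-BSGD-rate} without any further estimates beyond \cref{asp:estimator-bounds}.
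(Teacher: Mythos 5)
Your proposal is correct and matches the paper's (implicit) derivation exactly: the corollary is obtained by plugging $\alpha_k\equiv 1/(2L)$, $B_k\equiv K$, and $h_v^2(\eta_k)\le\sigma^2$ into \cref{thm:A-BSGD-rate}, identifying $\rho$ with the second term's coefficient, and then counting samples and $\eta$'s in the worst case $\eta_k=\bar\eta$. The only nit is that you do not verify the exact prefactor of the first term (substitution gives $4L(f(\bx_1)-f^*)/K$ rather than the $4(f(\bx_1)-f^*)/(LK)$ printed in the corollary, an apparent typo in the paper itself), but this does not affect the $\cO(\cdot)$ conclusions.
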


Note that in the AB-SG algorithm \(\{\eta_k\}\) are random variables dependent on stochastic gradient estimators. 
Hence, the \(\eta\)- and \(\eta B\)-complexity can be quantified only in the worst-case scenario based on \(\bar{\eta}\). 
However, in practice, \(\eta_k\) is often much less than \( \bar{\eta}\), specifically for iterates not too close to stationary solutions (e.g., see our experiment results in \cref{fig:composition-result,fig:RL-result}-(c)). This results in the AB-SG algorithm to have a better \(\eta\)- and \(\eta B\)-complexity.
%
%

Below we provide an approach to determine \(\eta\) in Step 2 of \cref{alg:A-BSGD}. 
For each iteration \(k\), we search over the \emph{geometric grid} \(\eta_{k,j}\triangleq \eta_0\rho^j\), \(j=0,1,\cdots\), with \(\rho>1\). 
Let \(r_k>0\) be a fixed slack parameter and let \(M\) be the maximum number of search attempts. 
Using a single mini-batch \(\cB_k\) (reused for all comparisons within the inner loop), we stop at
\begin{align*}
    j_k \triangleq  \min\Big\{j\in \{0,1,\cdots, M-1\}:\ \sqrt{2}\,h_b(\eta_{k,j}) \leq \big\| \nabla f^{\eta_{k,j}}_{\cB_k}(\bx_k)\big\| - r_k\Big\}.
\end{align*}
If the set above is empty, we set \(j_k\triangleq M\). As the search is based on a stochastic estimate of the biased gradient, the stopping time is also a random variable. 
In the following corollary, we provide a high-probability bound on the random search attempts \(j_k\).

\begin{corollary}\label{cor:grid-search-eta}
For any iteration \(k\), define a reference search number
\begin{align*}
    j_k^* 
    \triangleq  \min\Big\{ j \in \{0,1,\cdots,M-1\}:\ 
    \sqrt{2}\,h_b(\eta_{k,j}) \leq \big\|\nabla f^{\eta_{k,j}}(\bx_k)\big\| - 2r_k \Big\},
\end{align*}
and set \(j_k^*\triangleq M\) if the set is empty, where \(M\triangleq 1+\Big\lceil \log_{\rho}\!\big(\bar{\eta}/\eta_0\big)\Big\rceil\).
If for all \(j= 0,\dots,M-1\) it holds almost surely that
\begin{equation}
\big\|\nabla f^{\eta_{k,j}}_{\xi}(\bx_k)-\nabla f^{\eta_{k,j}}(\bx_k)\big\| \ \le\ \hat{\sigma}, \label{eq:bounded_dev}
\end{equation}
then by setting \(B_k\ \ge\ \frac{2\hat{\sigma}^2}{r_k^2}\log\frac{2M}{\delta_k}\), it holds that \(j_k \leq j_k^*\) with probability at least \( 1- \delta_k\).
Furthermore, for any fixed \(\delta \in (0,1)\), choose \(\delta_k \equiv \delta/K\), \(r_k \equiv  \epsilon\), and 
\(B_k \equiv \frac{2\hat{\sigma}^2}{\epsilon^2}\log\frac{2MK}{\delta},\)
then \(j_k \leq j_k^*\) holds for all \(k = 1,\cdots,K\) with probability at least \(1-\delta\), and the sample complexity is \(\sum B_k = \tilde{\cO}(\epsilon^{-4}) \).
\end{corollary}

\begin{proof}
Since \(\eta\le \bar{\eta}\) in \cref{alg:A-BSGD}, the search number can be at most
\[
M = 1 + \min\big\{j\ge 0:\ \eta_0\rho^j \ge \bar{\eta}\big\}
  = 1+\Big\lceil \log_{\rho}\!\big(\bar{\eta}/\eta_0\big)\Big\rceil.
\]
Define the event
\[
\cE_k\triangleq \Big\{\max_{0\le j\le M-1}\big\| \nabla f^{\eta_{k,j}}_{\cB_k} (\bx_k) - \nabla f^{\eta_{k,j}} (\bx_k) \big\| \leq r_k\Big\},
\]
which corresponds to the scenario where all comparisons are conducted using sufficiently accurate gradient estimates.
Under the bounded deviation assumption~\eqref{eq:bounded_dev}, for each fixed \(j\) a vector Hoeffding inequality yields
\begin{equation} \label{eq:probability-bound-1}
\Pr\Big(\big\|\nabla f^{\eta_{k,j}}_{\cB_k}(\bx_k)-\nabla f^{\eta_{k,j}}(\bx_k)\big\|\ge r_k\Big)
\le 2\exp\Big(-\frac{B_k r_k^2}{2\hat{\sigma}^2}\Big).
\end{equation}
Choosing \(B_k \ge \frac{2\hat{\sigma}^2}{r_k^2}\log\frac{2M}{\delta_k}\) ensures the above probability is at most \(\delta_k/M\). 
Applying a union bound over \(j=0,\dots,M-1\) yields \(\Pr(\cE_k)\ge 1-\delta_k\).

Next, we show \(j_k \le j_k^*\) on \(\cE_k\).
If \(j_k^*=M\), then \(j_k\le j_k^*\) holds trivially.
Assume \(j_k^*<M\). By definition of \(j_k^*\), we have
\[
\sqrt{2}\,h_b(\eta_{k,j_k^*}) \le \big\|\nabla f^{\eta_{k,j_k^*}}(\bx_k)\big\| - 2r_k.
\]
On the event \(\cE_k\), for all \(j\in\{0,\dots,M-1\}\) we have
\[
\big\| \nabla f^{\eta_{k,j}}_{\cB_k}(\bx_k)\big\|
\ge \big\|\nabla f^{\eta_{k,j}}(\bx_k)\big\| - \big\|\nabla f^{\eta_{k,j}}_{\cB_k}(\bx_k)-\nabla f^{\eta_{k,j}}(\bx_k)\big\|
\ge \big\|\nabla f^{\eta_{k,j}}(\bx_k)\big\| - r_k.
\]
Taking \(j=j_k^*\) and subtracting \(r_k\) on both sides gives
\[
\big\|\nabla f^{\eta_{k,j_k^*}}_{\cB_k}(\bx_k)\big\| - r_k
\ge \big\|\nabla f^{\eta_{k,j_k^*}}(\bx_k)\big\| - 2r_k
\ge \sqrt{2}\,h_b(\eta_{k,j_k^*}).
\]
Thus, the stopping condition is satisfied at \(j=j_k^*\), and since \(j_k\) is the smallest index that satisfies the stopping condition, we conclude \(j_k \le j_k^*\).
For the sample complexity, set \(\delta_k \equiv \delta/K\). Applying a union bound over \(k=1,\ldots,K\) ensures that the event \(\cE\triangleq\cap_k\cE_k\) is true with probability at least \(1-\delta\). Moreover, \(B_k \equiv \tilde{\cO}(\epsilon^{-2})\) satisfies the conditions of \cref{cor:adaptive}. Therefore, taking \(K=\cO(\epsilon^{-2})\) yields \(\sum B_k = \tilde{\cO}(\epsilon^{-4})\).
\end{proof}

This corollary shows that, with an appropriate batch size, the number of attempts required to determine \(\eta_k\) is bounded with high probability, and this incurs only an extra logarithmic factor in the total sample complexity. 

The above result, however, relies on the bounded deviation assumption~\eqref{eq:bounded_dev}, which is stronger than the bounded variance assumption in \cref{asp:estimator-bounds}. Fortunately, one can obtain the same type of high-probability guarantee under bounded variance by using a \emph{geometric median-of-means estimator} (i.e., taking the geometric median of block means). Such techniques are analyzed by \cite{minsker2015geometric} and related ideas appear in many studies \cite{nemirovskij1983problem,hsu2016loss,lugosi2019sub,joly2017estimation,bubeck2013bandits,davis2021low}.
The following corollary improves the probability bound of the sampled estimate using the geometric median-of-means technique.

\begin{corollary}[Geometric median-of-means in Hilbert space \cite{minsker2015geometric}]\label{cor:geometric-median}
Let \(X_1,\cdots,X_n\) be an i.i.d.\ sample from a distribution such that \(\mE[X]=\mu\) and \(\mE[\norm{X-\mu}^2] \leq \sigma^2\). For \(\delta \in (0,1)\), set \(\alpha_*=7/18\) and \(p_*=0.1\). Divide the sample into \(k\) disjoint groups \(\cG_1,\cdots, \cG_k\) of size \(\left\lfloor\frac{n}{k}\right\rfloor\) each, where 
\[
k\triangleq \left\lfloor\frac{\log (1 / \delta)}{\psi\left(\alpha_* ; p_*\right)}\right\rfloor+1 \leq\left\lfloor 3.5 \log \left(\frac{1}{\delta}\right)\right\rfloor+1,
\quad \text{with} \quad
\psi(\alpha ; p)=(1-\alpha) \log \frac{1-\alpha}{1-p}+\alpha \log \frac{\alpha}{p} .
\]
Define the geometric median-of-means as
\begin{align*}
\hat{\mu}_j & \triangleq \frac{1}{\left|\cG_j\right|} \sum_{i \in \cG_j} X_i, \quad j=1, \ldots, k, \\
\hat{\mu} & \triangleq \operatorname{med}\left(\hat{\mu}_1, \ldots, \hat{\mu}_k\right) 
\triangleq  \argmin _{u} \sum_{j=1}^k \norm{u - \hat{\mu}_j},
\end{align*}
then the estimation error is bounded as 
\begin{equation}
	\Pr \Big(\norm{\hat{\mu} - \mu} \geq 11 \sqrt{\frac{\sigma^2 \log(1.4/\delta)}{n}} \Big) \leq \delta.
\end{equation}
\end{corollary}

Following \cref{cor:geometric-median}, we estimate each \(\nabla f^{\eta_{k,j}}(\bx_k)\) by a geometric median-of-means estimator, denoted by \(\widehat{\nabla f^{\eta_{k,j}}_{\cB_k}}(\bx_k)\). Then we can obtain the same sample complexity for the high-probability guarantee of the comparison attempts.

\begin{corollary}
Following the definition in \cref{cor:grid-search-eta}. Under the bounded variance assumption, evaluate \(\widehat{\nabla f^{\eta_{k,j}}_{\cB_k}}(\bx_k)\) via the geometric median-of-means procedure in \cref{cor:geometric-median}. Choosing
\(B_k \geq \frac{121 \sigma^2}{r_k^2}\log \frac{1.4M}{\delta_k}
\)
ensures \(j_k \leq j_k^*\) with probability at least \(1-\delta_k\) for each \(k\). With
\(B_k \equiv \frac{121 \sigma^2}{ \epsilon^2}\log \frac{1.4MK}{\delta}\),
it can be ensured that \(j_k\leq j_k^*\) for all \(k=1,\cdots,K\) with probability at least \(1-\delta\), and the sample complexity is \(\sum_{k=1}^K B_k = \tilde{\cO}(\epsilon^{-4})\).
\end{corollary}

\begin{proof}
The proof follows the same steps as in \cref{cor:grid-search-eta}. With the geometric median-of-means gradient estimate, \eqref{eq:probability-bound-1} is replaced by
\begin{equation}
\Pr\Big(\big\|\widehat{\nabla f^{\eta_{k,j}}_{\cB_k}}(\bx_k)-\nabla f^{\eta_{k,j}}(\bx_k)\big\|\ge r_k\Big)
\le 1.4\exp\Big(-\frac{B_k r_k^2}{121\sigma^2}\Big).
\end{equation}
Choosing \(B_k\) based on this bound yields the final result.
\end{proof}

Without the stronger bounded deviation assumption or the geometric median-of-means technique, the tail control is weaker, and the resulting complexity is larger. 
In this setting, for a fixed pair \((k,j)\) one typically bounds the deviation of the mini-batch gradient estimator using Markov's inequality
\(
\Pr\Big(\big\|\nabla f^{\eta_{k,j}}_{\cB_k}(\bx_k)-\nabla f^{\eta_{k,j}}(\bx_k)\big\|\ge r_k\Big)
\le \frac{\sigma^2}{B_k r_k^2}.
\)
To ensure that none of the \(MK\) comparisons fails with probability at least \(1-\delta\), a union bound requires 
\(\sum_{k=1}^K\sum_{j=0}^{M-1}\Pr(\cdot)\le \delta\), which in turn forces \(B_k\) to scale on the order of \(\sigma^2 MK/(\delta r_k^2)\). 
With the typical choice \(r_k=\epsilon\) and \(K=\tilde{\cO}(\epsilon^{-2})\), this leads to an overall sample complexity \(\sum_{k=1}^K B_k=\tilde{\cO}(\epsilon^{-6})\).

\subsection{Adaptively-biased variance-reduced stochastic gradient method}
We next investigated whether the optimal sample complexity bound of \(\cO(\epsilon^{-3})\) in nonconvex regimes with \emph{unbiased} stochastic oracles is attainable in the biased case or not.
Inspired by \cite{cutkosky2020MomentumBasedVariance}, we designed a momentum-based variance-reduced algorithm, that similar to the AB-SG algorithm adaptively controls the bias -- see~\cref{alg:AB-VSG}. 

Unlike the unbiased case, the main challenge here is that as the bias levels at different iterations are potentially different, requiring us to consider the term \(\grad f^{\eta_{k+1}}_{\xi}(\bx_{k+1}) - \grad f^{\eta_k}_{\xi}(\bx_k)\) in the analysis. In contrast, the unbiased or fixed-bias cases only encounter the simpler term \(\grad f^{\eta}_{\xi}(\bx_{k+1}) - \grad f^{\eta}_{\xi}(\bx_k)\). 
To address this difference, we introduce additional assumptions-namely, \cref{asp:bounded-difference-variance,asp:f-eta-smooth} to properly analyze \cref{alg:AB-VSG}. For this algorithm, we propose two different options. Option I adapts the bias level based (only) on the quality of the current trajectory, making it more practical for implementation, especially as the structure of \(q(\eta)\) is unknown in many applications. However, it may suffer from extra error, which we have also quantified in our convergence result. Option II, on the other hand, also adjusts \(\eta\) to control the error caused by the term \(\grad f^{\eta_{k+1}}_{\xi}(\bx_{k+1}) - \grad f^{\eta_k}_{\xi}(\bx_k)\), ensuring convergence to \(\epsilon\)-stationarity, though it requires knowledge of \(q(\eta)\).

\begin{remark}
	The \cref{asp:bounded-difference-variance} is the generalization of the assumption for Theorem 2.1 in \cite{giles2015MultilevelMonte} where \(q(\eta) \triangleq  c 2^{-a\eta}\) with constants \(c,a>0\). 
This form of assumption has also been adopted in various other works, including conditional stochastic optimization~\cite{hu2021BiasVarianceCostTradeoff} and distributionally robust optimization~\cite{levy2020LargeScaleMethods}.
\end{remark}

\begin{algorithm}[htb]
	\caption{Adaptively-biased variance-reduced stochastic gradient (AB-VSG)}\label{alg:AB-VSG}
	\begin{algorithmic}[1]
		\REQUIRE stepsize \(\{ \alpha_k \}\), momentum parameter \(\{ \beta_k \}\), adaptation parameters \(\{\gamma_k,\tau_k\}\), initial point {\(\bx_1\)}, bias-control parameter upper bound \(\bar{\eta}\), choice of Option I or II.
		\STATE Take i.i.d. samples \(\cB_1 = \{ \xi_1^1,\xi_1^2,\cdots, \xi_1^{B_1} \}\) and find the smallest \(\eta\)  such that
		\begin{align*}
			 & \textbf{(Option I)} \quad  h_b^2 (\eta) \leq \gamma_1 \norm{\grad f^{\eta}_{\cB_1}}^2                                                                                \\
			 & \textbf{(Option II)}  \quad h_b^2(\eta) \leq  \gamma_{1}\|\grad f^{\eta}_{\cB_1}(\bx_1)\|^2 \ \text{and} \  q(\eta) \leq \tau_{1}\|\grad f^{\eta}_{\cB_1}(\bx_1)\|^2\end{align*}
            \hspace{-0.2cm}
		\STATE   Set \(\eta_1=\min\{\eta, \bar{\eta}\}\) and \(\bg_1 = \grad f_{\cB_1}^{\eta_1}(\bx_1)\),
		\FOR{\(k=1,\cdots,K\)}
		\STATE Update \[\bx_{k+1} = \bx_k - \alpha_k \bg_k\]
            \vspace{-0.2cm}
		\STATE Find smallest \(\eta\geq \eta_k\) such that
            \vspace{-0.2cm}
		\begin{align*}
			 & \textbf{(Option I)}  \quad  h_b^2(\eta) \leq \gamma_{k+1} \| \bg_k \|^2                                                            \\
			 & \textbf{(Option II)}  \quad h_b^2(\eta) \leq \gamma_{k+1} \| \bg_k \|^2 \quad \text{and} \quad q(\eta) \leq \tau_{k+1} \| \bg_k \|^2
		\end{align*}
		and set \(\eta_{k+1}=\min\{\eta, \bar{\eta}\}\)
		\STATE Take i.i.d. samples \(\cB_{k+1} = \{ \xi_{k+1}^1,\cdots, \xi_{k+1}^{B_{k+1}} \}\) and update
		\begin{equation}\label{eq:gk-update}
			\bg_{k+1} = \grad f_{\cB_{k+1}}^{\eta_{k+1}}(\bx_{k+1})+(1 - \beta_{k+1}) ( \bg_k - \grad f^{\eta_k}_{\cB_{k+1}}(\bx_k))
		\end{equation}\\
		\ENDFOR
	\end{algorithmic}
\end{algorithm}

To initiate the analysis, we first provide the recursive bound of the gradient estimator error \(\norm{\be_k} \triangleq  \norm{\bg_k - \grad f^{\eta_k}(\bx_k)}\) where \(\bg_k\) is defined in \eqref{eq:gk-update}.

\begin{lemma}\label{lem:ek-bound} 
Let \cref{asp:f-eta-smooth,asp:bounded-difference-variance} hold and \(\beta_{k+1} \in (0,1)\). Then, the error of the gradient estimator \(\bg_k\) generated by \eqref{eq:gk-update} in  \cref{alg:AB-VSG} is bounded as
	\begin{align*}
		\mE[\|\be_{k+1}\|^2 |\cF_k] \leq &  (1-\beta_{k+1})^2 \norm{\be_k}^2 + \frac{2\beta_{k+1}^2}{B_{k+1}} h_v^2(\eta_{k+1}) \\
		+ & \frac{4L^2\alpha_k^2(1-\beta_{k+1})^2}{B_{k+1}}\norm{\bg_k}^2 + \frac{4\bar{\eta}^2(1-\beta_{k+1})^2}{B_{k+1}} q(\eta_k)    .
	\end{align*}
\end{lemma}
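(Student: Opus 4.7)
My plan is to decompose $\be_{k+1}$ into a contractive multiple of $\be_k$ plus conditionally mean-zero noise, take conditional second moments (killing the cross term), and control each noise piece separately. Substituting $\bg_k = \be_k + \grad f^{\eta_k}(\bx_k)$ into the recursion \eqref{eq:gk-update} and subtracting $\grad f^{\eta_{k+1}}(\bx_{k+1})$ on both sides rearranges to
\begin{align*}
\be_{k+1} = (1-\beta_{k+1})\,\be_k + \beta_{k+1}\,U_{k+1} + (1-\beta_{k+1})\,V_{k+1},
\end{align*}
where $U_{k+1} \triangleq \grad f^{\eta_{k+1}}_{\cB_{k+1}}(\bx_{k+1}) - \grad f^{\eta_{k+1}}(\bx_{k+1})$ is the centered minibatch noise at the new point, and $V_{k+1} \triangleq [\grad f^{\eta_{k+1}}_{\cB_{k+1}}(\bx_{k+1}) - \grad f^{\eta_k}_{\cB_{k+1}}(\bx_k)] - [\grad f^{\eta_{k+1}}(\bx_{k+1}) - \grad f^{\eta_k}(\bx_k)]$ is the variance-reduction residual.

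Next, since $\bx_{k+1},\eta_{k+1},\bx_k,\eta_k$ and $\be_k$ are all $\cF_k$-measurable while $\cB_{k+1}$ is the only source of randomness, both $U_{k+1}$ and $V_{k+1}$ have zero conditional mean, so
\begin{align*}
\mE[\|\be_{k+1}\|^2\mid\cF_k] = (1-\beta_{k+1})^2\|\be_k\|^2 + \mE\bigl[\|\beta_{k+1}U_{k+1} + (1-\beta_{k+1})V_{k+1}\|^2 \bigm|\cF_k\bigr].
\end{align*}
I would then apply $\|a+b\|^2\leq 2\|a\|^2+2\|b\|^2$ to split the second piece into a $U$ part and a $V$ part.

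For the $U$ part, $U_{k+1}$ is the centered average of $B_{k+1}$ i.i.d. samples, so Assumption~\ref{asp:estimator-bounds} gives $\mE[\|U_{k+1}\|^2\mid\cF_k]\leq h_v^2(\eta_{k+1})/B_{k+1}$, producing the second term in the claim. For the $V$ part, I would write $V_{k+1} = V^{(x)}_{k+1} + V^{(\eta)}_{k+1}$ with
\begin{align*}
V^{(x)}_{k+1} &= \bigl[\grad f^{\eta_{k+1}}_{\cB_{k+1}}(\bx_{k+1}) - \grad f^{\eta_{k+1}}_{\cB_{k+1}}(\bx_k)\bigr] - \bigl[\grad f^{\eta_{k+1}}(\bx_{k+1}) - \grad f^{\eta_{k+1}}(\bx_k)\bigr],\\
V^{(\eta)}_{k+1} &= \bigl[\grad f^{\eta_{k+1}}_{\cB_{k+1}}(\bx_k) - \grad f^{\eta_k}_{\cB_{k+1}}(\bx_k)\bigr] - \bigl[\grad f^{\eta_{k+1}}(\bx_k) - \grad f^{\eta_k}(\bx_k)\bigr],
\end{align*}
each a centered minibatch average. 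Assumption~\ref{asp:f-eta-smooth} applied per sample yields $\mE[\|V^{(x)}_{k+1}\|^2\mid\cF_k]\leq L^2\|\bx_{k+1}-\bx_k\|^2/B_{k+1} = L^2\alpha_k^2\|\bg_k\|^2/B_{k+1}$, and Assumption~\ref{asp:bounded-difference-variance} yields $\mE[\|V^{(\eta)}_{k+1}\|^2\mid\cF_k]\leq q(\eta_k,\eta_{k+1})/B_{k+1}$. One more application of $\|a+b\|^2\leq 2\|a\|^2+2\|b\|^2$ combines these into the variance-reduction contribution, after which collecting the three noise contributions gives the stated recursion.

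The main obstacle is the genuinely new term $V^{(\eta)}_{k+1}$: in the unbiased STORM analysis $\eta_{k+1}=\eta_k$ so this term vanishes, but here the adaptive rule can raise $\eta_{k+1}$ relative to $\eta_k$, which is precisely why Assumption~\ref{asp:bounded-difference-variance} is introduced. To match the statement as written, which references $q(\eta_k)$ with a single argument and a prefactor $\bar\eta^2$, I would further dominate $q(\eta_k,\eta_{k+1})$ by telescoping $\grad f^{\eta_{k+1}}_\xi(\bx_k) - \grad f^{\eta_k}_\xi(\bx_k)$ over at most $\bar\eta$ bias-level increments and applying Jensen's inequality, so that the two-argument $q$ collapses to $\bar\eta^2 q(\eta_k)$; this also explains why the uniform cap $\bar\eta$ is needed in the algorithm.
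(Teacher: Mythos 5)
Your proposal is correct and follows essentially the same route as the paper: the same three-term decomposition of $\be_{k+1}$ into the contractive $(1-\beta_{k+1})\be_k$ plus the two conditionally mean-zero minibatch noises, the same vanishing cross term, the same Young splits, and the same treatment of the new $\eta$-mismatch term via telescoping over bias increments with Jensen and the cap $\bar\eta$ (this is precisely the paper's auxiliary incremental-variance lemma). Your closing observation about collapsing the two-argument $q(\eta_k,\eta_{k+1})$ to $\bar\eta^2 q(\eta_k)$ also matches the paper's argument, which additionally uses monotonicity of $q$ at that step.
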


Compared to the similar lemma in \cite{cutkosky2020MomentumBasedVariance}, our bound includes an additional term \(q(\eta_k)\), indicating that if the difference between two consecutive biased gradients at a fixed \(\bx\) is excessively large, applying such variance-reduction technique to the adaptively biased algorithm becomes ineffective.
Nonetheless, according to \cref{asp:bounded-difference-variance}, \(q(\eta)\) is a non-increasing function, thus allowing for the reduction of the additional term by either increasing \(\eta\) or increasing the batch size \(B_{k+1}\).

The convergence rate of this algorithm is provided in \cref{thm:AB-VSG-fixed-step-rate} below.
Notice that if the required \(\eta_k\) exceeds the predefined \(\bar{\eta}\), \cref{alg:AB-VSG} simplifies to the fixed-bias variance-reduced algorithm and the result of \cref{thm:AB-VSG-fixed-step-rate} remains applicable.

\begin{theorem}[Fixed stepsize]\label{thm:AB-VSG-fixed-step-rate}
	Let \(B_k \equiv B, \ \forall k \geq 2\), and \(\alpha_k \equiv \alpha =  c B^{2/3}K^{-1/3}\) where \(c \leq \frac{K^{1/3}}{2LB^{1/6}} \min\{\frac{1}{B^{1/2}}, \frac{1}{4} \}\). Furthermore, let \(\beta_k \equiv \beta = \frac{64L^2 \alpha^2}{B}\), \(\gamma_k \equiv \gamma \leq \frac{1}{16}\), and \(\tau_k \equiv  \tau \leq \frac{L^2 \alpha^2}{2 \bar{\eta}}\). Then, under \cref{asp:f-infty-property,asp:estimator-bounds,asp:bounded-difference-variance,asp:f-eta-smooth}, the sequence \(\{ \bx_k \}\) generated by \cref{alg:AB-VSG} satisfies
	{\small\begin{align*}
		 & \frac{1}{K}\sum_{k=1}^K\mE[\norm{\grad f(\bx_k)}^2] \leq \frac{2 (f(\bx_1) - f^*)}{c(BK)^{2/3}} + \frac{2\sigma^2}{c^2B_1K^{1/3}B^{4/3}}  + 2h_b^2(\bar{\eta}) + \frac{256L^2c^2\sigma^2}{(BK)^{2/3}} + \Delta, \numberthis \label{eq:STORM-rate}
	\end{align*}}
	where \(\Delta = \frac{\bar{\eta}\sum_{k=1}^Kq(\eta_k)}{16L^2c^2B^{4/3}K^{1/3}} \) with Option I and 0 with Option II.
\end{theorem}

\begin{corollary}
	With \(B =  1\), \(c = \frac{1}{8L}\), under Option II, \cref{thm:AB-VSG-fixed-step-rate} is reduced to
	{\small\begin{align*}
		\frac{1}{K}\sum_{k=1}^K\mE[\norm{\grad f(\bx_k)}^2] \leq & \frac{16L(f(\bx_1) - f^*)}{K^{2/3}} + \frac{128L^2\sigma^2}{B_1 K^{1/3}}  + 2h_b^2(\bar{\eta}) + \frac{4\sigma^2}{K^{2/3}}.
	\end{align*}}
	Hence, by setting \(B_1 = K^{1/3}\), \(h_b^2(\bar{\eta}) = \epsilon^2\) and \(K = \epsilon^{-3}\) the sample complexity is \(\cO(\epsilon^{-1} + \epsilon^{-3})\), the \(\eta\)-complexity is \(\cO(h_b^{-1}(\epsilon)\epsilon^{-3})\), and the \(\eta B\)-complexity is \(\cO(h_b^{-1}(\epsilon)(\epsilon^{-1}+\epsilon^{-3}))\).
\end{corollary}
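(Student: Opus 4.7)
My strategy is pure substitution into~\cref{thm:AB-STORM-fixed-step-rate} followed by an accounting of the three complexity measures. First, I would verify that the choices $B=1$ and $c=1/(8L)$ satisfy the theorem's hypothesis $c\leq \tfrac{K^{1/3}}{2LB^{1/6}}\min\{B^{-1/2},1/4\}$, which with $B=1$ reduces to $c\leq K^{1/3}/(8L)$, a condition valid for every $K\geq 1$. The induced stepsize is $\alpha=cB^{2/3}K^{-1/3}=\frac{1}{8L}K^{-1/3}$, the momentum parameter is $\beta=64L^2\alpha^2/B=K^{-2/3}\in(0,1)$, and the adaptation parameters $\gamma\leq 1/16$ and $\tau\leq L^2\alpha^2/(2\bar{\eta})=1/(128 K^{2/3}\bar{\eta})$ can be chosen to meet their upper bounds.

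Second, I would plug $B=1$ and $c=1/(8L)$ into~\eqref{eq:STORM-rate} and simplify term-by-term. The leading term becomes $2(f(\bx_1)-f^*)/(cK^{2/3})=16L(f(\bx_1)-f^*)/K^{2/3}$; the second term becomes $2\sigma^2/(c^2B_1K^{1/3})=128L^2\sigma^2/(B_1K^{1/3})$; the bias term $2h_b^2(\bar{\eta})$ is unchanged; and the final term simplifies to $256L^2c^2\sigma^2/K^{2/3}=4\sigma^2/K^{2/3}$. Because Option II is chosen, the adaptive rule forces $q(\eta_k)\leq \tau\|\bg_{k-1}\|^2$ at each step, so $\Delta=0$ and no extra term appears. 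This reproduces exactly the displayed inequality of the corollary.

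Third, with $K=\epsilon^{-3}$, $B_1=K^{1/3}=\epsilon^{-1}$, and $\bar{\eta}$ selected so that $h_b^2(\bar{\eta})=\epsilon^2$ (i.e., $\bar{\eta}=h_b^{-1}(\epsilon)$), each of the four terms on the right-hand side is $\cO(\epsilon^2)$, so the averaged squared gradient norm is $\cO(\epsilon^2)$. The complexity accounting is then elementary: the sample complexity equals $B_1+(K-1)B=\epsilon^{-1}+\epsilon^{-3}-1=\cO(\epsilon^{-1}+\epsilon^{-3})$; in the worst case $\eta_k=\bar{\eta}$ for every $k$, yielding the $\eta$-complexity $K\bar{\eta}=\cO(h_b^{-1}(\epsilon)\epsilon^{-3})$ and the $\eta B$-complexity $B_1\bar{\eta}+(K-1)\bar{\eta}=\cO(h_b^{-1}(\epsilon)(\epsilon^{-1}+\epsilon^{-3}))$.

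There is no substantive technical obstacle, since the corollary is an immediate specialization of~\cref{thm:AB-STORM-fixed-step-rate}. The only points requiring care are: (i) verifying the scaling condition $c\leq K^{1/3}/(8L)$, which is precisely why a constant $c$ is permissible only after $B$ has been fixed at $1$; (ii) invoking Option II to kill the $\Delta$ contribution, since under Option I one would additionally need to control $\sum_k q(\eta_k)$; and (iii) recognizing that, because $\{\eta_k\}$ are random and adaptively chosen, the $\eta$- and $\eta B$-complexities can only be reported as worst-case bounds driven by the cap $\bar{\eta}$.
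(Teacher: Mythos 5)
Your proposal is correct and matches the intended argument: the corollary is a direct specialization of \cref{thm:AB-STORM-fixed-step-rate}, and your verification of the stepsize condition ($c\le K^{1/3}/(8L)$ when $B=1$), the term-by-term substitution into \eqref{eq:STORM-rate}, and the worst-case accounting of the sample, $\eta$-, and $\eta B$-complexities all check out. The only cosmetic remark is that $\Delta=0$ under Option II is a conclusion of the theorem itself (the $q(\eta_k)$ terms are absorbed into the negative $\|\bg_k\|^2$ terms in its proof) rather than an immediate consequence of the condition $q(\eta)\le\tau\|\bg_{k-1}\|^2$ alone, but since you invoke the theorem as stated this does not affect correctness.
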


\begin{corollary}\label{cor:ABSTORM-minibatch}
	With \(B \gg1\), \(c = \frac{K^{1/3}}{8LB^{2/3}}\), under Option II, from \cref{thm:AB-VSG-fixed-step-rate} we have
	{\small\begin{align*}
		\frac{1}{K}\sum_{k=1}^K\mE[\norm{\grad f(\bx_k)}^2] \leq & \frac{16L(f(\bx_1) - f^*)}{K} + \frac{128L^2\sigma^2}{B_1K}  + 2h_b^2(\bar{\eta}) + \frac{4\sigma^2}{B^2}.
	\end{align*}}
	Hence, setting \(B_1 = 1, B = K^{1/2}, h_b^2(\bar{\eta}) = \epsilon^2\) and \(K = \epsilon^{-2}\), the sample complexity is \(\cO(\epsilon^{-3})\), the \(\eta\)-complexity is \(\cO(h_b^{-1}(\epsilon)\epsilon^{-2})\), and the \(\eta B\)-complexity is \(\cO(h_b^{-1}(\epsilon)\epsilon^{-3})\).
\end{corollary}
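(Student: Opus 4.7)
The plan is to derive \cref{cor:ABSTORM-minibatch} as a direct specialization of \cref{thm:AB-STORM-fixed-step-rate}, so the proof is essentially algebraic bookkeeping once we confirm the parameter choices are admissible. The key observation is that under Option II the extra term $\Delta$ vanishes, so it suffices to bound the four remaining terms on the right-hand side of \eqref{eq:STORM-rate}.

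First, I would verify that the proposed stepsize constant $c = K^{1/3}/(8LB^{2/3})$ satisfies the constraint $c \leq \tfrac{K^{1/3}}{2LB^{1/6}}\min\{B^{-1/2},1/4\}$ imposed by the theorem. Since $B \gg 1$, the minimum is attained at $B^{-1/2}$, and the constraint reduces to $c \leq K^{1/3}/(2LB^{2/3})$, which is clearly satisfied (with a factor-of-4 slack used by the momentum/adaptation parameters $\beta, \gamma, \tau$ in the theorem).

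Next, I would substitute $c$ term by term. Plugging $c = K^{1/3}/(8LB^{2/3})$ into the first term gives $2(f(\bx_1)-f^*)\cdot 8LB^{2/3}/(K^{1/3}(BK)^{2/3}) = 16L(f(\bx_1)-f^*)/K$. Similarly, $c^{-2} = 64L^2 B^{4/3}/K^{2/3}$ turns the second term into $128L^2\sigma^2/(B_1 K)$, and the fourth term into $(256L^2\sigma^2/(BK)^{2/3})\cdot K^{2/3}/(64L^2 B^{4/3}) = 4\sigma^2/B^2$. The third term $2h_b^2(\bar\eta)$ is unchanged. This recovers the displayed inequality of the corollary.

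Finally, I would insert $B_1=1$, $B=K^{1/2}$, $h_b^2(\bar\eta)=\epsilon^2$, and $K=\epsilon^{-2}$, producing four $\cO(\epsilon^2)$ terms (respectively $\cO(\epsilon^2)$, $\cO(\epsilon^2)$, $\cO(\epsilon^2)$, $\cO(\epsilon^2)$ since $B^2 = K$), which yields $\tfrac{1}{K}\sum_k\mE[\norm{\grad f(\bx_k)}^2] = \cO(\epsilon^2)$. The sample complexity is $\sum_{k=1}^K B_k = B_1 + (K-1)B = \cO(K^{3/2}) = \cO(\epsilon^{-3})$. For the $\eta$- and $\eta B$-complexities, I would use the worst-case bound $\eta_k\leq\bar\eta = h_b^{-1}(\epsilon)$ from the algorithm's safeguard, yielding $\sum\eta_k \leq K\bar\eta = \cO(h_b^{-1}(\epsilon)\epsilon^{-2})$ and $\sum\eta_kB_k \leq \bar\eta\cdot\cO(K^{3/2}) = \cO(h_b^{-1}(\epsilon)\epsilon^{-3})$. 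There is no real obstacle here beyond careful tracking of exponents; the interesting work has already been done in \cref{thm:AB-STORM-fixed-step-rate} and \cref{lem:ek-bound}.
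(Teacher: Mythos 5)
Your proposal is correct and matches the paper's (implicit) argument: the corollary is obtained exactly by checking that $c = K^{1/3}/(8LB^{2/3})$ satisfies the stepsize constraint of \cref{thm:AB-STORM-fixed-step-rate}, substituting $c$ into \eqref{eq:STORM-rate} with $\Delta=0$ under Option II, and then plugging in $B_1=1$, $B=K^{1/2}$, $K=\epsilon^{-2}$ to read off the three complexities. All of your exponent bookkeeping checks out.
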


In the following theorem, we analyze \cref{alg:AB-VSG} employing a diminishing stepsize strategy, assuming all batch sizes are set to 1. 
However, this diminishing stepsize suffers from the logarithmic factor, which is a common issue in similar settings with unbiased estimators.

\begin{theorem}[Diminishing stepsize]\label{thm:AB-VSG-varying-step-rate}
	Let \(B_k \equiv 1\), \(\alpha_k = \frac{c}{(w+k)^{1/3}}\), and \(\beta_k = s \alpha_k^2\) where \(s \geq \frac{1}{6c^3L} + 64L^2\) and \(w\geq \max \{1, 8c^3L^3, (\frac{sc}{2L})^3 \}\). Furthermore, let \(\gamma_k \equiv \gamma \leq \frac{1}{16}\) and \(\tau_k  \leq \frac{L^2 \alpha_k^2}{2 \bar{\eta}}\). Then, under \cref{asp:f-infty-property,asp:estimator-bounds,asp:bounded-difference-variance,asp:f-eta-smooth}, the sequence \(\{ \bx_k \}\) generated by \cref{alg:AB-VSG} satisfies
	{\small\begin{align*}
		  \mE [\norm{\grad f(\bx_R)}^2] \leq & \left(2(f(\bx_1) - f^*) + \frac{2w^{1/3}\sigma^2}{c}  + \frac{s^3c^3\sigma^2}{16L^2} \log(K+2)\right) \left(\frac{w^{1/3}}{cK}+ \frac{1}{cK^{2/3}} \right)                                           \\
		 & + 2h_b^2(\bar{\eta}) + \Delta,
	\end{align*}}
	where \(\Delta =  \frac{\bar{\eta}\sum_{k=1}^K q(\eta_k)}{16L^2c^2}\left(\frac{w^{2/3}}{K}+ \frac{1}{K^{1/3}}\right)\) under Option I and 0 under Option II,  and \(R\) is a random positive integer with the probability distribution \(P(R=k) = \frac{\alpha_k}{\sum_{j=1}^K\alpha_j}\).
\end{theorem}
Note that with Option II, letting \(h_b^2(\bar{\eta}) = \epsilon^2\), the sample complexity of the algorithm to obtain an \(\epsilon\)-stationary point is \(\tilde{\cO}(\epsilon^{-3})\).

\begin{remark}
We should note that \cref{alg:AB-VSG} with Option I might perform weaker compared to those without variance reduction due to the potential dominance of the error term \(\Delta\). 
 A similar phenomenon is also observed in the accelerated algorithm with inexact oracle in convex regimes -- see, e.g., \cite{devolder2014FirstorderMethods}.
\end{remark}

\section{Nonsmooth regime with biased oracle of the smooth component}
In this section, we consider solving problem~\eqref{eq:main} in the nonsmooth case, i.e., when \(r(\bx) \neq 0\). We will use the proximal gradient method with the biased stochastic gradient oracle of the smooth component \(f(\bx)\).

\begin{definition}[Bregman distance]
For a continuously differentiable and 1-strongly convex function \(w:\mR^d \rightarrow \mR\), the Bregman distance is defined as
\begin{equation}\label{eq:Bregman-distance}
	D_\omega(\bx,\by) \triangleq  \omega(\bx) - \omega(\by) - \fprod{\grad \omega(\by),\bx-\by}.
\end{equation}
\end{definition}
When \(\omega(\bx) = \frac{1}{2}\norm{\bx}^2\), \(D_\omega(\bx,\by) = \frac{1}{2}\norm{\bx-\by}^2\), which recovers the Euclidean norm. Note that since \(w\) is strongly convex, \(D_\omega(\bx,\by) \geq \norm{\bx-\by}^2\).
We assume \(\omega\) is chosen such that the proximal problem
\vspace{-0.2cm}
\begin{equation}
	\bx^+ \triangleq  \argmin_{\by \in \mR^d} \fprod{\bg, \by - \bx} + r(\by) + \frac{1}{\alpha}D_\omega(\by,\bx),
 \vspace{-0.2cm}
\end{equation}
is easy to solve, where \(\bg\) is a biased estimator of \(\grad f(\bx)\).

Finally, note that \(\bx^+ = \bx\) if and only if \(0 \in \bg + \partial r(\bx)\), hence, the distance \(\norm{\bx^+ - \bx} \) is an appropriate measure of stationarity.
\begin{definition}[Stationarity measure]
Define
\begin{equation*}
\hat{\bx}^+   \triangleq  \argmin_{\by \in \mR^d} \fprod{\grad f(\bx), \by - \bx } + r(\by) + \frac{1}{\alpha}D_\omega(\by,\bx).
\vspace{-0.2cm}
\end{equation*}
The distance \(\frac{1}{\alpha}\norm{\hat{\bx}^+ - \bx}\) is the measure of stationarity for problem~\eqref{eq:main}.
\end{definition}
Indeed, \(\frac{1}{\alpha}\norm{\hat{\bx}^+ - \bx}\) is the generalized gradient mapping of \(\Psi(\cdot)\). Note that if \(r(\bx) = 0\) and \(\omega(\bx) = \frac{1}{2}\norm{\bx}^2\), \(\frac{1}{\alpha}\norm{\hat{\bx}^+ - \bx} =  \grad f(\bx) = \grad \Psi(\bx)\)-- see, e.g., \cite{ghadimi2016MinibatchStochastic}.

\subsection{Proximal biased stochastic gradient method}
Proximal stochastic gradient methods use proximal gradient maps with a stochastic gradient estimate \(\bg_k\) by solving problems of the form
\vspace{-0.2cm}
\begin{equation}\label{eq:prox-subproblem}
	\bx_{k+1} = \argmin_{\by \in \mR^d}  \fprod{\bg_k, \by - \bx_k} + r(\by) + \frac{1}{\alpha_k}D_\omega(\by,\bx_k).
 \vspace{-0.2cm}
\end{equation}
Given such a proximal update, the performance of the algorithms is primarily governed by the choice of gradient estimate \(\bg_k\) and the stepsize \(\alpha_k\). Indeed, if \(\bg_k\) is a descent direction and \(\alpha_k\) is chosen carefully, these algorithms are guaranteed to converge. To illustrate this point, we will first analyze the one-iteration progression of the update \eqref{eq:prox-subproblem}. To do so, we first define an auxiliary sequence
\begin{equation}\label{eq:x-hat}
	\hat{\bx}_{k+1} \triangleq  \argmin_{\by\in\mR^d} \fprod{\grad f(\bx_k), \by - \bx_k} + r (\by) + \frac{2}{\alpha_k}D_\omega(\by,\bx_k),
\end{equation}
and we recall  the choice of \(\frac{2}{\alpha_k}\norm{\hat{\bx}_{k+1} - \bx_k}\) as an appropriate stationarity measure.

\begin{lemma}[One-iteration improvement]\label{lemma:prox-descent-lemma}
	Assume \(\alpha_k \leq \frac{1}{2L}\), then \(\bx_{k+1}\) and \(\hat{\bx}_{k+1}\) defined in \cref{eq:prox-subproblem,eq:x-hat} satisfy
	{\small\begin{align*}
		\Psi(\bx_{k+1}) - \Psi(\bx_k) \leq & \frac{\alpha_k}{2}\| \grad f(\bx_k) - \bg_k\|^2 \hspace{-0.1cm} - \left(\frac{3}{2\alpha_k} - L \right)\| \hat{\bx}_{k+1} - \bx_k\|^2 
  & \hspace{-0.4cm} - \left(\frac{1}{ 2\alpha_k} - \frac{L}{2}\right)\| \bx_{k+1} - \bx_k \|^2. \numberthis \label{eq:prox-descent-lemma}
	\end{align*}}
\end{lemma}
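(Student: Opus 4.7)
The plan is to combine three ingredients: (i) the $L$-smooth descent inequality for $f$ at $\bx_{k+1}$, (ii) the first-order optimality of the proximal subproblem~\eqref{eq:prox-subproblem} evaluated at $\hat{\bx}_{k+1}$, and (iii) the first-order optimality of the auxiliary subproblem~\eqref{eq:x-hat} evaluated at $\bx_k$. The noise $\grad f(\bx_k)-\bg_k$ will emerge as a single cross-term handled by Young's inequality, while the hypothesis $\alpha_k\le 1/(2L)$ is invoked only in the last step to make the two coefficients on the squared norms non-negative.

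First, I would write $\Psi(\bx_{k+1})-\Psi(\bx_k) \le \fprod{\grad f(\bx_k),\bx_{k+1}-\bx_k} + \tfrac{L}{2}\|\bx_{k+1}-\bx_k\|^2 + \varphi(\bx_{k+1})-\varphi(\bx_k)$ from the $L$-smoothness of $f$, then split both the linear term and the $\varphi$ difference through $\hat{\bx}_{k+1}$. Each proximal subproblem is strongly convex with respect to $D_\omega$ (with moduli $1/\alpha_k$ and $2/\alpha_k$, respectively), so the three-point optimality relation yields
\begin{align*}
\varphi(\bx_{k+1})-\varphi(\hat{\bx}_{k+1}) + \fprod{\bg_k,\bx_{k+1}-\hat{\bx}_{k+1}} \le \tfrac{1}{\alpha_k}\bigl[D_\omega(\hat{\bx}_{k+1},\bx_k) - D_\omega(\hat{\bx}_{k+1},\bx_{k+1}) - D_\omega(\bx_{k+1},\bx_k)\bigr],
\end{align*}
\begin{align*}
\varphi(\hat{\bx}_{k+1})-\varphi(\bx_k) + \fprod{\grad f(\bx_k),\hat{\bx}_{k+1}-\bx_k} \le -\tfrac{2}{\alpha_k}\bigl[D_\omega(\hat{\bx}_{k+1},\bx_k) + D_\omega(\bx_k,\hat{\bx}_{k+1})\bigr].
\end{align*}
Substituting both into the $L$-smoothness bound, the $\varphi(\hat{\bx}_{k+1})$ terms cancel and the remaining linear pieces collapse into $\fprod{\grad f(\bx_k)-\bg_k,\bx_{k+1}-\hat{\bx}_{k+1}}$, which Young's inequality bounds by $\tfrac{\alpha_k}{2}\|\grad f(\bx_k)-\bg_k\|^2 + \tfrac{1}{2\alpha_k}\|\bx_{k+1}-\hat{\bx}_{k+1}\|^2$.

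To close, I would apply the $1$-strong convexity of $\omega$, i.e., $D_\omega(\cdot,\cdot)\ge \tfrac{1}{2}\|\cdot\|^2$, to each surviving Bregman term. The $\tfrac{1}{2\alpha_k}\|\bx_{k+1}-\hat{\bx}_{k+1}\|^2$ contribution from Young's is then exactly absorbed by $-\tfrac{1}{\alpha_k}D_\omega(\hat{\bx}_{k+1},\bx_{k+1})$; the pair $-\tfrac{1}{\alpha_k}D_\omega(\hat{\bx}_{k+1},\bx_k) - \tfrac{2}{\alpha_k}D_\omega(\bx_k,\hat{\bx}_{k+1})$ collapses to $-\tfrac{3}{2\alpha_k}\|\hat{\bx}_{k+1}-\bx_k\|^2$, which is tighter than the stated $-(\tfrac{3}{2\alpha_k}-L)\|\hat{\bx}_{k+1}-\bx_k\|^2$; and $\tfrac{L}{2}\|\bx_{k+1}-\bx_k\|^2 -\tfrac{1}{\alpha_k}D_\omega(\bx_{k+1},\bx_k)$ yields $-(\tfrac{1}{2\alpha_k}-\tfrac{L}{2})\|\bx_{k+1}-\bx_k\|^2$. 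The main obstacle is the careful bookkeeping of the \emph{asymmetric} Bregman terms $D_\omega(\hat{\bx}_{k+1},\bx_k)$ versus $D_\omega(\bx_k,\hat{\bx}_{k+1})$: both must be retained with the proper weights arising from the two subproblems' different moduli in order to produce the full $3/(2\alpha_k)$ coefficient; symmetrizing or discarding either one prematurely loses a factor of two and breaks the advertised bound.
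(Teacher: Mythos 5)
Your proposal is correct and follows essentially the same strategy as the paper's proof: the three-point optimality relations for the two proximal subproblems, Young's inequality on the cross-term $\fprod{\grad f(\bx_k)-\bg_k,\bx_{k+1}-\hat{\bx}_{k+1}}$, and the $1$-strong convexity of $\omega$ to convert the surviving Bregman terms into squared norms. The one genuine (if minor) deviation is that you apply the smoothness descent inequality only once, at the pair $(\bx_{k+1},\bx_k)$, and split the linear term through $\hat{\bx}_{k+1}$, whereas the paper routes the function-value difference through $\hat{\bx}_{k+1}$ via a symmetrized smoothness bound that picks up extra $L\,D_\omega(\hat{\bx}_{k+1},\bx_k)$ terms; as you note, this gives you the slightly tighter coefficient $-\tfrac{3}{2\alpha_k}$ in place of $-(\tfrac{3}{2\alpha_k}-L)$, and it also means the hypothesis $\alpha_k\le \tfrac{1}{2L}$ is not actually needed in your derivation (only $\alpha_k\le 1/L$ for the last coefficient's sign), while the paper genuinely uses $\tfrac{1}{\alpha_k}-2L\ge 0$ when lower-bounding $D_\omega(\hat{\bx}_{k+1},\bx_k)$ against a possibly sign-indefinite coefficient.
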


Similar to the smooth setting, as demonstrated in \cref{lemma:prox-descent-lemma}, controlling the gradient estimate error \(\| \grad f(\bx_k) - \bg_k\|^2\) is crucial to ensuring that the update step effectively decreases the function value in expectation. 

\cref{alg:AB-prox} presents the adaptively-biased proximal gradient method. Given that the norm of gradient is no longer the stationarity measure, \(\eta_k\) is determined by the estimate of \(\frac{2}{\alpha_k}\norm{\hat{\bx}_{k+1} - \bx_k}\).

\begin{algorithm}[htb]
	\caption{Proximal adaptively-biased stochastic gradient (ProxAB-SG)}\label{alg:AB-prox}
	\begin{algorithmic}[1]
         \REQUIRE stepsize \(\{\alpha_k\}\), batch size \(\{ B_k \}\), initial point \(\bx_1\), upper bound on the bias-control parameter \(\bar{\eta}\), initial bias-control parameter \(\eta_0\)
        \FOR{\(k=0, \cdots, K-1\)}
		\STATE If \(k\geq 1\), find the smallest \(\eta\)  such that
		\begin{equation} \label{eq:ABSA-condition}
			h_b^2(\eta) \leq \left(\frac{1}{2\alpha_k \alpha_{k-1}} - \frac{L}{2\alpha_k} \right) \| \bx_{k} - \bx_{k-1} \|^2
		\end{equation}
		\STATE Set \(\eta_k = \min \{ \eta, \bar{\eta} \}\), take i.i.d. samples \(\cB_{k} = \{ \xi_k^1, \xi_k^2,\dots, \xi_k^{B_k} \}\), and  calculate \(\bg_k = \grad f _{\cB_k}^{\eta_k}(\bx_k)\)
		\STATE Update \(\bx\) by \begin{equation}\label{eq:ABSA-subproblem}
			\bx_{k+1} = \argmin_{\by \in \mR^d}  \fprod{\bg_k, \by - \bx_k} + r(\by) + \frac{1}{\alpha_k}D_\omega(\by,\bx_k)
            \vspace{-0.2cm}
		\end{equation}
		\ENDFOR
	\end{algorithmic}
\end{algorithm}

\begin{theorem}\label{thm:ABSA-noncvx}
	With the assumptions of \cref{lemma:prox-descent-lemma}, the sequence generated by \cref{alg:AB-prox} satisfies
	{\small\begin{align*}
		\mE\left[ \frac{\norm{\hat{\bx}_{R+1} - \bx_R}^2}{\alpha_R^2}\right]  \leq \frac{2\Psi(\bx_0) - 2\Psi^* + \alpha_0 h_b^2(\eta_0) +  \sum_{k\in \cK}\alpha_k h_b^2(\bar{\eta}) + \sum_{k=0}^{K-1} \alpha_k \sigma^2/B_k}{2 \sum_{j=0}^{K-1} (\alpha_j - L \alpha_j^2)},
	\end{align*}}
	where \(R\) is a random integer with the probability \(P(R = k) = \frac{\alpha_k - L\alpha_k^2}{\sum_{j=0}^{K-1} (\alpha_j - L\alpha_j^2)}\).
\end{theorem}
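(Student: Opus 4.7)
The plan is to apply the one-iteration descent lemma \cref{lemma:prox-descent-lemma} at each $k$, take conditional expectation with respect to $\cF_k$, sum from $k=0$ to $K-1$, and telescope. Because $\eta_k$ is $\cF_k$-measurable (it depends only on $\bx_k,\bx_{k-1}$ and the stepsizes), the standard bias--variance decomposition gives
\[
\mE[\|\grad f(\bx_k) - \bg_k\|^2 \mid \cF_k] \;\leq\; h_b^2(\eta_k) + \frac{h_v^2(\eta_k)}{B_k} \;\leq\; h_b^2(\eta_k) + \frac{\sigma^2}{B_k},
\]
by \cref{asp:estimator-bounds}.

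The crux of the argument is controlling the accumulated bias term $\tfrac{1}{2}\sum_{k=0}^{K-1}\alpha_k\mE[h_b^2(\eta_k)]$. I would split this into three cases. The $k=0$ term contributes the deterministic quantity $\tfrac{\alpha_0}{2}h_b^2(\eta_0)$. For $k\geq 1$ with $k\in\cK$, one has $\eta_k=\bar{\eta}$, and these terms collectively produce $\tfrac12 \mE\big[\sum_{k\in\cK}\alpha_k h_b^2(\bar{\eta})\big]$. For $k\geq 1$ with $k\notin\cK$, the adaptive rule \cref{eq:ABSA-condition} is active, and multiplying by $\alpha_k/2$ yields
\[
\frac{\alpha_k}{2}h_b^2(\eta_k)\;\leq\;\frac{1}{2}\left(\frac{1}{2\alpha_{k-1}} - \frac{L}{2}\right)\|\bx_k - \bx_{k-1}\|^2.
\]
This is exactly \emph{one half} of the negative $\|\bx_k - \bx_{k-1}\|^2$ coefficient produced by the descent lemma at iteration $k-1$. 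After re-indexing $j=k-1$ and summing, the case-(iii) contributions cancel precisely half of $\sum_{k=0}^{K-2}\big(\tfrac{1}{2\alpha_k} - \tfrac{L}{2}\big)\|\bx_{k+1}-\bx_k\|^2$; the leftover half, together with the $k=K-1$ boundary term, is non-positive under $\alpha_k\leq 1/(2L)$ and can be dropped.

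Telescoping $\Psi(\bx_{k+1})-\Psi(\bx_k)$ against $\Psi(\bx_0)-\Psi^*$ then leaves
\[
\sum_{k=0}^{K-1}\left(\frac{3}{2\alpha_k}-L\right)\mE[\|\hat{\bx}_{k+1}-\bx_k\|^2]\;\leq\; \Psi(\bx_0)-\Psi^* + \frac{\alpha_0}{2}h_b^2(\eta_0) + \frac{1}{2}\mE\Big[\sum_{k\in\cK}\alpha_k h_b^2(\bar{\eta})\Big] + \sum_{k=0}^{K-1}\frac{\alpha_k \sigma^2}{2B_k}.
\]
Multiplying both sides by $2$ and using the elementary inequality $\tfrac{3}{\alpha_k}-2L \geq 2\big(\tfrac{1}{\alpha_k}-L\big)$ rewrites the left side as $2\sum_k(\tfrac{1}{\alpha_k}-L)\mE[\|\hat{\bx}_{k+1}-\bx_k\|^2]$. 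Dividing through by $2\sum_{j=0}^{K-1}(\alpha_j-L\alpha_j^2)$ and observing that, with $P(R=k) = \tfrac{\alpha_k-L\alpha_k^2}{\sum_j(\alpha_j-L\alpha_j^2)}$, this quotient equals $\mE\big[\|\hat{\bx}_{R+1}-\bx_R\|^2/\alpha_R^2\big]$ delivers exactly the stated bound.

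The main obstacle, and the reason the constants in the adaptive rule are calibrated as in \cref{eq:ABSA-condition}, is the bookkeeping for case (iii): the factor $\tfrac{1}{2}$ in the absorption means only half of the descent lemma's step-norm term is neutralized, and one must confirm the remainder is non-positive under the stipulated stepsize rather than turning into an uncontrolled cross-term. The second mildly delicate point is producing the correct proportionality between $\tfrac{3}{2\alpha_k}-L$ (which emerges naturally from the descent lemma) and $\tfrac{1}{\alpha_k}-L$ (which is needed to match the denominator $\sum_j(\alpha_j-L\alpha_j^2)$ induced by the distribution of $R$); a looser comparison would weaken the final constants.
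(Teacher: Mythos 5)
Your proposal is correct and follows essentially the same route as the paper's proof: apply \cref{lemma:prox-descent-lemma}, split the accumulated bias into the $k=0$ term, the $k\in\cK$ terms, and the adaptive terms absorbed (via \cref{eq:ABSA-condition}) into exactly half of the preceding iteration's $\|\bx_k-\bx_{k-1}\|^2$ decrease, drop the remaining non-positive step-norm terms using $\alpha_k\leq 1/(2L)$, and normalize by the distribution of $R$ using $\tfrac{1}{\alpha_k}-L=\tfrac{\alpha_k-L\alpha_k^2}{\alpha_k^2}$. The only cosmetic difference is that the paper weakens $\tfrac{3}{2\alpha_k}-L$ to $\tfrac{1}{\alpha_k}-L$ by discarding the extra $\tfrac{1}{2\alpha_k}$ directly, which is the same elementary comparison you state.
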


\begin{corollary} \label{cor:ABSA-rate}
	By setting \(\alpha_k \equiv 1/(2L)\), \(h_b^2(\bar{\eta}) = \epsilon^2\), and \(B_k \equiv K\) the result in \cref{thm:ABSA-noncvx} can be written as
	{\small\begin{align*}
		\frac{1}{K}\sum_{k=0}^{K-1} \mE[\norm{\hat{\bx}_{k+1} - \bx_k}^2]  \leq \frac{4 \Psi(\bx_0) - 4 \Psi^* + h_b^2(\eta_0)  + \sigma^2}{2L^2 K}  + \rho \epsilon^2,
	\end{align*}}
 where \(\rho = {| \cK |}/{K} \leq 1\). Setting \(K=\epsilon^{-2}\), the sample complexity of the algorithm is \(\cO(\epsilon^{-4})\). Furthermore, the worst-case \(\eta\)- and \(\eta B\)-complexity are \(\cO(h_b^{-1}(\epsilon)\epsilon^{-2})\) and \(\cO(h_b^{-1}(\epsilon) \epsilon^{-4})\), respectively.
\end{corollary}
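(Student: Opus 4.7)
The plan is to derive the corollary as a direct specialization of \cref{thm:ABSA-noncvx} under the prescribed constant choices of stepsize, batch size, and bias tolerance, and then read off the three complexity measures by counting iterations and per-iteration costs.

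First I would substitute $\alpha_k \equiv 1/(2L)$ into the denominator of the bound in \cref{thm:ABSA-noncvx}, obtaining
\[
\sum_{j=0}^{K-1}\bigl(2\alpha_j - L\alpha_j^2\bigr) \;=\; \frac{K}{L} - \frac{K}{4L} \;=\; \frac{3K}{4L}.
\]
Because $\alpha_k$ is constant, the probability $P(R=k)=\frac{\alpha_k-L\alpha_k^2}{\sum_j(\alpha_j-L\alpha_j^2)}$ reduces to $1/K$, so
\[
\mE\!\left[\frac{\|\hat{\bx}_{R+1}-\bx_R\|^2}{\alpha_R^2}\right] \;=\; 4L^2\cdot\frac{1}{K}\sum_{k=0}^{K-1}\mE\!\left[\|\hat{\bx}_{k+1}-\bx_k\|^2\right].
\]
Plugging $h_b^2(\bar\eta)=\epsilon^2$ and $B_k\equiv K$ into the numerator and identifying $|\cK|/K$ with $\rho$, straightforward algebraic simplification yields a bound of the form $\cO\!\bigl((\Psi(\bx_0)-\Psi^*+h_b^2(\eta_0)+\sigma^2)/(L^2K)\bigr)+\rho\epsilon^2$; matching constants gives the stated inequality up to the scaling factor recorded in the corollary.

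Once the convergence bound is in hand, the complexity counts are routine. Setting $K=\epsilon^{-2}$ drives the first term to $\cO(\epsilon^2)$, so together with $\rho\epsilon^2 \leq \epsilon^2$ the measure of stationarity is $\cO(\epsilon^2)$ in expectation. The sample complexity is then $\sum_{k=0}^{K-1}B_k = K\cdot K = \cO(\epsilon^{-4})$. For the bias-control complexity, since $\eta_k\leq\bar\eta$ with $\bar\eta=h_b^{-1}(\epsilon)$ by construction and $h_b$ decreasing, the worst-case $\eta$-complexity is $\sum_{k}\eta_k \leq K\bar\eta = \cO(h_b^{-1}(\epsilon)\epsilon^{-2})$, and the $\eta B$-complexity is $\sum_k \eta_k B_k \leq K\bar\eta\cdot K = \cO(h_b^{-1}(\epsilon)\epsilon^{-4})$.

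I do not anticipate any genuine obstacle here: the corollary is purely a parameter-tuning exercise on top of \cref{thm:ABSA-noncvx}, and the only mildly delicate point is keeping track of the factor $4L^2$ that appears when converting between $\mE[\|\hat{\bx}_{R+1}-\bx_R\|^2/\alpha_R^2]$ and $(1/K)\sum_k\mE[\|\hat{\bx}_{k+1}-\bx_k\|^2]$, together with the observation that the random term $\sum_{k\in\cK}\alpha_k h_b^2(\bar\eta)$ becomes $\rho K\alpha\epsilon^2$ when averaged, which cleanly separates into the $\rho\epsilon^2$ summand in the displayed bound. All remaining steps are elementary algebra.
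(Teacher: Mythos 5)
Your proposal is correct and follows the same (and essentially only) route the paper intends: specialize \cref{thm:ABSA-noncvx} with constant $\alpha_k=1/(2L)$ so that $P(R=k)=1/K$, convert $\mE[\|\hat{\bx}_{R+1}-\bx_R\|^2/\alpha_R^2]$ to $4L^2\cdot\frac{1}{K}\sum_k\mE[\|\hat{\bx}_{k+1}-\bx_k\|^2]$, and count $\sum_k B_k$, $\sum_k\eta_k\leq K\bar\eta$, and $\sum_k\eta_kB_k\leq K^2\bar\eta$ with $\bar\eta=h_b^{-1}(\epsilon)$. The only discrepancy is in absolute constants (you read the theorem's denominator as $\sum_j(2\alpha_j-L\alpha_j^2)=3K/(4L)$ whereas the theorem's proof yields $2\sum_j(\alpha_j-L\alpha_j^2)=K/(2L)$), which you explicitly defer and which does not affect any of the stated complexity orders.
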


Similar to the analysis for the smooth problem, a high-probability guarantee on the number of comparisons in \eqref{eq:ABSA-condition} can be obtained with only an additional logarithmic factor, under either the bounded deviation assumption or by using the geometric median-of-means technique. Since the proof follows the same steps, we omit it and only report the result in the following corollary.

\begin{corollary}
For a fixed step size \(\alpha_k \equiv 1/(2L)\), define the stopping index and the reference index as
\begin{align*}
    j_k \triangleq  & \min\Big\{j\in \{0,1,\cdots, M-1\}:\ 1/L \  h_b(\eta_{k,j}) \le \big\| \bx_k - \bx_{k-1}\big\| - r_k\Big\}, \\
    j_k^*\triangleq  & \min\Big\{j\in \{0,1,\cdots, M-1\}:\ 1/L \ h_b(\eta_{k,j}) \le \big\| \hat{\bx}_k - \bx_{k-1}\big\| - 2r_k\Big\}.
\end{align*}
Set \(j_k=M\) or \(j_k^*=M\) if the corresponding sets are empty, where \(M\triangleq 1+\Big\lceil \log_{\rho}\!\big(\bar{\eta}/\eta_0\big)\Big\rceil\).
Under the bounded deviation assumption \eqref{eq:bounded_dev}, choose \(r_k \equiv  \epsilon\) and
\(
B_k \equiv \frac{2\hat{\sigma}^2}{\epsilon^2}\log\frac{2MK}{\delta}.
\)
Then \(j_k \le j_k^*\) holds for all \(k=1,\cdots,K-1\) with probability at least \(1-\delta\), and the sample complexity satisfies \(\sum B_k = \tilde{\cO}(\epsilon^{-4})\).
Under the bounded variance assumption, applying the geometric median-of-means estimator in \cref{cor:geometric-median} and setting
\( B_k \equiv \frac{121 \sigma^2}{ \epsilon^2}\log \frac{1.4MK}{\delta}\)
guarantees the same conclusion, with \(\sum B_k = \tilde{\cO}(\epsilon^{-4})\).
\end{corollary}

\subsection{Multistage proximal biased variance-reduced stochastic gradient method}
The AB-VSG algorithm can be adapted by substituting the gradient update with the proximal step. 
Nevertheless, the adaptive variant still suffers from the accumulated \(q(\eta)\) error. 
The analysis of the proximal extension of AB-VSG, which we named ProxAB-VSG in the numerical studies, mirrors that of the smooth case and is not discussed in this paper.
Alternatively, we introduce a new multi-stage proximal stochastic gradient method presented in \cref{alg:Multi-STORM}.
This is a double-loop algorithm: the inner loop consists of the biased AB-VSG with fixed \(\eta\) while the outer loop changes \(\eta\), takes new stochastic samples, and reruns the inner loop. 
Such a configuration \emph{avoids changing the bias level within the \(\bg\) update (inner loop)}, thereby excluding the error caused by \(q(\eta)\) which allows relaxation of \cref{asp:bounded-difference-variance}. 
Subsequent theorems provide the iteration and sample complexities of this algorithm for an appropriate sequence \(\{\eta_s\}\).

\begin{algorithm}[htb]
	\caption{Multi-stage proximal biased variance-reduced SG  (MProxB-VSG)}\label{alg:Multi-STORM}
	\begin{algorithmic}[1]
		\REQUIRE stepsize \( \alpha \), batch size \(\{ B_k^s \}\), bias parameter \(\{ \eta_s \}\), initial point {\(\bx_0\)},
		\FOR{\(s = 0, \cdots, S-1\)}
		\STATE Sample i.i.d samples \(\cB_0^s = \{ \xi_0^1, \xi_0^2,\dots \xi_0^{B_0^s} \}\), let \(\bg_0^s = \grad f_{\cB_0^s}^{\eta_s}(\bx_0^s)\)
		\FOR{\(k=0, \cdots, K-1\)}
		\STATE Update \(\bx\) by
            \vspace{-0.2cm}
		\begin{equation*}
			\bx_{k+1}^s = \argmin_{\by \in \mR^d}  \fprod{\bg_k^s, \by - \bx_k^s} + r(\by) + \frac{1}{\alpha}D_\omega(\by,\bx_k^s)
            \vspace{-0.2cm}
		\end{equation*}
		\STATE Sample i.i.d. samples \(\cB^s_{k+1} = \{ \xi_{k+1}^1, \xi_{k+1}^2,\dots, \xi_{k+1}^{B^s_{k+1}} \}\),  and update
		\begin{equation*}
			\bg^s_{k+1} =\grad f_{\cB_{k+1}^s}^{\eta_s}(\bx_{k+1}^s) + (1 - \beta_{k+1}) \left(\bg_k^s  - \grad f_{\cB_{k+1}^s}^{\eta_s}(\bx_k^s)\right)
		\end{equation*}
		\ENDFOR
		\STATE Set \(\bx_0^{s+1} = \bx_K^s\)
		\ENDFOR
	\end{algorithmic}
\end{algorithm}

\begin{theorem}\label{thm:Multi-STORM-noncvx}
	Under \cref{asp:bounded-difference-variance}, let \(\alpha_k \equiv  \alpha \leq \min \{ \frac{1}{4\sqrt{2}L^2},\frac{1}{2L} \}\), \(\beta_{k} \equiv 32  \alpha^2 L^2\).  Define \(T \triangleq  SK\). Then, setting \(K = T^{2/3}, S = T^{1/3}, \alpha = T^{-1/3}\) , and \(B_k^s =1 \) for all  \(k\geq 1\), the sequence generated by \cref{alg:Multi-STORM} satisfies
	{\small\begin{align*}
			     \frac{1}{T}\sum_{s=0}^{S-1} \sum_{k=0}^{K-1} \frac{3-2L\alpha}{2}\frac{\mE[\norm{\hat{\bx}^s_{k+1} - \bx^s_k}^2]}{\alpha^2}
	& \leq \frac{\Psi(\bx_0^0) - \Psi^* + 64L^2\sigma^2}{T^{2/3}}   & \hspace{-0.3cm} + \frac{1}{S}\sum_{s=0}^{S-1}\left(h_b^2(\eta_s) + \frac{\sigma^2}{32L^2B_0^s}\right).
	\end{align*}}
\end{theorem}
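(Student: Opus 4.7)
The key structural observation driving the plan is that within each outer-loop stage $s$, the bias parameter $\eta_s$ is \emph{held constant}, so the inner loop is essentially the AB-VSG recursion of \cref{alg:AB-Storm} applied to the fixed-bias gradient $\grad f^{\eta_s}$. I would therefore mimic the STORM-style analysis behind \cref{thm:AB-STORM-fixed-step-rate}, but measuring the variance error relative to $\grad f^{\eta_s}$ rather than $\grad f$. The $q(\eta_k)$ contamination in \cref{lem:ek-bound} comes from consecutive differences $\grad f^{\eta_{k+1}}_\xi - \grad f^{\eta_k}_\xi$ at the \emph{same} $\bx$, so it vanishes identically when the bias level does not change inside the inner loop; this is precisely why \cref{asp:bounded-difference-variance} is not needed here.

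Define the inner-loop variance error $\be_k^s \triangleq \bg_k^s - \grad f^{\eta_s}(\bx_k^s)$. First I would apply \cref{lemma:prox-descent-lemma} to the update $\bx_k^s \to \bx_{k+1}^s$ and decompose $\|\grad f(\bx_k^s)-\bg_k^s\|^2 \leq 2\|\be_k^s\|^2 + 2 h_b^2(\eta_s)$ via \cref{asp:estimator-bounds}. Separately, under \cref{asp:f-eta-smooth} with $\eta_{k+1}=\eta_k=\eta_s$ and $B_{k+1}^s=1$, the derivation behind \cref{lem:ek-bound} specializes to the clean recursion
\begin{align*}
\mE[\|\be^s_{k+1}\|^2 \mid \cF_k] \leq (1-\beta)^2 \|\be^s_k\|^2 + 2\beta^2\sigma^2 + 4L^2(1-\beta)^2\|\bx^s_{k+1}-\bx^s_k\|^2,
\end{align*}
with initial restart error $\mE\|\be^s_0\|^2 \leq \sigma^2/B_0^s$.

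Next I would form a Lyapunov sequence of the form $V_k^s \triangleq \Psi(\bx_k^s) + c\|\be_k^s\|^2/\beta$ and pick $c$ so that the $4L^2(1-\beta)^2\|\bx^s_{k+1}-\bx^s_k\|^2$ feedback from the error recursion is dominated by the negative $(1/(2\alpha)-L/2)\|\bx^s_{k+1}-\bx^s_k\|^2$ contribution in the descent bound. With $\beta=32\alpha^2L^2$, the stepsize choices $\alpha\leq\min\{1/(4\sqrt{2}L^2), 1/(2L)\}$ are exactly what make this cancellation go through while preserving the positive coefficient $(3-2L\alpha)/(2\alpha^2)$ on the stationarity measure $\|\hat{\bx}^s_{k+1}-\bx^s_k\|^2$. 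Summing the one-step Lyapunov decrease over $k=0,\ldots,K-1$ inside stage $s$, then summing over $s=0,\ldots,S-1$ and telescoping $\sum_s[\Psi(\bx_0^s)-\mE\Psi(\bx_K^s)] = \Psi(\bx_0^0)-\mE\Psi(\bx_K^{S-1})\leq \Psi(\bx_0^0)-\Psi^*$ via the restart rule $\bx_0^{s+1}=\bx_K^s$, yields a bound whose per-stage contributions are $\alpha K\,h_b^2(\eta_s)$ from the bias, an $\cO(\sigma^2/(\beta B_0^s))$ term from the restart, and an $\cO(\beta\sigma^2 K)$ term from the accumulated variance. Dividing by $T=SK$ and substituting $K=T^{2/3}$, $S=T^{1/3}$, $\alpha=T^{-1/3}$ (so $\alpha^2L^2K=L^2$) collapses these to the $\cO(T^{-2/3})$ rate together with the averaged terms $\tfrac{1}{S}\sum_s\bigl(h_b^2(\eta_s)+\sigma^2/(32L^2 B_0^s)\bigr)$ stated in the theorem.

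The main obstacle will be the delicate constant tracking: the Lyapunov weight $c$ must simultaneously absorb the quadratic feedback from the error recursion, leave enough negative weight on $\|\hat{\bx}^s_{k+1}-\bx^s_k\|^2$ to recover the prescribed factor $(3-2L\alpha)/2$ in the theorem, and yield the constant $64L^2\sigma^2$ in front of the $T^{-2/3}$ term. The residual $\sigma^2/(32L^2 B_0^s)$ in the final bound reflects the unavoidable loss of variance reduction at each restart of the inner loop, and is the reason the outer loop must be kept short ($S = T^{1/3}$) relative to the inner loop ($K = T^{2/3}$).
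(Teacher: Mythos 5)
Your proposal follows essentially the same route as the paper's proof: the same observation that fixing $\eta_s$ within a stage annihilates the $q(\cdot)$ term in \cref{lem:ek-bound}, the same decomposition $\frac{\alpha}{2}\|\grad f(\bx_k^s)-\bg_k^s\|^2\leq\alpha h_b^2(\eta_s)+\alpha\|\be_k^s\|^2$, the same Lyapunov function (your weight $c/\beta$ with $c=\alpha$ equals the paper's $\tfrac{1}{32L^2\alpha}$), and the same two-level telescoping via the restart rule $\bx_0^{s+1}=\bx_K^s$ followed by the parameter substitution. No substantive differences to report.
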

This result shows that if the predefined sequences \(\{ \eta_s \}\) and \(\{ B_0^s \}\) satisfy the requirement \( \frac{1}{S}\sum_{s=0}^{S-1}\left(h_b^2(\eta_s) + \frac{h_v^2(\eta_s)}{32L^2\abs{\cB_0^s}}\right) \leq \cO(\epsilon^2)\), then letting \(T =\cO( \epsilon^{-3})\), the generated sequence achieves \(\epsilon\)-stationarity. 
To illustrate this result, consider the simplest setting, where \(\eta_s \equiv \bar{\eta}\) such that \(h_b^2(\bar{\eta}) = \epsilon^2\) and \(cB_0^s = \cO(\epsilon^{-2})\), the sample complexity in terms of gradient oracles is
\[
	S\abs{\cB_0^s} + S(K-1) = \cO(\epsilon^{-1})\cO(\epsilon^{-2} ) + \cO(\epsilon^{-1})(\cO(\epsilon^{-2})-1) = \cO(\epsilon^{-3}).
\]
Furthermore, the \(\eta\)-complexity is obtained as 
\(
h^{-1}_b(\epsilon) SK = \cO(h^{-1}_b(\epsilon)\epsilon^{-3}),
\)
and the \(\eta B\)-complexity is 
\(
h^{-1}_b(\epsilon)(S\abs{\cB_0^s} + S(K-1) ) =  \cO(h^{-1}_b(\epsilon)\epsilon^{-3}).
\)

\vspace{-0.2cm}
\section{Numerical experiments}
This section presents the numerical studies on the three motivating examples. The stepsizes for \cref{alg:BSGD,alg:A-BSGD} and \cref{alg:AB-VSG} in the fixed stepsize regime are set to be the same. The parameters of the varying (diminishing) stepsize in \cref{alg:AB-VSG} are tuned separately from the other three. For the adaptation option (I or II) in \cref{alg:AB-VSG}, we choose Option I as there is no theoretical \(q(\eta)\) among all examples. The three experiments are all replicated 20 times to understand the performance variability. All codes are available at \href{https://github.com/Yin-LIU/biased-SGD}{github.com/Yin-LIU/biased-SGD}.

\subsection{Composition optimization}
Following our discussion in Example~1, we consider a two-level composition optimization problem defined as
\begin{equation*}
	F(\bx) = f(g(\bx)),
\end{equation*}
where \(f(\bu_1,\bu_2,\bu_3)  \triangleq  \bu_1^2 - \bu_2^2 + \bu_1\bu_2\bu_3 + \bu_3^2\), and \(g(\bx_1,\bx_2) \triangleq  \begin{bmatrix} \bx_1+3\bx_2 \\ 2\bx_1^2-\bx_2 \\ \bx_1^2 - \bx_1\bx_2^2\end{bmatrix}\).

We assume that we have access to the gradients and function values only through their stochastic oracles given as
\(\grad f_\varphi(\bu) \triangleq  \grad f(\bu) + \varphi\), \(\grad g_\zeta(\bx) \triangleq  \grad g(\bx) + \zeta \) and \(g_\zeta(\bx) \triangleq  g(\bx) + \zeta\), where \(\varphi\) and \(\zeta\) are random variables with normal distribution.
Furthermore, we define the gradient estimate as \(\grad F_\xi^\eta(\bx)\) with \( F^\eta_\xi(\bx) \triangleq   f_\varphi \left(\frac{1}{\eta}\sum_{i=1}^\eta g_{\zeta_i}(\bx)\right)\) and \(\xi \triangleq  \{ \varphi, \zeta_0, \cdots \zeta_\eta \}\) is the collection of random variables.
Since we do not have the explicit structures of \(h_b(\eta)\) and \(h_v(\eta)\), we will estimate both functions through simulation -- see~\cref{fig:simulated-bias-var}.
The results verify \cref{asp:estimator-bounds} that the bias is decreasing with increasing \(\eta\) and the variance is bounded (decreasing in this example). Indeed, in \cref{lem:power_decay_composition}, we theoretically show that the bias follows a power law decay.

\begin{figure}[hbt]
	\centering
	\hspace*{\fill}
	\begin{subfigure}[T]{0.4\textwidth}
		\includegraphics[width=\textwidth]{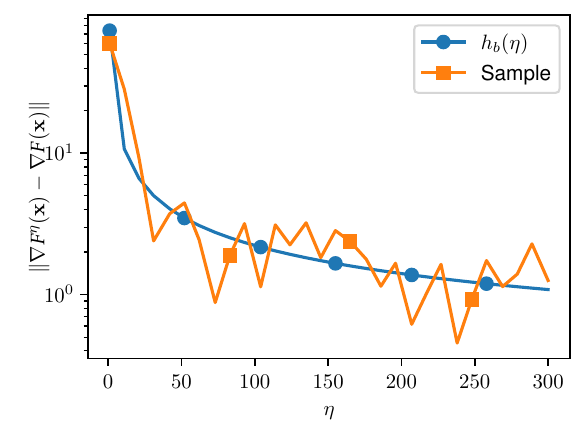}
		\caption{Bias of the stochastic gradient oracle as the function of \(\eta\)}
	\end{subfigure}
	\hfill
	\begin{subfigure}[T]{0.4\textwidth}
		\includegraphics[width=\textwidth]{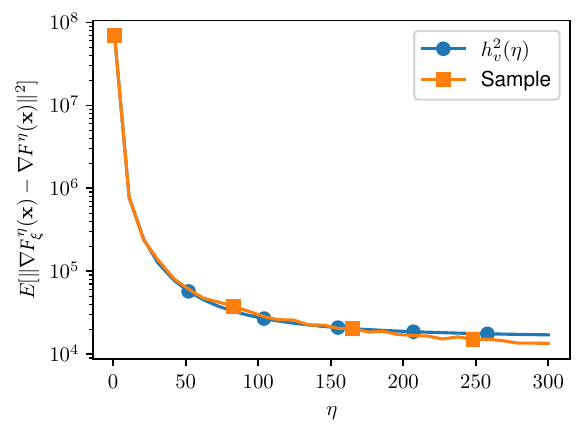}
		\caption{Variance of the stochastic gradient as the function of \(\eta\)}
	\end{subfigure}
	\hspace*{\fill}
	\caption{Estimating \(h_b(\eta)\) and \(h_v(\eta)\) for the composition optimization problem}
	\label{fig:simulated-bias-var}
 \vspace{-0.4cm}
\end{figure}

Having access to estimated \(h_b(\eta)\) and \(h_v(\eta)\) functions, we then apply all three algorithms to minimize the two-level composition problem, where we run both fixed and diminishing stepsizes in \cref{alg:AB-VSG}. We start all three algorithms from the same initial point. The \(\eta_k\) in \cref{alg:BSGD} and \(\bar{\eta}\) in \cref{alg:A-BSGD,alg:AB-VSG} are set to be 300. The iteration number \(K\) is set to be 1000 and the batch size \(B_k\) in \cref{alg:BSGD,alg:A-BSGD} are set to be equal to \(K\) and \(B_k\) in \cref{alg:AB-VSG} is set to be \(\sqrt{K}\), following the settings in \cref{cor:naive-mini-batch,cor:adaptive,cor:ABSTORM-minibatch}.

The performance of the three algorithms is illustrated in \cref{fig:composition-result}, where solid lines are the average of the 20 replications. For the sample complexity, the performance of the \cref{alg:BSGD,alg:A-BSGD} are rather similar while \cref{alg:AB-VSG} with different stepsize setting has better performance. This verifies the sample complexities of \(\cO(\epsilon^{-4})\) and \(\cO(\epsilon^{-3})\) derived in the corresponding corollaries. As expected \(\eta_k\) of \cref{alg:A-BSGD,alg:AB-VSG} is gradually increasing to 300 which results in lower per-iteration \(\eta\) compared to \cref{alg:BSGD}. We also present the performance of these algorithms for the \(\eta\)-complexity in \cref{fig:composition-eta}. Among all three algorithms, \cref{alg:AB-VSG} with the diminishing stepsize performs the best with respect to the objective function value while \cref{alg:A-BSGD} performs the best with respect to the norm of gradient.

\begin{figure*}[hbt]
	\centering
	\begin{subfigure}[b]{0.3\textwidth}
		\includegraphics[width=\textwidth]{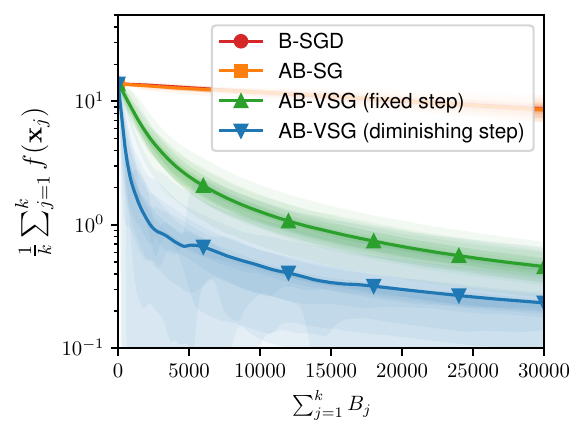}
		\caption{Function value}
	\end{subfigure}
	\hfill
	\begin{subfigure}[b]{0.3\textwidth}
		\includegraphics[width=\textwidth]{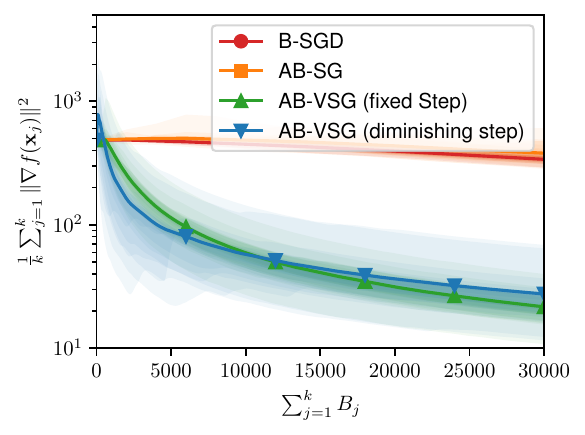}
		\caption{Gradient norm square}
	\end{subfigure}
	\hfill
	\begin{subfigure}[b]{0.3\textwidth}
		\includegraphics[width=\textwidth]{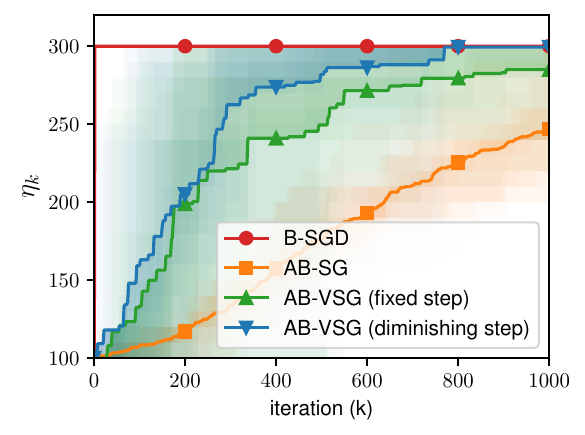}
		\caption{Bias parameter \(\eta_k\)}
	\end{subfigure}
	\caption{Performance of three algorithms for the composition optimization problem} 
	\label{fig:composition-result}
\end{figure*}

\begin{figure}[hbt]
	\centering
	\hspace*{\fill}
	\begin{subfigure}[T]{0.4\textwidth}
		\includegraphics[width=\textwidth]{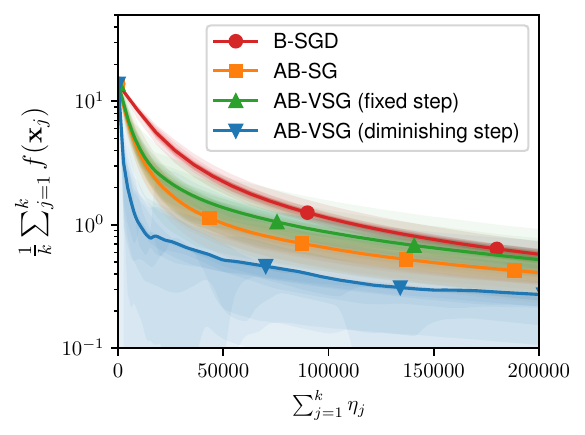}
		\caption{Function value}
	\end{subfigure}
	\hfill
	\begin{subfigure}[T]{0.4\textwidth}
		\includegraphics[width=\textwidth]{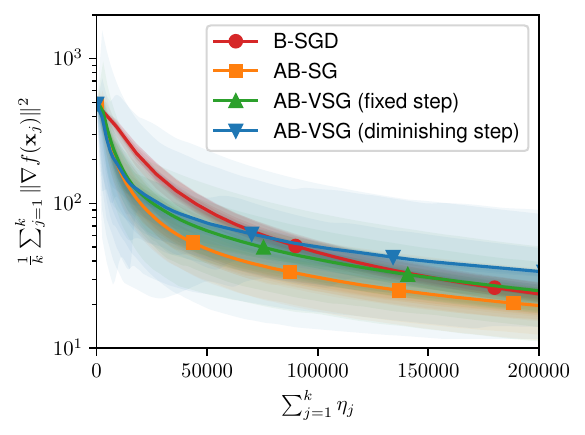}
		\caption{Gradient norm square}
	\end{subfigure}
	\hspace*{\fill}
	\caption{Comparison of the \(\eta\)-complexity of the three algorithms in the composition optimization problem}
	\label{fig:composition-eta}
 \vspace{-0.2cm}
\end{figure}

\subsection{Infinite-horizon Markov decision process}
Following our discussion in Example~2, in the second study, we consider the infinite-horizon MDP problem in which the horizon truncation length \(T\), i.e., the time at which the chains are truncated, is the bias control parameter \(\eta\) of the stochastic gradient.

In this experiment, we use the policy gradient method to solve the \emph{Pendulum problem} (\url{https://github.com/Farama-Foundation/Gymnasium}). The goal of the problem is to find a policy that swings the pendulum and keeps it standing vertically as long as possible. To conveniently evaluate \(\grad  f\), we set the maximum time horizon to be 300.
In this study, we use the Gaussian policy which takes the physical status of the pendulum (state) as the input and generates the action based on a Gaussian distribution where its mean is modeled by a neural network (NN), and its standard deviation is set to one. Policy optimization indeed acts on the parameters (weight matrices) of the NN. We use a 2-layer NN and the size of the hidden layers are all 10, and the activation functions are all ReLU. The batch sizes for \cref{alg:BSGD,alg:A-BSGD} are set to be 100 and the batch sizes for \cref{alg:AB-VSG} with two different step size settings are 50.
\begin{figure}[hbt]
	\centering
	\hspace*{\fill}
	\begin{subfigure}[T]{0.4\textwidth}
		\includegraphics[width=\textwidth]{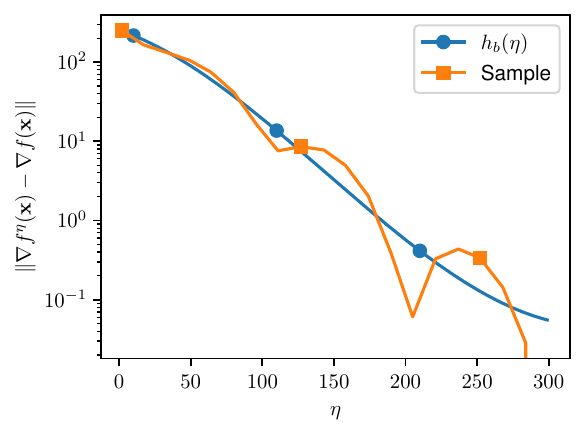}
		\caption{Bias of the stochastic gradient as the function of \(\eta\)}
	\end{subfigure}
	\hfill
	\begin{subfigure}[T]{0.4\textwidth}
		\includegraphics[width=\textwidth]{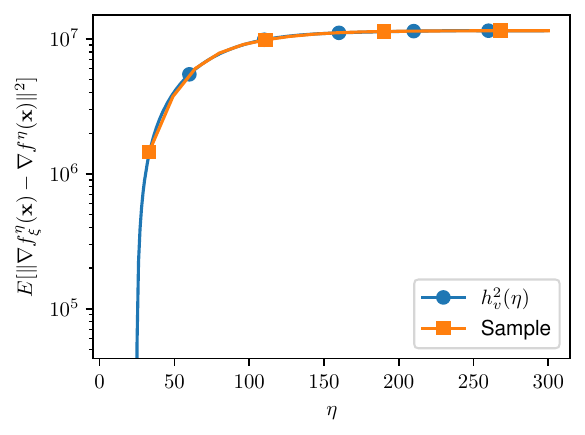}
		\caption{Variance of the stochastic gradient as the function of \(\eta\)}
	\end{subfigure}
	\hspace*{\fill}
	\caption{Estimating \(h_b(\eta)\) and \(h_v(\eta)\) through simulation for the infinite-horizon MDP}
	\label{fig:RL-simulated-bias-var}
\end{figure}

Similar to the composition optimization problem, we estimate \(h_b\) and \(h_v\) functions through simulation. However, unlike the previous example, the \(h_v^2(\eta)\) function is an increasing function of \(\eta\) (truncation time/length \(T\)), but yet bounded
-- see Figure~\ref{fig:RL-simulated-bias-var}. Hence \cref{asp:estimator-bounds} is also satisfied in this example.

\begin{figure*}[hbt]
	\centering
	\begin{subfigure}[b]{0.3\textwidth}
		\includegraphics[width=\textwidth]{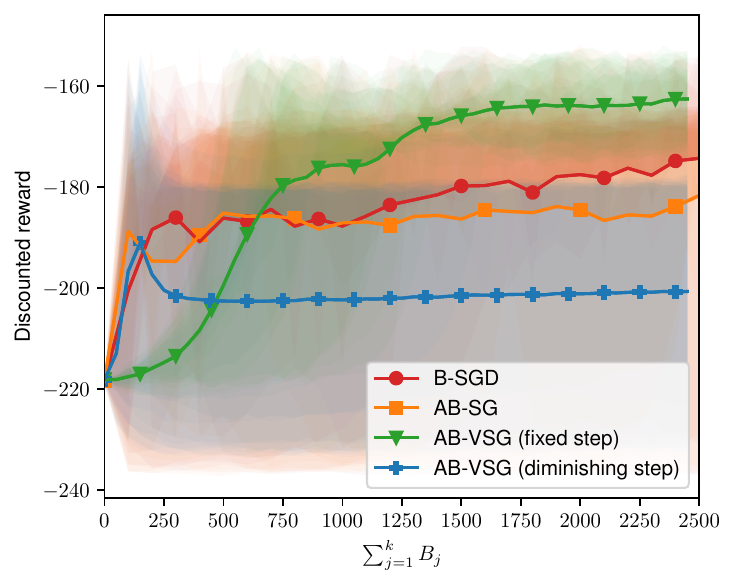}
		\caption{Discounted reward}
	\end{subfigure}
	\hfill
	\begin{subfigure}[b]{0.3\textwidth}
		\includegraphics[width=\textwidth]{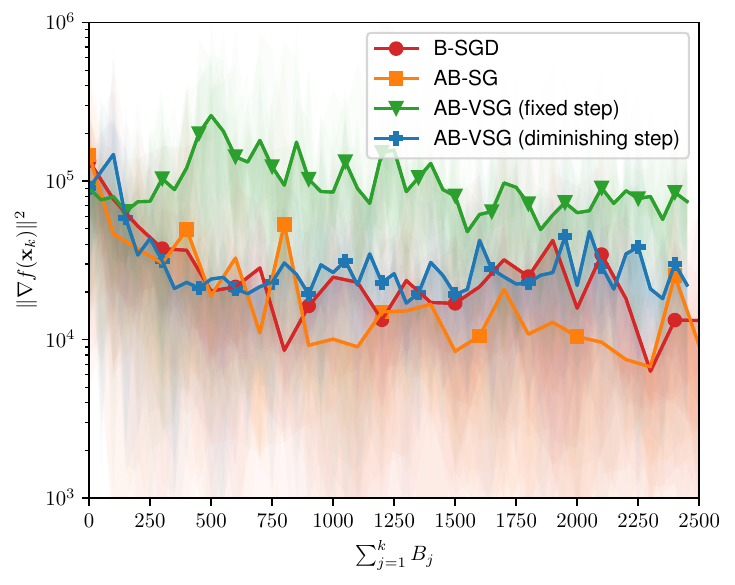}
		\caption{Gradient norm square}
	\end{subfigure}
	\hfill
	\begin{subfigure}[b]{0.3\textwidth}
		\includegraphics[width=\textwidth]{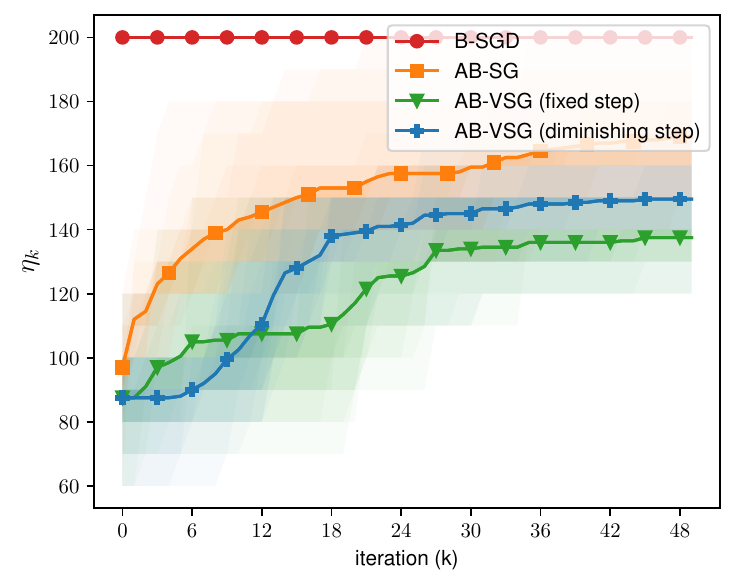}
		\caption{Bias parameter \(\eta_k\)}
	\end{subfigure}

	\caption{Performance of three algorithms for the infinite-horizon MDP problem}
	\label{fig:RL-result}
 \vspace{-0.2cm}
\end{figure*}

The performance of the three algorithms is illustrated in \cref{fig:RL-result}, where solid lines are the average of the 20 replications. We set the upper bound \(\bar{\eta}\) in \cref{alg:A-BSGD,alg:AB-VSG} to 200.
Considering the discounted reward vs. sample (plot (a)), and the \(\eta_k\) (truncation length) vs. iteration (plot (c)), \cref{alg:AB-VSG} with fixed step size has the best performance.
\cref{alg:BSGD} has a similar trend to \cref{alg:A-BSGD} but it requires significantly more \(\eta_k\) per-iteration. One observation is that \cref{alg:AB-VSG} with diminishing step size cannot improve the discounted reward as much as other algorithms. Finally, notice that since we cannot obtain the deterministic reward and gradient, we are using stochastic estimators to approximate these results. We conjecture that the gradient estimates are more sensitive to noise compared to the discounted rewards.

Finally, \cref{fig:RL-eta} presents discounted reward and norm square of the policy gradient vs. \(\sum_{i=1}^k\eta_i B_i\) to quantify the \(\eta B\)-complexity as motivated for this application. \cref{alg:BSGD} performs slightly better than \cref{alg:A-BSGD} with respect to the norm of the gradient. \cref{alg:AB-VSG} with fixed stepsize has the best overall performance.

\begin{figure}[hbt]
	\centering
	\hspace*{\fill}
	\begin{subfigure}{0.4\textwidth}
		\includegraphics[width=\textwidth]{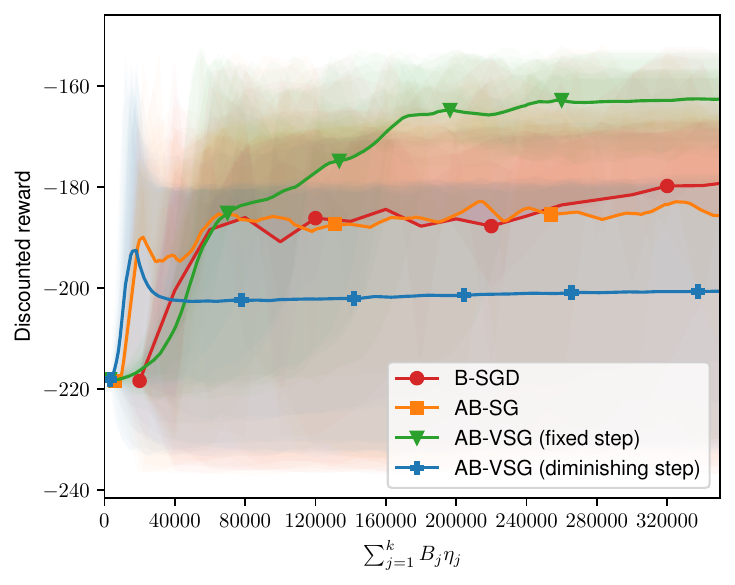}
		\caption{Discounted reward}
	\end{subfigure}
	\hfill
	\begin{subfigure}{0.4\textwidth}
		\includegraphics[width=\textwidth]{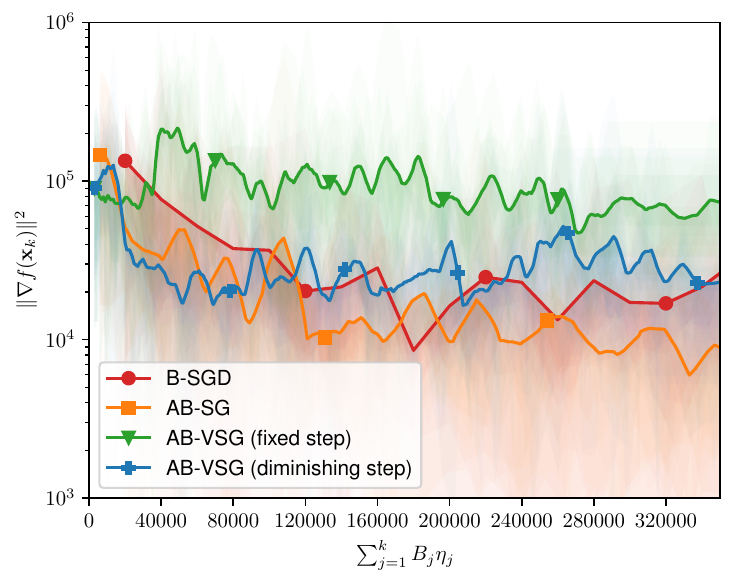}
		\caption{Policy gradient norm square}
	\end{subfigure}
	\hspace*{\fill}
	\caption{Comparison of the \(\eta B\)-complexity of the three algorithms for the infinite-horizon MDP problem}
	\label{fig:RL-eta}
\end{figure}

\subsection{Distributionally robust optimization}
In this section, we mainly aim to test the performance of the two proximal algorithms. Following \cite{levy2020LargeScaleMethods}, we consider a classification problem with the MNIST handwritten digit dataset with extra typed digits \footnote{Typed digits contaminate the original handwritten dataset and DRO can help improve the prediction performance under this setup.}. The goal is to learn a linear classifier that minimizes the expected loss under \(\chi^2\) divergence uncertainty set (see \eqref{eq:chi-square-DRO}) and the nonsmooth function in \eqref{eq:main} is the \(\ell_1\)-norm regularizer \(\norm{\bx}_1\) to induce sparsity.

The two proposed algorithms for the nonsmooth regime, i.e., \cref{alg:AB-prox} (proxAB-SG) and \cref{alg:Multi-STORM} (MProxB-VSG), are compared with the stochastic dual gradient method \cite{levy2020LargeScaleMethods}, primal-dual method \cite{Curi2020AdaptiveSamplingStochastic,Namkoong2016StochasticGradientMethods} and Multi-Level Monte Carlo (MLMC) method~\cite{levy2020LargeScaleMethods}. We also implement the proximal extension of AB-VSG, i.e., ProxAB-VSG, for a thorough comparison even though its analysis is not included in the paper. 

\cref{fig:DRO-bias-var} illustrates the fitted bias and variance functions where both functions follow the power law decay structure.

\begin{figure}[hbt]
	\centering
	\hspace*{\fill}
	\begin{subfigure}[T]{0.4\textwidth}
		\includegraphics[width=\textwidth]{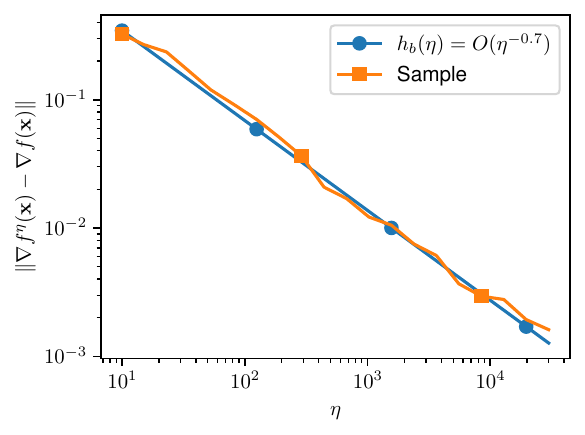}
		\caption{Bias of the stochastic gradient oracle as a function of control parameter \(\eta\)}
	\end{subfigure}
	\hfill
	\begin{subfigure}[T]{0.4\textwidth}
		\includegraphics[width=\textwidth]{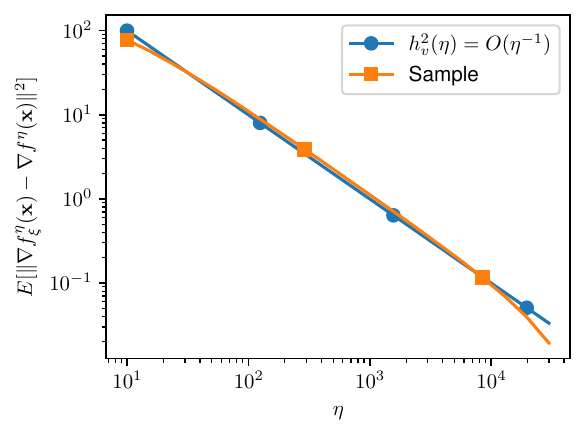}
		\caption{Variance of the stochastic gradient oracle as a function of control parameter \(\eta\)}
	\end{subfigure}
	\hspace*{\fill}
	\caption{Estimating \(h_b(\eta)\) and \(h_v(\eta)\) through simulation for the DRO}
	\label{fig:DRO-bias-var}
\end{figure}

The performances of these algorithms with respect to the \(\eta B\)-complexity are presented in \cref{fig:DRO-result}. Among all these algorithms, the proposed multistage proximal method \cref{alg:Multi-STORM} outperforms all others in both measures. Furthermore, both adaptive proximal methods outperform the three other algorithms, indicating that in practice, the adaptive methods require lower sampling efforts to control the bias.

\begin{figure}[hbt]
	\centering
	\centering
	\hspace*{\fill}
	\begin{subfigure}[T]{0.4\textwidth}
		\includegraphics[width=\textwidth]{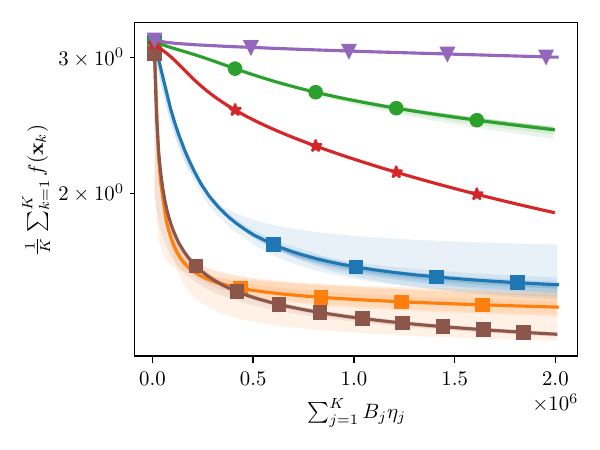}
		\caption{Function value}
	\end{subfigure}
	\hfill
	\begin{subfigure}[T]{0.4\textwidth}
		\includegraphics[width=\textwidth]{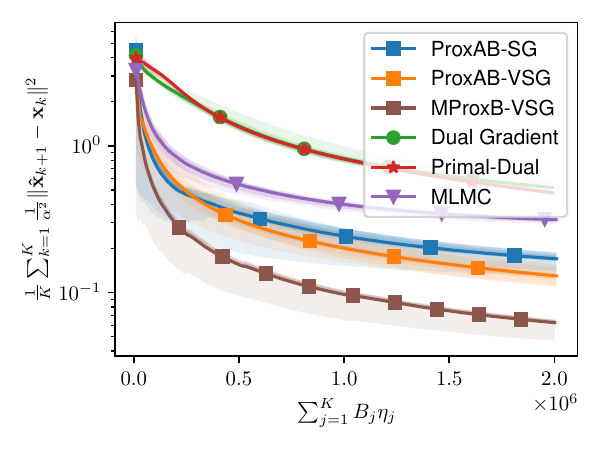}
		\caption{Stationarity measure}
	\end{subfigure}
	\hspace*{\fill}

	\caption{Comparison of the \(\eta B\)-complexity for the DRO problem on the MNIST dataset. The benchmark algorithms are Stochastic Dual Gradient method \cite{levy2020LargeScaleMethods}, Primal-Dual method \cite{Curi2020AdaptiveSamplingStochastic,Namkoong2016StochasticGradientMethods} and Multi-Level Monte Carlo (MLMC) method~\cite{levy2020LargeScaleMethods}.}
	\label{fig:DRO-result}
 \vspace{-0.2cm}
\end{figure}

\vspace{-0.2cm}
\section{Concluding remarks}
Motivated by several applications, this paper considers solving problems where unbiased stochastic estimates of the gradient of the objective (or its smooth component) are unavailable; however, the bias can be reduced by a control parameter (\(\eta\)) that relates to more computation or stochastic samples. To better quantify the complexity for different applications, besides the classical sample complexity (\(B\)-complexity), the paper introduces two new notions of \(\eta\)- and \(\eta B\)-complexity. We believe the two new notions of complexity provide a way to classify problems of different structures and, possibly, allow optimal algorithms for one problem structure to motivate optimal algorithms in other classes.

The paper proposes an adaptively biased stochastic gradient algorithm and its variance-reduced generalization to solve the smooth problem and analyzes their performance along the three complexity measures. Furthermore, the paper proposes an adaptively biased proximal stochastic gradient method and a multistage variance-reduced proximal method for nonsmooth problems in the composite form. Nonasymptotic analysis of these two algorithms is also provided for the same complexity measures.


\bibliographystyle{plain}
\bibliography{refs.bib}

\begin{appendices}

\section{Proofs of the lemmas for the motivating examples}
\textbf{\emph{Proof of \cref{lem:power_decay_composition}}.}
For the total error bound, we have
{\small\begin{align*}
	      & \mathbb{E}\left[\norm{\grad \bar{g}_\xi(\bx)\grad \bar{f}_\varphi(\bar{g}_\xi(\bx)) - \grad g(\bx)\grad f(g(\bx))}^2\right]                                                                                                                                                                                                                               \\
	\leq  & 2\mathbb{E}\left[\norm{\grad \bar{g}_\xi(\bx)\grad \bar{f}_\varphi(\bar{g}_\xi(\bx))  - \grad g(\bx)\grad \bar{f}_\varphi(\bar{g}_\xi(\bx))}^2\right]                                                                                        + 2\mathbb{E}\left[\norm{\grad g(\bx)\grad \bar{f}_\varphi(\bar{g}_\xi(\bx)) - \grad g(\bx)\grad f(g(\bx))}^2\right] \\
	\leq  & 2\mathbb{E}\left[\norm{\grad \bar{g}_\xi(\bx)-\grad g(\bx)}^2\norm{\grad \bar{f}_\varphi(\bar{g}_\xi(\bx)) }^2\right]                                                                                                                                                                                                                                   \\
	      & + 2\norm{\grad g(\bx)}^2  \mathbb{E}\left[\norm{\grad \bar{f}_\varphi(\bar{g}_\xi(\bx)) - \grad f(\bar{g}_\xi(\bx))+\grad f(\bar{g}_\xi(\bx)) - \grad f(g(\bx))}^2\right]                                                                                                                                                                                   \\
	\leq  & \frac{2C_f^2 C_g^2}{\abs{\cB_{\grad g}}} + 2C_g^2\mathbb{E}\left[\norm{\grad \bar{f}_\varphi(\bar{g}_\xi(\bx)) - \grad f(\bar{g}_\xi(\bx))}^2 \right] + 2C_g^2\mathbb{E}\left[\norm{\grad f(\bar{g}_\xi(\bx)) - \grad f(g(\bx))}^2 \right]                                                                                                                \\
	\quad & + 4C_g^2\mathbb{E}\left[\fprod{\grad \bar{f}_\varphi(\bar{g}_\xi(\bx)) - \grad f(\bar{g}_\xi(\bx)),\grad f(\bar{g}_\xi(\bx)) - \grad f(g(\bx))} \right]                                                                                                                                                                                                   \\
	\leq  & \frac{2C_f^2 C_g^2}{\abs{\cB_{\grad g}}} + \frac{2C_f^2C_g^2}{\abs{\cB_{\grad f}}} + 2C_g^2L_f^2 \mathbb{E}\left[\norm{\bar{g}_\xi(\bx) - g(\bx)}^2\right]                                                                                                                                                                                            \\
	\quad & + 4C_g^2 \mathbb{E}\left[\mathbb{E}[\fprod{\grad \bar{f}_\varphi(\bar{g}_\xi(\bx)) - \grad f(\bar{g}_\xi(\bx)),\grad f(\bar{g}_\xi(\bx)) - \grad f(g(\bx))} |\xi ] \right]                                                                                                                                                                                \\
	\leq  & \frac{2C_f^2 C_g^2}{\abs{\cB_{\grad g}}} + \frac{2C_f^2C_g^2}{\abs{\cB_{\grad f}}} + \frac{2C_g^2L_f^2\sigma_g^2}{\abs{\cB_{g}}}.
\end{align*}}
To prove the bound of the bias, we have
\begin{align*}
	& \norm{\mathbb{E}[\grad \bar{g}_\xi(\bx) \grad \bar{f}_\varphi(\bar{g}_\xi(\bx))]- \grad g(\bx)\grad f(g(\bx)) }^2\\
	=    & \norm{\mathbb{E}[\grad \bar{g}_\xi(\bx) \grad \bar{f}_\varphi(\bar{g}_\xi(\bx))- \grad \bar{g}_\xi(\bx) \grad \bar{f}_\varphi(g(\bx)) }^2                               \\
	\leq & \mathbb{E}\left[\norm{\grad \bar{g}_\xi(\bx) \grad \bar{f}_\varphi(\bar{g}_\xi(\bx))- \grad \bar{g}_\xi(\bx) \grad \bar{f}_\varphi(g(\bx))}^2\right]                    \\
	\leq & \mathbb{E}\left[\norm{\grad \bar{g}_\xi(\bx)}^2 \norm{\grad \bar{f}_\varphi(\bar{g}_\xi(\bx)) -\grad \bar{f}_\varphi(g(\bx)) }^2 \right]                              \\
	=    & \mathbb{E}_\xi\left[ \mathbb{E}_\varphi[ \norm{\grad \bar{g}_\xi(\bx)}^2 \norm{\grad \bar{f}_\varphi(\bar{g}_\xi(\bx)) -\grad \bar{f}_\varphi(g(\bx)) }^2|\xi]\right] \\
	\leq & \mathbb{E}_\xi\left[\norm{\grad \bar{g}_\xi(\bx)}^2  L_f^2 \norm{\bar{g}_\xi(\bx) - g(\bx)}^2 \right]  
	\leq \frac{C_g^2L_f^2\sigma_g^2}{\abs{\cB_g}}
\end{align*}
\hfill\qedsymbol

\noindent\textbf{\emph{Proof of \cref{lem:exp_decay_MDP}}.}
For the first part, see Lemma 11.7 in \cite{agarwal2019reinforcement}. For the second part, by definition, we have
{\small
\begin{align*}
	     & \norm{\tilde{\grad}J_T(\theta)-\tilde{\grad}J(\theta)}                                                                                                                                                                                      \\
	=    & \left\|\sum_{t=0}^T \gamma^t \left(\sum_{h=t}^T\gamma^{h-t}\hat{r}(s_h,a_h)\right)\grad \log\pi_\theta (a_t|s_t) - \sum_{t=0}^T \gamma^t \left(\sum_{h=t}^\infty \gamma^{h-t}\hat{r}(s_h,a_h)\right)\grad \log\pi_\theta (a_t|s_t)  \right.             \\
	     & + \left. \sum_{t=0}^T \gamma^t \left(\sum_{h=t}^\infty\gamma^{h-t}\hat{r}(s_h,a_h)\right)\grad \log\pi_\theta (a_t|s_t) -\sum_{t=0}^\infty \gamma^t \left(\sum_{h=t}^\infty\gamma^{h-t}\hat{r}(s_h,a_h)\right)\grad \log\pi_\theta (a_t|s_t)\right\|    \\
	\leq & \norm{\sum_{t=0}^T \gamma^t \left(\sum_{h=T+1}^\infty\gamma^{h-t}\hat{r}(s_h,a_h)\right)\grad \log\pi_\theta (a_t|s_t)} \\
 & + \norm{\sum_{t=T+1}^\infty \gamma^t \left(\sum_{h=t}^\infty\gamma^{h-t}\hat{r}(s_h,a_h)\right)\grad \log\pi_\theta (a_t|s_t)} \\
	\leq & \bar{r} \bar{c}\abs{\sum_{t=0}^T\frac{\gamma^{T+1}}{1-\gamma}} + \bar{r} \bar{c}\abs{\sum_{t=T+1}^\infty \frac{\gamma^t}{1-\gamma}}
	=     \bar{r} \bar{c}\left((1+T)\frac{\gamma^{T+1}}{1-\gamma} + \frac{\gamma^{T+1}}{(1-\gamma)^2} \right).
\end{align*}
}
Taking the expectation and using Jensen's inequality, we have proven \eqref{eq:bias_RL}.
\hfill\qedsymbol

\section{Proofs of the smooth regime}
\subsection{Proofs of the biased SGD}
First, we derive the descent lemma for the biased gradient estimator.
\begin{lemma}\label{A-lem:bised-descent-lemma}
	The sequence \(\{ \bx_k \}\) generated by \cref{alg:BSGD} satisfies
	{\small\begin{align*}
		\mE[ f(\bx_{k+1}) -  f(\bx_k)] \leq \mE &\left[-\frac{\alpha_k}{2}\norm{\grad  f(\bx_k)}^2  - \left( \frac{\alpha_k}{2} - \frac{L\alpha_k^2}{2} \right)\norm{\grad f^{\eta_k}(\bx_k)}^2 + \frac{\alpha_k}{2}h_b^2(\eta_k)\right.\\
  & \left. + \frac{L\alpha_k^2}{2B_k}h_v^2(\eta_k) \right].
	\end{align*}}
\end{lemma}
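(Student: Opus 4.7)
The plan is to apply the standard $L$-smoothness descent inequality to $f$ along the B-SGD update, take conditional expectation with respect to $\cF_k$, and then use the polarization identity to split the resulting cross term into the three pieces appearing on the right-hand side (true gradient norm, oracle gradient norm, and bias).

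More concretely, first I would invoke \cref{asp:f-infty-property} to write
\begin{equation*}
f(\bx_{k+1}) \leq f(\bx_k) + \fprod{\grad f(\bx_k), \bx_{k+1} - \bx_k} + \frac{L}{2}\|\bx_{k+1} - \bx_k\|^2,
\end{equation*}
and substitute $\bx_{k+1}-\bx_k = -\alpha_k \grad f^{\eta_k}_{\cB_k}(\bx_k)$. Next, taking $\mE[\cdot \mid \cF_k]$ (so that $\bx_k$ and $\eta_k$ are deterministic), the two key identities are $\mE[\grad f^{\eta_k}_{\cB_k}(\bx_k)\mid \cF_k] = \grad f^{\eta_k}(\bx_k)$ and, using independence of the $B_k$ samples in $\cB_k$ together with \cref{eq:var-bound},
\begin{equation*}
\mE[\|\grad f^{\eta_k}_{\cB_k}(\bx_k)\|^2 \mid \cF_k] \leq \|\grad f^{\eta_k}(\bx_k)\|^2 + \frac{h_v^2(\eta_k)}{B_k}.
\end{equation*}

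The cross term I would then handle via the polarization identity
\begin{equation*}
-\alpha_k\fprod{\grad f(\bx_k), \grad f^{\eta_k}(\bx_k)} = \tfrac{\alpha_k}{2}\|\grad f^{\eta_k}(\bx_k) - \grad f(\bx_k)\|^2 - \tfrac{\alpha_k}{2}\|\grad f(\bx_k)\|^2 - \tfrac{\alpha_k}{2}\|\grad f^{\eta_k}(\bx_k)\|^2,
\end{equation*}
and bound the first summand by $\tfrac{\alpha_k}{2}h_b^2(\eta_k)$ using \cref{eq:bias-bound}. Combining these pieces and taking total expectation yields exactly the stated inequality, where the $\|\grad f^{\eta_k}(\bx_k)\|^2$ contributions from the cross term and from the quadratic $L$-smoothness term assemble into the coefficient $(\alpha_k/2 - L\alpha_k^2/2)$.

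There is no real obstacle here: the only thing to be careful about is to apply the variance bound on the mini-batch average (hence the $1/B_k$ factor) rather than on a single sample, and to use the bias bound on $\grad f^{\eta_k}(\bx_k)$ (i.e.\ on the \emph{expected} oracle, not the sample). Everything else is routine bookkeeping.
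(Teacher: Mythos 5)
Your proposal is correct and follows essentially the same route as the paper: the $L$-smoothness descent inequality along the update, conditional expectation to replace the mini-batch gradient by $\grad f^{\eta_k}(\bx_k)$ plus the $h_v^2(\eta_k)/B_k$ variance term, and the polarization identity (which is exactly how the paper expands \cref{eq:bias-bound}) to convert the cross term into the bias, true-gradient, and oracle-gradient norm pieces. No gaps.
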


\begin{proof}
	\( f(\bx)\) is Lipschitz smooth, so
	\begin{align*}
		f(\bx_{k+1}) -  f(\bx_k) & \leq  \fprod{\grad  f(\bx_k), \bx_{k+1} - \bx_k} + \frac{L}{2}\norm{\bx_{k+1} - \bx_k}^2                                                 \\
		                         & =  - \alpha_k \fprod{\grad  f(\bx_k), \grad f^{\eta_k}_{\cB_k}(\bx_k)} + \frac{L \alpha_k^2}{2}\norm{\grad f^{\eta_k}_{\cB_k}(\bx_k)}^2,
	\end{align*}
	where the last equality comes from the update rule in the algorithm.
	Taking expectation conditioned on \(\cF_k\) on both sides, we have
	\begin{align*}
		\mathbb{E}[ f(\bx_{k+1})|\cF_k] -  f(\bx_k) \leq - \alpha_k\fprod{\grad  f(\bx_k), \grad f^{\eta_k}(\bx_k)} + \frac{L \alpha_k^2}{2} \mathbb{E}[\|\grad f^{\eta_k}_{\cB_k}(\bx_k)\|^2|\cF_k]. \numberthis \label{A-eq:lip-inequality}
	\end{align*}
	From \cref{eq:bias-bound}, we have \(\norm{\grad f^{\eta_k}(\bx_k)}^2 + \norm{\grad  f(\bx_k)}^2 - 2\fprod{\grad f^{\eta_k}(\bx_k),\grad  f(\bx_k)} \leq h_b^2(\eta_k)\), which is equivalent to
	\begin{align*}
		-\fprod{\grad f^{\eta_k}(\bx_k), \grad  f(\bx_k)} \leq \frac{1}{2} h_b^2(\eta_k) - \frac{1}{2}\norm{\grad f^{\eta_k}(\bx_k)}^2 - \frac{1}{2}\norm{\grad  f(\bx_k)}^2. \numberthis \label{A-eq:bias-inequality}
	\end{align*}
	Using \cref{eq:var-bound} and the equality
	\begin{align*}
		\mE_\xi[\| \grad f^{\eta_k}_{\cB_k}(\bx_k) \|^2] - \| \grad f^{\eta_k}(\bx_k) \| ^2 &= \mE_\xi [ \| \grad f^{\eta_k}_{\cB_k}(\bx_k)  - \grad f^{\eta_k}(\bx_k) \|^2] \\
  &= \frac{1}{B_k} \mE_\xi[ \| \grad f^{\eta_k}_{\xi}(\bx_k) - \grad f^{\eta_k}(\bx_k) \|^2],
	\end{align*}
	we have
	\begin{align*}
		\mE[ \| \grad f^{\eta_k}_{\cB_k}(\bx_k) \|^2] \leq \frac{h_v^2(\eta_k)}{B_k} + \| \grad f^{\eta_k}(\bx_k) \|^2. \numberthis \label{A-eq:var-inequality}
	\end{align*}
	Inserting \cref{A-eq:bias-inequality,A-eq:var-inequality} into \cref{A-eq:lip-inequality}, taking expectation on \(\cF_k\), by the tower property, we will have the result.
\end{proof}

\noindent\textbf{\emph{Proof of \cref{thm:NBSGD-rate}}.}
Since \(\alpha_k \leq 1/L\), from \cref{A-lem:bised-descent-lemma}, we have
	{\small\begin{align*}
		\mE[ f(\bx_{k+1}) -  f(\bx_k)] \leq -\frac{\alpha_k}{2} \mE \left[\norm{\grad  f(\bx_k)}^2 \right] + \frac{\alpha_k}{2}h_b^2(\eta_k) + \frac{L\alpha_k^2}{2B_k}h_v^2(\eta_k).
	\end{align*}}
	Telescoping the above inequality from \(k=1\) to \(K\), rearranging the terms, we have
	{\small\begin{align*}
		\sum_{k=1}^K \alpha_k  \mE \left[\norm{\grad  f(\bx_k)}^2\right] \leq 2( f(\bx_1) - f^*) + \sum_{k=1}^K \alpha_k h_b^2(\eta_k) + L \sum_{k=1}^K \frac{\alpha_k^2}{B_k} h_v^2(\eta_k).
	\end{align*}}
	Divide both sides by \(\sum_{j=1}^K \alpha_j\) and define a random integer \(R \in \{ 1, \cdots, K \}\) with the probability distribution
	\begin{equation}\label{A-eq:R-distribution}
		P(R = k) = \frac{\alpha_k}{\sum_{j=1}^K \alpha_j} ,
	\end{equation}
	we have the final result.
\hfill\qedsymbol

\subsection{Proof of the AB-SG method}
\noindent\textbf{\emph{Proof of \cref{thm:A-BSGD-rate}}.}
Notice that the condition \(h_b^2(\eta_k) \leq \frac{1}{2}\| \grad f^{\eta_k}_{\cB_k}(\bx_k) \|^2\) makes \(\eta_k\) and \(\cB_k\) to be two dependent random variables, hence, we need to be careful with the expectation. We will use the tower property on conditional expectation, which is
	\begin{align*}
		\mE[\cdot | \cF_k] = \mE \Big[ \mE[\cdot | \cF_k,\eta_k] | \cF_k \Big].
	\end{align*}
	Then, \cref{A-eq:lip-inequality} becomes
	\begin{align*}
		     & \mE[ f(\bx_{k+1}) | \cF_k] -  f(\bx_k)                                                                                                                                                                                                          \\
		\leq & - \alpha_k \mE[\fprod{\grad  f(\bx_k) , \grad f^{\eta_k}(\bx_k)} | \cF_k] + \frac{L\alpha_k^2}{2}\mE[\| \grad f^{\eta_k}_{\cB_k}(\bx_k) \|^2 | \cF_k]                                                                                           \\
		\leq & - \frac{\alpha_k}{2}\| \grad  f(\bx_k) \|^2 + \mE \left[\frac{\alpha_k}{2} h_b^2(\eta_k)  + \frac{\alpha_k}{2B_k}h_v^2(\eta_k) - \left(\frac{\alpha_k}{2}- \frac{L\alpha_k^2}{2}\right)\| \grad f^{\eta_k}_{\cB_k}(\bx_k) \|^2 | \cF_k \right].
	\end{align*}
	{From \cref{alg:A-BSGD}, we define the set \(\cK \triangleq  \{ k \in [K]: \eta_k = \bar{\eta} \} \). Then if \(k \notin \cK\), we have \(h_b^2(\eta_k) \leq \frac{1}{2}\norm{\grad f_{\cB_k}^{\eta_k}}\).}
	Given these two sets,
	\cref{A-lem:bised-descent-lemma} can be modified as
	{\small\begin{align*}
		     & \mE[ f(\bx_{k+1}) -  f(\bx_k)]                                                                                                                                                                                                                \\
		\leq & \mE \left[ - \frac{\alpha_k}{2}\| \grad  f(\bx_k) \|^2 - \mI_{k { \notin \cK}}\left(\frac{\alpha_k}{4} -  \frac{L \alpha_k^2}{2}\right) \| \grad f^{\eta_k}_{\cB_k}(\bx_k) \|^2  + \mI_{k\in \cK} \frac{\alpha_k}{2}h_b^2(\bar{\eta}) \right. \\
		     & \quad  \left. - \mI_{k\in \cK} \left(\frac{\alpha_k}{2} - \frac{L\alpha_k^2}{2}\right)\| \grad f^{\eta_k}_{\cB_k} \|^2 + \frac{\alpha_k}{2B_k}h_v^2(\eta_k) \right]                                                                           \\
		\leq & \mE \left[- \frac{\alpha_k}{2}\| \grad  f(\bx_k)\|^2 + \mI_{k\in \cK} \frac{\alpha_k}{2} h_b^2(\bar{\eta}) + \frac{\alpha_k}{2B_k}h_v^2(\eta_k)\right],
	\end{align*}}
	where the last inequality uses the fact \(\alpha_k \leq 1/2L\) and \(\mI_{k\in \cK}\) denotes the indicator function of the set \(\cK\), i.e., \(\mI_{k\in \cK}=1\) if \(k\in\cK\) and \(0\), otherwise.
	Telescoping the above inequality from \(k=1\) to \(K\), and rearranging the terms, we get
	{\small\begin{align*}
		\sum_{k=1}^K \alpha_k \mE \left[ \| \grad  f(\bx_k) \|^2 \right] & \leq 2 ( f(\bx_1) - f^*) + \mE\left[ \sum_{k\in \cK}\alpha_k h_b^2(\bar{\eta}) + \sum_{k=1}^K\frac{\alpha_k}{B_k}h_v^2(\eta_k) \right].
	\end{align*}}
	Dividing both sides by \(\sum_{j=1}^K\alpha_j\) and using \eqref{A-eq:R-distribution}, we obtain the desired result.
\hfill\qedsymbol

\subsection{Proofs of the AB-VSG method}
One difference of \cref{alg:AB-VSG} is that \(\mE[\bg_k] \neq \grad f^{\eta_k}(\bx_k)\), hence we need a new descent lemma that considers the extra error term \(\norm{\bg_k - \grad f^{\eta_k}(\bx_k)}^2\).

\begin{lemma}\label{A-lem:Storm-biased-descent-lemma}
	Define \(\be_k \triangleq  \bg_k - \grad f^{\eta_k}(\bx_k)\), under \cref{asp:f-infty-property,asp:estimator-bounds} , the sequence \(\{ \bx_k \}\), \(\{ \eta_k \}\) generated by the \cref{alg:AB-VSG} satisfies
	{\small\begin{align}
		f(\bx_{k+1}) -  f(\bx_k)\leq - \frac{\alpha_k}{2}\|\grad  f(\bx_k)\|^2 - \left(\frac{\alpha_k}{2} - \frac{L \alpha_k^2}{2} \right) \| \bg_k \|^2 + \alpha_k h_b^2(\eta_k) + \alpha_k\|\be_k\|^2.
	\end{align}}
\end{lemma}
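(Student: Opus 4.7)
The plan is to run the standard descent-lemma argument on $f$ using the Lipschitz-gradient assumption, and then carefully split the mismatch between $\grad f(\bx_k)$ and the momentum-based estimate $\bg_k$ into (i) the bias of the controllable oracle $\grad f^{\eta_k}(\bx_k) - \grad f(\bx_k)$, governed by $h_b^2(\eta_k)$ via Assumption~\ref{asp:estimator-bounds}, and (ii) the estimator error $\be_k = \bg_k - \grad f^{\eta_k}(\bx_k)$, left as is. Unlike the B-SGD descent lemma (\cref{A-lem:bised-descent-lemma}), here I cannot take conditional expectation to kill a cross term, because $\mE[\bg_k] \neq \grad f^{\eta_k}(\bx_k)$ in the momentum-based update \eqref{eq:gk-update}; hence the argument must be deterministic in $\bg_k$.

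First, by \cref{asp:f-infty-property} and the update $\bx_{k+1} = \bx_k - \alpha_k \bg_k$,
\begin{equation*}
f(\bx_{k+1}) - f(\bx_k) \leq -\alpha_k \langle \grad f(\bx_k), \bg_k\rangle + \tfrac{L\alpha_k^2}{2}\|\bg_k\|^2.
\end{equation*}
Next, I would use the three-point identity
\begin{equation*}
-\langle \grad f(\bx_k), \bg_k\rangle = \tfrac{1}{2}\|\grad f(\bx_k) - \bg_k\|^2 - \tfrac{1}{2}\|\grad f(\bx_k)\|^2 - \tfrac{1}{2}\|\bg_k\|^2,
\end{equation*}
which is exactly tailored to produce the desired $-\tfrac{\alpha_k}{2}\|\grad f(\bx_k)\|^2$ and $-\tfrac{\alpha_k}{2}\|\bg_k\|^2$ terms on the right-hand side after multiplying by $\alpha_k$.

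The key manipulation is then to bound $\|\grad f(\bx_k) - \bg_k\|^2$. Writing $\grad f(\bx_k) - \bg_k = \bigl(\grad f(\bx_k) - \grad f^{\eta_k}(\bx_k)\bigr) - \be_k$ and applying $\|u+v\|^2 \leq 2\|u\|^2 + 2\|v\|^2$, I get
\begin{equation*}
\|\grad f(\bx_k) - \bg_k\|^2 \leq 2\|\grad f(\bx_k) - \grad f^{\eta_k}(\bx_k)\|^2 + 2\|\be_k\|^2 \leq 2 h_b^2(\eta_k) + 2\|\be_k\|^2,
\end{equation*}
using \eqref{eq:bias-bound} (note that $\grad f^{\eta_k}(\bx_k) = \mE_\xi[\grad f^{\eta_k}_\xi(\bx_k)]$, so this bias bound holds pointwise). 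Substituting back into the three-point identity and combining with the smoothness inequality yields the claimed descent bound.

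The only mild obstacle is the factor of $2$ that appears from the squared-norm splitting; it cancels cleanly with the $\tfrac{1}{2}$ in the three-point identity, which is why the constants in front of $h_b^2(\eta_k)$ and $\|\be_k\|^2$ both end up as $\alpha_k$ (rather than $\alpha_k/2$). No expectation is taken at this stage, so the result is pathwise and can be freely combined later with \cref{lem:ek-bound}, which controls $\mE[\|\be_k\|^2]$, when proving \cref{thm:AB-STORM-fixed-step-rate}.
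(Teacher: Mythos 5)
Your proposal is correct and follows essentially the same route as the paper's proof: the Lipschitz-smoothness descent inequality, the polarization identity $-\langle \grad f(\bx_k), \bg_k\rangle = \tfrac{1}{2}\|\grad f(\bx_k)-\bg_k\|^2 - \tfrac{1}{2}\|\grad f(\bx_k)\|^2 - \tfrac{1}{2}\|\bg_k\|^2$, and then Young's inequality to split $\|\grad f(\bx_k)-\bg_k\|^2$ into the bias term bounded by $h_b^2(\eta_k)$ and $\|\be_k\|^2$. Your observation that the argument is pathwise (no conditional expectation needed, since \eqref{eq:bias-bound} holds for each fixed $\eta$ and $\bx$) is also consistent with how the paper uses the lemma.
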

\begin{proof}
	Since \( f(\bx)\) is Lipschitz smooth, we have
	{\small\begin{align*}
		     & f(\bx_{k+1}) -  f(\bx_k)                                                                                                                                                                                                         \\
		\leq & -\alpha_k\fprod{\grad  f(\bx_k), \bg_k} + \frac{L\alpha_k^2}{2}\| \bg_k \|^2                                                                                                                                                         \\
		=    & -\frac{\alpha_k}{2}\| \grad  f(\bx_k) \|^2 - \left(\frac{\alpha_k}{2} - \frac{L\alpha_k^2}{2} \right) \| \bg_k\|^2 + \frac{\alpha_k}{2}\| \grad  f(\bx_k) - \bg_k \|^2                                                               \\
		\leq & -\frac{\alpha_k}{2}\| \grad  f(\bx_k) \|^2 - \left(\frac{\alpha_k}{2} - \frac{L\alpha_k^2}{2} \right) \| \bg_k\|^2 + \alpha_k \| \grad  f (\bx_k) - \grad f^{\eta_k}(\bx_k) \|^2   + \alpha_k \|\grad f^{\eta_k}(\bx_k) - \bg_k \|^2 \\
		\leq & - \frac{\alpha_k}{2}\|\grad  f(\bx_k)\|^2 - \left(\frac{\alpha_k}{2} - \frac{L \alpha_k^2}{2} \right) \| \bg_k \|^2 + \alpha_k h_b^2(\eta_k) + \alpha_k\|\be_k\|^2,
	\end{align*}}
	where the second inequality uses Young's inequality, and the last inequality uses \eqref{eq:bias-bound} and the definition of \( \| \be_k \| \).
\end{proof}

Notice that in \cref{asp:bounded-difference-variance}, the difference between two bias levels is exactly 1, while in \cref{alg:AB-VSG} the difference between \(\eta_{k+1}\) and \(\eta_k\) is arbitrary.
Hence, we need the following remark to bound the variance.
\begin{remark}\label{A-rmk:increamental-variance-bound}
	By Jensen's inequality, for any two different \(\eta < \tilde{\eta}\), we have
	{\small\begin{align*}
		\mE_\xi [\| \grad f^{\eta}_\xi(\bx) - \grad f^{\tilde{\eta}}_\xi(\bx) - \grad f^{\eta}(\bx) + \grad f^{\tilde{\eta}}(\bx)\|^2 ] 		\leq  (\tilde{\eta} - \eta) \sum_{ i = \eta}^{\tilde{\eta} -1} q(i) .
	\end{align*}}
\end{remark}

\begin{lemma} \label{A-lem:incremental-variance} Under \cref{asp:f-eta-smooth,asp:bounded-difference-variance}, the sequences \(\{ \eta_k \}\) and \(\{ \bx_k \}\) generated by the Algorithm \ref{alg:AB-VSG} satisfy
	{\small\begin{align*}
		     & \mE \left[\|\grad f^{\eta_{k+1}}_{\cB_{k+1}}(\bx_{k+1}) -  \grad f^{\eta_{k}}_{\cB_{k+1}}(\bx_k) - \grad f^{\eta_{k+1}}(\bx_{k+1}) +  \grad f^{\eta_{k}}(\bx_k) \|^2 | \cF_k\right] \\
		\leq & \frac{2L^2}{B_{k+1}}\|\bx_{k+1} - \bx_k\|^2 + \frac{2\bar{\eta}^2}{B_{k+1}} q(\eta_k).
	\end{align*}}
\end{lemma}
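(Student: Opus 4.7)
My plan is to first reduce the mini-batch quantity to a single-sample variance via the i.i.d.\ property of $\cB_{k+1}$, then split the single-sample difference into a ``change-in-$\bx$'' piece and a ``change-in-$\eta$'' piece, and finally bound each using \cref{asp:f-eta-smooth} together with the telescoping bound from \cref{A-rmk:increamental-variance-bound} (which itself rests on \cref{asp:bounded-difference-variance}).

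For the first step, I would observe that both $\bx_{k+1}$ and $\eta_{k+1}$ are $\cF_k$-measurable, since the updates in \cref{alg:AB-Storm} determine them from information up to and including $\cB_k$, while the samples forming $\cB_{k+1}$ are i.i.d.\ and independent of $\cF_k$. Defining $D_\xi \triangleq \grad f^{\eta_{k+1}}_\xi(\bx_{k+1}) - \grad f^{\eta_k}_\xi(\bx_k)$, the conditional mean is $\mE[D_\xi \mid \cF_k] = \grad f^{\eta_{k+1}}(\bx_{k+1}) - \grad f^{\eta_k}(\bx_k)$, and the standard $1/B_{k+1}$ scaling of the variance of an average of i.i.d.\ centered vectors reduces the problem to bounding $\mE[\|D_\xi - \mE[D_\xi\mid\cF_k]\|^2 \mid \cF_k]$.

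Next I would insert the intermediate term $\grad f^{\eta_{k+1}}_\xi(\bx_k)$ to split $D_\xi = A_\xi + B_\xi$, where $A_\xi \triangleq \grad f^{\eta_{k+1}}_\xi(\bx_{k+1}) - \grad f^{\eta_{k+1}}_\xi(\bx_k)$ is the ``change-in-$\bx$'' piece at the fixed bias level $\eta_{k+1}$, and $B_\xi \triangleq \grad f^{\eta_{k+1}}_\xi(\bx_k) - \grad f^{\eta_k}_\xi(\bx_k)$ is the ``change-in-$\eta$'' piece at the fixed point $\bx_k$. Applying $\|u+v\|^2 \leq 2\|u\|^2 + 2\|v\|^2$ to $(A_\xi - \mE A_\xi) + (B_\xi - \mE B_\xi)$ yields two separate variance terms. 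For $A_\xi$, upper bounding the variance by the second moment and applying \cref{asp:f-eta-smooth} gives $L^2 \|\bx_{k+1}-\bx_k\|^2$. For $B_\xi$, I would invoke \cref{A-rmk:increamental-variance-bound} with endpoints $\eta_k$ and $\eta_{k+1}$, then use $\eta_{k+1} - \eta_k \leq \bar{\eta}$ together with the (implicit) monotonicity of $q$ to collapse the resulting telescoping bound into $\bar{\eta}^2 q(\eta_k)$. Combining the two pieces and dividing by $B_{k+1}$ produces the claimed inequality.

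The main obstacle is the randomness of the bias gap $\eta_{k+1} - \eta_k$, which can be an arbitrary integer (up to $\bar{\eta}$) rather than unity, so \cref{asp:bounded-difference-variance} cannot be applied directly and the telescoping Jensen argument of \cref{A-rmk:increamental-variance-bound} is essential. A smaller but noteworthy subtlety is ensuring that $\bx_{k+1}$ and $\eta_{k+1}$ are treated as $\cF_k$-measurable so they may be held fixed under the conditional expectation; this exploits the specific convention $\cF_k = \sigma(\cB_1,\dots,\cB_k)$ adopted for \cref{alg:AB-Storm}.
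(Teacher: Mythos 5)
Your proposal is correct and follows essentially the same route as the paper: the same split via the intermediate term $\grad f^{\eta_{k+1}}_\xi(\bx_k)$, Young's inequality, the bound $\mE[\|X-\mE X\|^2]\leq \mE[\|X\|^2]$ with \cref{asp:f-eta-smooth} for the change-in-$\bx$ piece, and \cref{A-rmk:increamental-variance-bound} with monotonicity of $q$ to get $\bar{\eta}^2 q(\eta_k)$ for the change-in-$\eta$ piece. The only difference is that you factor out the $1/B_{k+1}$ mini-batch reduction before splitting rather than after, which is immaterial.
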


\begin{proof}
	By \cref{alg:AB-VSG}, the sequence \(\{ \eta_k \}\) is non-decreasing and upper bounded by \(\bar{\eta}\), hence from \cref{A-rmk:increamental-variance-bound} we have the bound
	{\small\begin{align}\label{A-eq:sum-q-bound}
		(\eta_{k+1} - \eta_{k})\sum_{i=\eta_k}^{\eta_{k+1}-1}q(i) \leq  (\eta_{k+1} - \eta_{k})^2 q(\eta_k) \leq \bar{\eta}^2 q(\eta_k).
	\end{align}}
	By Young's inequality, we have
	{\small\begin{align*}
		     & \mE\left[\|\grad f^{\eta_{k+1}}_{\cB_{k+1}}(\bx_{k+1}) -  \grad f^{\eta_{k}}_{\cB_{k+1}}(\bx_k) - \grad f^{\eta_{k+1}}(\bx_{k+1}) +  \grad f^{\eta_{k}}(\bx_k) \|^2 | \cF_k \right]              \\
		\leq & 2 \mE \left[\| \grad f^{\eta_{k+1}}_{\cB_{k+1}}(\bx_{k+1}) - \grad f^{\eta_{k+1}}_{\cB_{k+1}}(\bx_{k})  - \grad f^{\eta_{k+1}}(\bx_{k+1}) +  \grad f^{\eta_{k+1}}(\bx_{k})  \|^2 | \cF_k \right] \\
		     & + 2\mE \left[\| \grad f^{\eta_{k+1}}_{\cB_{k+1}}(\bx_{k}) - \grad f^{\eta_{k}}_{\cB_{k+1}}(\bx_{k})  - \grad f^{\eta_{k+1}}(\bx_{k}) +  \grad f^{\eta_{k}}(\bx_{k}) \|^2 | \cF_k \right]         \\
		\leq & \frac{2}{B_{k+1}} \mE \left[\| \grad f^{\eta_{k+1}}_{\xi_{k+1}}(\bx_{k+1}) - \grad f^{\eta_{k+1}}_{\xi_{k+1}}(\bx_{k}) \|^2 | \cF_k \right]                                                      \\
		     & + \frac{2}{B_{k+1}}\mE \left[ \|  \grad f^{\eta_{k+1}}_{\xi_{k+1}}(\bx_k) - \grad f^{\eta_k}_{\xi_{k+1}}(\bx_k) - \grad f^{\eta_{k+1}}(\bx_k) + \grad f^{\eta_k}(\bx_k) \|^2  | \cF_k\right]     \\
		\leq & \frac{2L^2}{B_{k+1}} \|\bx_{k+1} - \bx_k\|^2  + \frac{2\bar{\eta}^2}{B_{k+1}} q(\eta_k),
	\end{align*}}
	where the second inequality uses the fact \(\mE[\|X - \mE[X]\|^2] \leq \mE[\|X\|^2]\), and the last inequality is based on \cref{A-eq:sum-q-bound,asp:f-eta-smooth}.
\end{proof}

\noindent\textbf{\emph{Proof of \cref{lem:ek-bound}}}
{\small
	\begin{align*}
		      & \mE\left[\|\be_{k+1}\|^2 | \cF_k\right]                                                                                                                                                                                                            \\
		=     & \mE\left[\big\|\beta_{k+1}\left(\grad f^{\eta_{k+1}}_{\cB_{k+1}}(\bx_{k+1}) - \grad f^{\eta_{k+1}}(\bx_{k+1}) \right) + ( 1 - \beta_{k+1})\left(\bg_k - \grad f^{\eta_k}(\bx_k)\right) \right.                                                     \\
		\quad & \left.+(1 - \beta_{k+1})\left(\grad f^{\eta_{k+1}}_{\cB_{k+1}}(\bx_{k+1}) - \grad f^{\eta_k}_{\cB_{k+1}}(\bx_k) - \grad f^{\eta_{k+1}}(\bx_{k+1}) + \grad f^{\eta_k}(\bx_k)\right) \big\|^2 | \cF_k\right]                                       \\
		=     & \mE\left[\big\|\beta_{k+1}\left(\grad f^{\eta_{k+1}}_{\cB_{k+1}}(\bx_{k+1}) - \grad f^{\eta_{k+1}}(\bx_{k+1})\right) + (1 - \beta_{k+1}) \left(\grad f^{\eta_{k+1}}_{\cB_{k+1}}(\bx_{k+1}) - \grad f^{\eta_k}_{\cB_{k+1}}(\bx_k) \right. \right. \\
		\quad & \left. - \grad f^{\eta_{k+1}}(\bx_{k+1}) + \grad f^{\eta_k}(\bx_k)\Bigr) \big\|^2 | \cF_k \right]+ (1-\beta_{k+1})^2 \norm{\bg_k - \grad f^{\eta_k}(\bx_k)}^2                                                                                      \\
		\leq  & \mE\left[ 2 \beta_{k+1}^2\norm{\grad f^{\eta_{k+1}}_{\cB_{k+1}}(\bx_{k+1}) - \grad f^{\eta_{k+1}}(\bx_{k+1})}^2 + 2 (1-\beta_{k+1})^2 \big\| \grad f^{\eta_{k+1}}_{\cB_{k+1}}(\bx_{k+1}) \right.       \\
		      & \left. - \grad f^{\eta_{k}}_{\cB_{k+1}}(\bx_{k}) - \grad f^{\eta_{k+1}}(\bx_{k+1})  + \grad f^{\eta_k}(\bx_k) \big\|^2 | \cF_k \right] + (1-\beta_{k+1})^2 \norm{\be_k}^2                                                                                                                    \\
		\leq  & \frac{2\beta_{k+1}^2}{B_{k+1}} h_v^2(\eta_{k+1}) + \frac{4L^2(1-\beta_{k+1})^2}{B_{k+1}}\norm{\bx_{k+1} - \bx_k}^2 + \frac{4\bar{\eta}^2(1-\beta_{k+1})^2}{B_{k+1}} q(\eta_k) + (1-\beta_{k+1})^2 \norm{\be_k}^2                                   \\
		=     & \frac{2\beta_{k+1}^2}{B_{k+1}} h_v^2(\eta_{k+1}) + \frac{4L^2\alpha_k^2(1-\beta_{k+1})^2}{B_{k+1}}\norm{\bg_k}^2 + \frac{4\bar{\eta}^2(1-\beta_{k+1})^2}{B_{k+1}} q(\eta_k) + (1-\beta_{k+1})^2 \norm{\be_k}^2,
	\end{align*}}
	where the second equality holds because the inner product equals 0 in expectation conditioned on \(\cF_k\), the first inequality uses Young's inequality and the second inequality is based on Lemma \ref{A-lem:incremental-variance}.
\hfill\qedsymbol

\begin{theorem}\label{A-thm:AB-VSG-general-result}
	Let \cref{asp:bounded-difference-variance,asp:f-eta-smooth} hold.
	Let parameters satisfy
	{\small\begin{align*}
		\frac{\theta}{\alpha_{k-1}} - \frac{\theta(1-\beta_{k+1})^2}{\alpha_k} -\alpha_k \geq 0.
	\end{align*}}
	Further, when choosing Option I, assume the following inequalities are satisfied
	{\small\begin{align*}
		 & \alpha_1 - L \alpha_1^2 - \frac{8\theta L^2 \alpha_1(1-\beta_2)^2}{B_2} - 2 \alpha_1\gamma_1  - 2 \alpha_2 \gamma_2  \geq 0,                                  \\
		 & \alpha_k  - L \alpha_k^2 - \frac{8\theta L^2 \alpha_k(1-\beta_{k+1})^2}{B_{k+1}} - 2\alpha_{k+1}\gamma_{k+1}\geq 0  \quad \forall k\in \{ 2,3,\cdots, K-1 \}, \\
		 & \alpha_K - L \alpha_K^2 - \frac{8\theta L^2 \alpha_K(1-\beta_{K+1})^2}{B_{K+1}} \geq 0.
	\end{align*}}
	When choosing Option II, assume the following inequalities are satisfied
	{\small\begin{align*}
		 & \alpha_1 - L \alpha_1^2 - \frac{8\theta L^2 \alpha_1(1-\beta_2)^2}{B_2} - 2 \alpha_1\gamma_1 - \frac{8\theta \bar{\eta}\tau_1(1-\beta_2)^2}{\alpha_1B_2} - 2 \alpha_2 \gamma_2 - \frac{8\theta \bar{\eta}\tau_2(1-\beta_3)^2}{\alpha_2 B_3} \geq 0, \\
		 & \alpha_k  - L \alpha_k^2 - \frac{8\theta L^2 \alpha_k(1-\beta_{k+1})^2}{B_{k+1}} - 2\alpha_{k+1}\gamma_{k+1} - \frac{8\theta \bar{\eta}\tau_{k+1}(1-\beta_{k+2})^2}{\alpha_{k+1} B_{k+2}} \geq 0  \quad \forall k\in \{ 2,\cdots, K-1 \},         \\
		 & \alpha_K - L \alpha_K^2 - \frac{8\theta L^2 \alpha_K(1-\beta_{K+1})^2}{B_{K+1}} \geq 0.
	\end{align*}}
	Then the sequence \(\{ \bx_k \}\) generated by \cref{alg:AB-VSG} satisfies
	{\small\begin{align*}
		\mE[\norm{\grad f(\bx_R)}^2] \leq & \frac{2( f(\bx_1) - f^*)}{\sum_{j=1}^K \alpha_j} + \frac{2\sigma^2}{\alpha_0 B_1 \sum_{j=1}^K\alpha_j} + 2h_b^2(\bar{\eta}) +  \frac{4\theta \sigma^2\sum_{k=1}^K \beta_{k+1}^2/ \alpha_k B_{k+1}}{\sum_{j=1}^K \alpha_j} + \Delta,
	\end{align*}}
	where \(\Delta = \frac{4\theta \bar{\eta}\sum_{k=1}^K(1-\beta_{k+1})^2 q(\eta_k)/\alpha_kB_{k+1}}{\sum_{j=1}^K \alpha_j}\) under Option I and \(0\) under Option II.
\end{theorem}

\begin{proof}
	By the algorithm, we know that \(\eta_k\) is non-decreasing and will stop changing when it reaches its upper bound \(\bar{\eta}\).
	Define \(\bg_0 = \bg_1\), we denote \(J\) to be the iteration that \(\eta_k\) achieves the upper bound, then
	{\small\begin{align*}
		h_b^2(\eta_k)\begin{cases} \leq
			             \gamma_k \| \bg_{k-1} \|^2 & \text{if \( 1 \leq k \leq J\)}, \\
			             = h_b^2(\bar{\eta})      & \text{if \(k \geq J + 1\)}.
		             \end{cases} \numberthis \label{A-eq:bound-hb}
	\end{align*}}
	Since \(\eta_k\) does not change when \(k \geq J+1\), the term \(\grad f^{\eta_{k+1}}_{\cB_{k+1}}(\bx_{k+1}) - \grad f^{\eta_k}_{\cB_{k+1}}(\bx_k)\) is reduced to \(\grad f^{\bar{\eta}}_{\cB_{k+1}}(\bx_{k+1}) - \grad f^{\bar{\eta}}_{\cB_{k+1}}(\bx_k) \).
	Hence the term \(q(\eta_k)\) in \cref{lem:ek-bound} will be replaced with 0 when \(k\geq J+1\).
	Based on this observation, we adjust this term as \(\mI_{k\leq J}q(\eta_k)\), where \(\mI_{k\leq J} = 1\) if \(k\leq J\) and \(=0\) if \(k \geq J+1\).

	Define \(\Phi_k \triangleq   f(\bx_k) - f^* + \frac{\theta}{\alpha_{k-1}}\| \be_k \|^2\), where \(\alpha_0>0\) is a positive constant.
	First, we will analyze the result under Option II.
	From \cref{A-lem:Storm-biased-descent-lemma,lem:ek-bound} we have
	{\small\begin{align*}
		     & \mE[\Phi_{k+1} - \Phi_k | \cF_k]                                                                                                                                                                                                                    \\
		\leq & - \frac{\alpha_k}{2} \| \grad f(\bx_k) \|^2 - \left(\frac{\alpha_k}{2} - \frac{L \alpha_k^2}{2} - \frac{4\theta L^2\alpha_k(1-\beta_{k+1})^2}{ B_{k+1}}\right) \| \bg_k \|^2 + \alpha_k h_b^2(\eta_k)                                                 \\
		     & -  \left( \frac{\theta}{\alpha_{k-1}} -  \frac{\theta (1-\beta_{k+1})^2}{\alpha_k}  -\alpha_k \right) \| \be_k \|^2 + \frac{2\theta \beta_{k+1}^2\sigma^2}{\alpha_k B_{k+1}} + \frac{4\theta \bar{\eta}(1-\beta_{k+1})^2}{\alpha_k B_{k+1}} q(\eta_k) \\
		\leq & - \frac{\alpha_k}{2}\| \grad f(\bx_k) \|^2 -  \left(\frac{\alpha_k}{2} - \frac{L \alpha_k^2}{2} - \frac{4\theta L^2\alpha_k(1-\beta_{k+1})^2}{ B_{k+1}}\right) \| \bg_k \|^2 + \frac{2\theta \beta_{k+1}^2\sigma^2}{\alpha_k B_{k+1}}                 \\
		     & + \mI_{k\leq J} \left( \alpha_k \gamma_k + \frac{4\theta \bar{\eta}\tau_k(1-\beta_{k+1})^2}{\alpha_k B_{k+1}}\right)\| \bg_{k-1} \|^2 + \mI_{k \geq J+1}\alpha_k h_b^2(\bar{\eta}),
	\end{align*}}
	where the second inequality follows the algorithm and the condition in the theorem statement.
	Taking expectation with respect to the whole random sequence, then telescoping the inequality from \(k=1\) to \(K\) and rearranging, we have
		{\allowdisplaybreaks\small
			\begin{align*}
				     & \sum_{k=1}^K \alpha_k \mE[ \| \grad  f(\bx_k) \|^2]                                                                                                                                                                                                                                                                                                                           \\
				\leq & \mE \left[2 \Phi_1 - \sum_{k=1}^K \left(\alpha_k - L\alpha_k^2  - \frac{8\theta L^2 \alpha_k (1-\beta_{k+1})^2}{B_{k+1}} \right)\| \bg_k \|^2   + \sum_{k=J+1}^K 2\alpha_k h_b^2(\bar{\eta})  \right.                                                                                                                                                                           \\
				     & \left.+ \sum_{k=1}^J \left(2\alpha_k \gamma_k + \frac{8\theta \bar{\eta} \tau_k(1-\beta_{k+1})^2}{\alpha_k B_{k+1}} \right)\| \bg_{k-1} \|^2  + \sum_{k=1}^K \frac{4\theta \beta_{k+1}^2\sigma^2}{\alpha_k B_{k+1}}\right]                                                                                                                                                      \\
				\leq & \mE \left[ 2 \Phi_1  - \sum_{k=1}^K \left(\alpha_k - L\alpha_k^2  - \frac{8\theta L^2 \alpha_k (1-\beta_{k+1})^2}{B_{k+1}} \right)\| \bg_k \|^2   + \sum_{k=J+1}^K 2\alpha_k h_b^2(\bar{\eta})  \right.                                                                                                                                                                         \\
				     & \left.+ \sum_{k=1}^{K-1} \left(2\alpha_{k+1} \gamma_{k+1} + \frac{8\theta  \bar{\eta} \tau_{k+1}(1-\beta_{k+2})^2}{\alpha_{k+1} B_{k+2}}\right) \| \bg_k \|^2 \right.\\
         & \left.+ \left(2\alpha_1 \gamma_1 + \frac{8\theta \bar{\eta} \tau_1 (1 - \beta_2)^2}{\alpha_1 B_2}\right) \| g_0\|^2 \right.       \left. +  \sum_{k=1}^K \frac{4\theta \beta_{k+1}^2\sigma^2}{\alpha_k B_{k+1}} \right] \\
				=    & \mE\left[2 \Phi_1  - \sum_{k=2}^{K-1}\left(\alpha_k  - L \alpha_k^2 - \frac{8\theta L^2 \alpha_k(1-\beta_{k+1})^2}{B_{k+1}} - 2\alpha_{k+1}\gamma_{k+1} - \frac{8\theta \bar{\eta}\tau_{k+1}(1-\beta_{k+2})^2}{\alpha_{k+1} B_{k+2}} \right) \norm{\bg_k}^2 \right.                                                                                                             \\
				     & \left. - \left(\alpha_1 - L \alpha_1^2 - \frac{8\theta L^2 \alpha_1(1-\beta_2)^2}{B_2} - 2 \alpha_1\gamma_1 - \frac{8\theta \bar{\eta}\tau_1(1-\beta_2)^2}{\alpha_1B_2} - 2 \alpha_2 \gamma_2 - \frac{8\theta \bar{\eta}\tau_2(1-\beta_3)^2}{\alpha_2 B_3}\right) \norm{g_1}^2  \right.                                                                                       \\
				     & \left. - \left(\alpha_K - L \alpha_K^2 - \frac{8\theta L^2 \alpha_K(1-\beta_{K+1})^2}{B_{K+1}} \right) \norm{\bg_k}^2  + \sum_{k=J+1}^K \alpha_k h_b^2(\bar{\eta}) +  \sum_{k=1}^K \frac{4\theta \beta_{k+1}^2\sigma^2}{\alpha_k B_{k+1}}  \right]                                                                                                                              \\
				\leq & 2(f(\bx_1) - f^*) + \mE\left[\frac{2\norm{\be_1}^2}{\alpha_0} + \sum_{k=J+1}^K 2\alpha_k h_b^2(\bar{\eta}) + \sum_{k=1}^K \frac{4\theta \beta_{k+1}^2\sigma^2}{\alpha_k B_{k+1}}\right]                                                                                                                                                                                         \\
				\leq & 2(f(\bx_1) - f^*)+ \frac{2\sigma^2}{\alpha_0 B_1} + \mE \left[ \sum_{k=J+1}^K 2\alpha_k h_b^2(\bar{\eta}) + \sum_{k=1}^K \frac{4\theta \beta_{k+1}^2\sigma^2}{\alpha_k B_{k+1}}\right],
			\end{align*}
		}
	where the equality holds because we set \(g_0 = g_1\) and the second to last inequality follows the condition in the theorem statement.
	Dividing both sides by \(\sum_{j=1}^K\alpha_j\), we get
	{\small\begin{align*}
		\mE [\| \grad  f(\bx_R) \|^2 ] \leq \frac{2( f(\bx_1) - f^*)}{\sum_{j=1}^K \alpha_j} + \frac{2\sigma^2}{\alpha_0B_1 \sum_{j=1}^K\alpha_j} + 2h_b^2(\bar{\eta}) +  \frac{4\theta \sigma^2\sum_{k=1}^K \beta_{k+1}^2/ \alpha_k B_{k+1}}{\sum_{j=1}^K \alpha_j}.
	\end{align*}}
	Next, we will analyze Option I. With the same definition of \(\Phi_k\), we have the new inequality as
	{\small\begin{align*}
		     & \mE[\Phi_{k+1} - \Phi_k | \cF_k]                                                                                                                                                                                                    \\
		\leq & - \frac{\alpha_k}{2}\| \grad f(\bx_k) \|^2 -  \left(\frac{\alpha_k}{2} - \frac{L \alpha_k^2}{2} - \frac{4\theta L^2\alpha_k(1-\beta_{k+1})^2}{ B_{k+1}}\right) \| \bg_k \|^2 + \frac{2\theta \beta_{k+1}^2\sigma^2}{\alpha_k B_{k+1}} \\
		     & + \mI_{k\leq J} \alpha_k \gamma_k \| \bg_{k-1} \|^2 + \mI_{k \geq J+1} \alpha_k h_b^2(\bar{\eta}) + \frac{4\theta \bar{\eta}(1-\beta_{k+1})^2}{\alpha_k B_{k+1}}q(\eta_k).
	\end{align*}}
	Taking the expectation and telescoping, we have
	{\small\begin{align*}
		     & \sum_{k=1}^K \alpha_k \mE[ \| \grad  f(\bx_k) \|^2]                                                                                                                                                                                                                          \\
		\leq & \mE \left[2 \Phi_1 - \sum_{k=1}^K \left(\alpha_k - L\alpha_k^2  - \frac{8\theta L^2 \alpha_k (1-\beta_{k+1})^2}{B_{k+1}} \right)\| \bg_k \|^2   + \sum_{k=J+1}^K 2\alpha_k h_b^2(\bar{\eta})  \right.                                                                          \\
		     & \left.+ \sum_{k=1}^J 2\alpha_k \gamma_k \| \bg_{k-1} \|^2  + \sum_{k=1}^K \frac{4\theta\bar{\eta}(1-\beta_{k+1})^2}{\alpha_k B_{k+1}}q(\eta_k) + \sum_{k=1}^K \frac{4\theta \beta_{k+1}^2\sigma^2}{\alpha_k B_{k+1}}\right]                                                    \\
		\leq & \mE\left[2 \Phi_1  - \sum_{k=2}^{K-1}\left(\alpha_k  - L \alpha_k^2 - \frac{8\theta L^2 \alpha_k(1-\beta_{k+1})^2}{B_{k+1}} - 2\alpha_{k+1}\gamma_{k+1}  \right) \norm{\bg_k}^2 + \sum_{k=J+1}^K \alpha_k h_b^2(\bar{\eta}) \right.                                            \\
		     & \left. - \left(\alpha_1 - L \alpha_1^2 - \frac{8\theta L^2 \alpha_1(1-\beta_2)^2}{B_2} - 2 \alpha_1\gamma_1 - 2 \alpha_2 \gamma_2 \right) \norm{g_1}^2   +  \sum_{k=1}^K \frac{4\theta \beta_{k+1}^2\sigma^2}{\alpha_k B_{k+1}}  \right.                                     \\
		     & \left. - \left(\alpha_K - L \alpha_K^2 - \frac{8\theta L^2 \alpha_K(1-\beta_{K+1})^2}{B_{K+1}} \right) \norm{\bg_k}^2   + \sum_{k=1}^K \frac{4\theta\bar{\eta}(1-\beta_{k+1})^2}{\alpha_k B_{k+1}}q(\eta_k) \right]                                                            \\
		\leq & 2(f(\bx_1) - f^*)+ \frac{2\sigma^2}{\alpha_0 B_1} + \mE \left[ \sum_{k=J+1}^K 2\alpha_k h_b^2(\bar{\eta}) + \sum_{k=1}^K \frac{4\theta \beta_{k+1}^2\sigma^2}{\alpha_k B_{k+1}}  + \sum_{k=1}^K \frac{4\theta\bar{\eta}(1-\beta_{k+1})^2}{\alpha_k B_{k+1}}q(\eta_k)\right].
	\end{align*}}
	Dividing \(\sum_{j=1}^K\alpha_j\) on both sides, the final result follows.
\end{proof}

\noindent\textbf{\emph{Proof of \cref{thm:AB-VSG-fixed-step-rate}}}
	By \(c \leq \frac{K^{1/3}}{8LB^{1/6}}\) we have \(\beta = \frac{64L^2 c^2B^{4/3}K^{-2/3}}{B}\leq 1.\)	By \(c \leq \frac{K^{1/3}}{2LB^{2/3}}\), we have \(\alpha \leq \frac{1}{2L}\).
	We need to prove that the defined parameters satisfy the condition of \cref{A-thm:AB-VSG-general-result}.
	We will start with Option II.
	Notice that when parameters are fixed, the last three conditions will hold if the following inequality holds
	\begin{align*}
		\alpha - L\alpha^2 - \frac{8\theta L^2\alpha(1-\beta)^2}{B} - 4\alpha\gamma - \frac{16\theta \bar{\eta}\tau(1-\beta)^2}{\alpha B} \geq 0.
	\end{align*}
	Since \(\alpha \leq \frac{1}{2L}\), \(\gamma \leq \frac{1}{16}\),  \(\tau \leq \frac{L^2\alpha^2}{2\bar{\eta}}\), \(\beta \leq 1\), by letting \(\theta = \frac{B}{64L^2}\), we have
	{\small\begin{align*}
		     & \alpha - L\alpha^2 - \frac{8\theta L^2\alpha(1-\beta)^2}{B} - 4\alpha\gamma - \frac{16\theta \bar{\eta}\tau(1-\beta)^2}{\alpha B}
		\geq  \frac{\alpha}{2} - \frac{8 \theta L^2 \alpha }{B} - \frac{\alpha}{4} - \frac{8 \theta L^2 \alpha}{B} = 0.
	\end{align*}}
	Next, we need to verify the first inequality in \cref{A-thm:AB-VSG-general-result}. With \(\beta = \frac{64L^2 \alpha^2}{B} \leq 1\),
	{\small\begin{align*}
		\frac{\theta}{\alpha} - \frac{\theta (1-\beta)^2}{\alpha} - \alpha \geq \frac{\theta }{\alpha} - \frac{\theta(1-\beta)}{\alpha} - \alpha = 0 .
	\end{align*}}
	Hence the conditions are all satisfied. Inserting these parameters into the result of \cref{A-thm:AB-VSG-general-result}, we obtain the desired result.
	Next, we will discuss Option I. The conditions of Option I in \cref{A-thm:AB-VSG-general-result} are satisfied with the parameters in the statement of the theorem since \(\frac{8\theta \bar{\eta}(1-\beta)^2}{\alpha B}\geq 0\). So the result will follow with
	{\small \begin{align*}
	    \Delta = \frac{4\theta \bar{\eta}\sum_{k=1}^K(1-\beta)^2q(\eta_k)/\alpha B}{K \alpha} \leq \frac{\bar{\eta}\sum_{k=1}^Kq(\eta_k)}{16L^2c^2B^{4/3}K^{1/3}}
	\end{align*}  }
\hfill\qedsymbol

\cref{thm:AB-VSG-varying-step-rate}
	From \(w\geq 8c^3L^3\), we have \(\alpha_k \leq \alpha_0 = \frac{c}{w^{1/3}} \leq \frac{1}{2L}\). From \( w \geq (\frac{sc}{2L})^3\), we have \(\beta_{k+1} = s\alpha_k^2 \leq s \alpha_0^2 \leq \frac{sc}{w^{1/3}}\frac{1}{2L} \leq 1\). Similar to the analysis of \cref{thm:AB-VSG-fixed-step-rate}, with Option II, the three conditions in \cref{A-thm:AB-VSG-general-result} hold when the following inequality holds
	{\small\begin{align*}
		&\alpha_k - L \alpha_k^2 - 8\theta L^2 \alpha_k(1-\beta_{k+1})^2 - 2 \alpha_k \gamma - 2\alpha_{k+1}\gamma - \\
  & \quad \frac{8\theta \bar{\eta}\tau_k(1-\beta_{k+1})^2}{\alpha_k} - \frac{8\theta \bar{\eta}\tau_{k+1}(1-\beta_{k+2})^2}{\alpha_{k+1}} \geq 0.
	\end{align*}}
	From decreasing \(\{ \alpha_k \}\), \(\alpha_k \leq \frac{1}{2L}\) and \(\beta_{k+1} \leq 1\), the left-hand side of the above inequality is lower bounded by
	{\small\begin{align*}
		\frac{\alpha_k}{2} - 8 \theta L^2 \alpha_k - 4 \alpha_k \gamma - 8\theta L^2 \alpha_k = 0,
	\end{align*}}
	where the equality holds by setting \(\theta = \frac{1}{64L^2}\). To verify the first condition in \cref{A-thm:AB-VSG-general-result}, {first by concavity of \(x^{1/3}\) when \(x\geq 0\) }, we have
	{\small\begin{align*}
		\frac{1}{\alpha_k} - \frac{1}{\alpha_{k-1}} & = \frac{1}{c}\left((w+k)^{1/3} - (w+k-1)^{1/3}\right) \leq \frac{1}{3c(w+k-1)^{2/3}} \leq \frac{1}{3c(w+k)^{2/3}} \\
		                                            & = \frac{\alpha_k^2}{3c^3} \leq \frac{\alpha_k}{3c^3}\frac{1}{2L} = \frac{\alpha_k}{6c^3L}.
	\end{align*}}
	The first condition will be bounded as
	\begin{align*}
		\frac{\theta}{\alpha_{k-1}} - \frac{\theta(1-\beta_{k+1})^2}{\alpha_k} - \alpha_k & \geq \frac{\theta}{\alpha_{k-1}} - \frac{\theta(1-\beta_{k+1})}{\alpha_k} - \alpha_k             \\
		                                                                                  & =\theta \left( \frac{1}{\alpha_{k-1}} - \frac{1}{\alpha_k}\right) + \theta s \alpha_k - \alpha_k \\
		                                                                                  & \geq - \frac{-\theta \alpha_k}{6c^3L} + \theta s \alpha_k - \alpha_k \geq 0,
	\end{align*}
	where the last inequality holds because \(s\geq \frac{1}{6c^3L}+ 64L^2\).
	To derive the final rate, notice
	{\small\begin{align*}
		\frac{1}{\sum_{j=1}^K\alpha_j} \leq \frac{1}{K\alpha_K} = \frac{(w+k)^{1/3}}{cK} \leq \frac{w^{1/3}}{cK} + \frac{1}{cK^{2/3}}.
	\end{align*}}
	Furthermore,
	{\small\begin{align*}
		\sum_{k=1}^K \frac{\beta_{k+1}^2}{\alpha_k} = s^2 \sum_{k=1}^K \alpha_k^3 = s^2c^3 \sum_{k=1}^K \frac{1}{w+k} \leq s^2c^3 \sum_{k=1}^K \frac{1}{k+1} \leq s^2 c^3 \log(K+2).
	\end{align*}}
	The final result is derived as
	{\small\begin{align*}
		     & \mE [\norm{\grad f(\bx_R)}^2]                                                                                                                                                 \\
		\leq & \frac{1}{\sum_{j=1}^K\alpha_j}\left(2(f(\bx_1) - f^*) + \frac{2w^{1/3}\sigma^2}{c} + \frac{s^3c^3\sigma^2}{16L^2} \log(K+2)\right) + 2h_b^2(\bar{\eta})                       \\
		\leq & \left(2(f(\bx_1) - f^*) + \frac{2w^{1/3}\sigma^2}{c} + \frac{s^3c^3\sigma^2}{16L^2} \log(K+2)\right)\left(\frac{w^{1/3}}{cK}+ \frac{1}{cK^{2/3}} \right)+ 2h_b^2(\bar{\eta}).
	\end{align*}}
	With Option I, the same analysis {as  \cref{thm:AB-VSG-fixed-step-rate} follows as}
	{\small\begin{align*}
		\Delta & \leq \frac{\bar{\eta}\sum_{k=1}^K q(\eta_k)/\alpha_k}{16L^2\sum_{j=1}^K \alpha_j} \leq \frac{\bar{\eta}\sum_{k=1}^K q(\eta_k)}{16L^2 K \alpha_K^2}  = \frac{\bar{\eta}\sum_{k=1}^K q(\eta_k)}{16L^2c^2K}(w+K)^{2/3}                                       \\
		                                      & \leq \frac{\bar{\eta}\sum_{k=1}^K q(\eta_k)}{16L^2c^2K}(w^{2/3}+K^{2/3})                             = \frac{\bar{\eta}\sum_{k=1}^K q(\eta_k)}{16L^2c^2}\left(\frac{w^{2/3}}{K}+ \frac{1}{K^{1/3}}\right).
	\end{align*}}
\hfill\qedsymbol

\section{Proofs of the nonsmooth regime}
\textbf{\emph{Proof of \cref{lemma:prox-descent-lemma}}.}
By the smoothness of \(f(\bx)\), we have \(| f(\by) - f(\bx) - \fprod{\grad f(\bx), \by - \bx} | \leq \frac{L}{2}\| \bx - \by \|^2\), which can derive the following two inequalities
	\begin{align*}
		f(\bz) - f(\bx) & \leq \fprod{\grad f(\bx), \bz - \bx} + \frac{L}{2}\| \bz - \bx\|^2,  \\
		f(\bx) - f(\by) & \leq \fprod{\grad f(\bx), \bx - \by} + \frac{L}{2}\| \by - \bx \|^2.
	\end{align*}
	Adding up these two inequalities, using the fact \(w\) is 1-strongly convex, we get
	\begin{align*}
		f(\bz) - f(\by) \leq \fprod{\grad f(\bx), \bz - \by} + LD_\omega(\by,\bx) + L D_\omega(\bz, \bx). \numberthis \label{eq:lip-inequality}
	\end{align*}
	By the three-point lemma (Lemma 3 in \cite{ghadimi2016MinibatchStochastic}),
	{\small\begin{align*}
		\fprod{\bg_k, \bx_{k+1} - \by}                & \leq r(\by) - r(\bx_{k+1}) + \frac{1}{\alpha_k}\left(D_\omega(\by, \bx_k) - D_\omega(\by, \bx_{k+1}) - D_\omega(\bx_{k+1}, \bx_k) \right), \numberthis \label{eq:three-point-1}                  \\
		\fprod{\grad f(\bx_k), \hat{\bx}_{k+1} - \by} & \leq r(\by) - r(\hat{\bx}_{k+1}) + \frac{2}{\alpha_k}\left(D_\omega(\by, \bx_k) - D_\omega(\by, \hat{\bx}_{k+1}) - D_\omega(\hat{\bx}_{k+1}, \bx_k)\right). \numberthis \label{eq:three-point-2}
	\end{align*}}
	Adding \cref{eq:lip-inequality,eq:three-point-1} with \(\bx = \bx_k\), \(\by = \hat{\bx}_{k+1}\), and \(\bz = \bx_{k+1}\), we have
	\begin{align*}
		     & f(\bx_{k+1}) + r(\bx_{k+1}) - f(\hat{\bx}_{k+1}) - r(\hat{\bx}_{k+1})                                                                                                          \\
		\leq & \fprod{\grad f(\bx_k) - \bg_k, \bx_{k+1} - \hat{\bx}_{k+1}} + \left(L + \frac{1}{\alpha_k}\right) D_\omega(\hat{\bx}_{k+1}, \bx_k) + \left(L - \frac{1}{\alpha_k} \right) D_\omega(\bx_{k+1}, \bx_k) \\
		     & - \frac{1}{\alpha_k}D_\omega(\hat{\bx}_{k+1}, \bx_{k+1}).
	\end{align*}
	Adding \cref{eq:lip-inequality,eq:three-point-2} with \(\bx = \bx_k\), \(\by = \bx_k\), and \(\bz = \hat{\bx}_{k+1}\), we have
	\begin{align*}
		f(\hat{\bx}_{k+1}) + r(\hat{\bx}_{k+1}) - f(\bx_k) - r(\bx_k) \leq  \left( L - \frac{2}{\alpha_k}\right) D_\omega(\hat{\bx}_{k+1}, \bx_k) - \frac{2}{\alpha_k}D_\omega(\bx_k, \hat{\bx}_{k+1}).
	\end{align*}
	Adding up the above two inequalities, we will get
	\begin{align*}
		     & \Psi(\bx_{k+1}) - \Psi(\bx_k)                                                                                                                                                                  \\
		\leq & \fprod{\grad f(\bx_k) - \bg_k, \bx_{k+1} - \hat{\bx}_{k+1}} - \left(\frac{1}{\alpha_k} - 2L \right) D_\omega(\hat{\bx}_{k+1}, \bx_k)   - \left(\frac{1}{\alpha_k} - L \right) D_\omega(\bx_{k+1} ,\bx_k) \\
		     & - \frac{1}{\alpha_k}D_\omega(\hat{\bx}_{k+1}, \bx_{k+1}) - \frac{2}{\alpha_k}D_\omega(\bx_k , \hat{\bx}_{k+1})                                                                                           \\
		\leq & \frac{\alpha_k}{2}\| \grad f(\bx_k) - \bg_k \|^2 + \frac{1}{2\alpha_k}\| \bx_{k+1} -\hat{\bx}_{k+1} \|^2 - \left(\frac{1}{\alpha_k} - 2L \right) D_\omega(\hat{\bx}_{k+1}, \bx_k)                   \\
		     & - \left(\frac{1}{\alpha_k} - L \right) D_\omega(\bx_{k+1} ,\bx_k) - \frac{1}{\alpha_k}D_\omega(\hat{\bx}_{k+1}, \bx_{k+1}) - \frac{2}{\alpha_k}D_\omega(\bx_k , \hat{\bx}_{k+1}),
	\end{align*}
	where the last inequality comes from Young's inequality. Using the inequality \(D_\omega(\bx,\by) \geq \frac{1}{2}\|\bx-\by\|^2\) and the fact \(\frac{1}{\alpha_k} - 2L \geq 0\), we can derive the final result.
\hfill\qedsymbol

\noindent\textbf{\emph{Proof of \cref{thm:ABSA-noncvx}}.}
Define \(\cK \triangleq  k \in [K-1]: \eta_k = \bar{\eta}\).
	Taking expectation conditioned on \(\cF_k\) on both sides of \cref{eq:prox-descent-lemma}, by \cref{rmk:total-error} and the update condition \cref{eq:ABSA-condition}, for \(k\geq 1\), we have
{\small	\begin{align*}
		\mE[\Psi(\bx_{k+1}) | \cF_k ]- \Psi(\bx_k) \leq & - \left(\frac{3}{2\alpha_k} - L\right) \norm{\hat{\bx}_{k+1} - \bx_k}^2 - \left(\frac{1}{2\alpha_k} - \frac{L}{2}\right) \mE[\norm{\bx_{k+1} - \bx_k}^2 | \cF_k] \\
   &  + \mI_{k \notin \cK} \left(\frac{1}{4\alpha_{k-1}} - \frac{L}{4}\right) \norm{\bx_k - \bx_{k-1}}^2 + \mI_{k \in \cK} \frac{\alpha_k}{2}h_b^2 (\bar{\eta}) + \frac{\alpha_k \sigma^2}{2B_k}.
	\end{align*}
 }
	Taking expectation on the whole random sequence and telescoping the inequality from \(k=0\) to \(K-1\), by the tower property, we have
{\small	\begin{align*}
		     & \mE[\Psi(\bx_K)] - \Psi(\bx_0)                                                                                                                                                                                                                                                     \\
		\leq & - \sum_{k=0}^{K-1} \left(\frac{3}{2\alpha_k} - L\right)\mE[\norm{\hat{\bx}_{k+1} - \bx_k}^2] + \frac{\alpha_0 h_b^2(\eta_0)}{2} - \sum_{k \notin \cK} \left( \frac{1}{4\alpha_k} - \frac{L}{4}\right) \mE[\norm{\bx_{k+1} - \bx_k}^2] \\
		     & - \mI_{K-1 \notin \cK}\left(\frac{1}{2\alpha_{K-1}} - \frac{L}{2}\right) \mE[\norm{\bx_K - \bx_{K-1}} ] + \sum_{k \in \cK}\frac{\alpha_k}{2}h_b^2(\bar{\eta}) +  \sum_{k=0}^{K-1} \frac{\alpha_k \sigma^2}{2B_k}                                                                                                                                                                                      \\
		\leq & - \sum_{k=0}^{K-1} \left(\frac{1}{\alpha_k} - L\right)\mE[\norm{\hat{\bx}_{k+1} - \bx_k}^2] + \frac{\alpha_0 h_b^2(\eta_0)}{2} +\sum_{k \in \cK}\frac{\alpha_k}{2}h_b^2(\bar{\eta})+ \sum_{k=0}^{K-1} \frac{\alpha_k \sigma^2}{2B_k},
	\end{align*}
 }
 where the last inequality holds because \(\alpha_k \leq 1/(2L)\)
	Rearranging terms, we can derive
	{\small\begin{align*}
		     & \sum_{k=0}^{K-1} \left(\frac{1}{\alpha_k} - L\right)\mE[\norm{\hat{\bx}_{k+1} - \bx_k}^2] 
		\leq \Psi(\bx_0) - \Psi^* + \frac{\alpha_0 h_b^2(\eta_0)}{2} +\sum_{k \in \cK}\frac{\alpha_k}{2}h_b^2(\bar{\eta})+ \sum_{k=0}^{K-1} \frac{\alpha_k \sigma^2}{2B_k}.
	\end{align*}}
	Dividing both sides by \(\sum_{j=0}^{K-1} (\alpha_j - L\alpha_j^2)\) and by the probability distribution of \(R\), we can obtain the final result.
\hfill\qedsymbol

\vspace{0.2cm}
\noindent\textbf{\emph{Proof of \cref{thm:Multi-STORM-noncvx}}.}
By \(\alpha \leq \frac{1}{4\sqrt{2}L^2}\), it is guaranteed that \(\beta = 32\alpha^2 L^2 \leq 1\). 	

First, the property of the inner loop will be discussed. From \cref{lemma:prox-descent-lemma}, for any fixed \(s\),
{\small\begin{align*}
	\Psi(\bx_{k+1}^s) - \Psi(\bx_k^s)  \leq & \alpha \| \grad f(\bx_k^s) - \grad f^{\eta_s}(\bx_k^s) \|^2 + \alpha \| \grad f^{\eta_s} - \bg_k \|^2 \\
	                                    & - \left(\frac{3}{2\alpha} - L\right) \norm{\hat{\bx}_{k+1}^s - \bx_k^s}^2  - \left(\frac{1}{2\alpha} - \frac{L}{2}\right) \norm{\bx_{k+1}^s - \bx_k^s}^2.
\end{align*}}
Define \(V_k^s \triangleq  \Psi(\bx_k^s) - \Psi^* + \frac{1}{32L^2 \alpha}\norm{\be_k^s}\), by \cref{lem:ek-bound} and with the fact \(q(\eta_s) = 0\) for fixed bias STORM, we have
{\small
\begin{align*}
	     & \mE[V_{k+1}^s - V_k^s | \cF_k]                                                                                                                                                                                               \\
	\leq & - \left(\frac{3}{2\alpha} - L\right) \norm{\hat{\bx}_{k+1}^s - \bx_k^s}^2 + \left(\frac{(1- \beta)^2}{32L^2\alpha} - \frac{1}{32L^2\alpha} + \alpha  \right)\norm{\be_k^s}^2 + \frac{2\beta^2\sigma^2}{32L^2\alpha B_{k+1}^s}            \\
	     & + \alpha h_b^2(\eta_s)   + \left(\frac{(1-\beta)^2}{8\alpha B_{k+1}^s} - \frac{1}{2\alpha} + \frac{L}{2}\right) \norm{\bx_{k+1}^s - \bx_k^s}^2.
\end{align*}
}
Notice that by \(\alpha\leq 1/(2L)\), it can be calculated that \(\frac{(1-\beta)^2}{8\alpha B_{k+1}^s} - \frac{1}{2\alpha} + \frac{L}{2} \leq \frac{1}{8\alpha} -  \frac{1}{2\alpha} + \frac{L}{2} \leq -\frac{L}{4} <0\).

Taking expectation over the whole random sequence and telescoping the above inequality from \(k=0\) to \(K-1\), with \(\beta = 32L^2 \alpha^2\), we know \(\frac{(1- \beta)^2}{32L^2\alpha} - \frac{1}{32L^2\alpha} + \alpha \leq \frac{1- \beta}{32L^2\alpha} - \frac{1}{32L^2\alpha} + \alpha=0\). Hence we have
{\small\begin{align*}
	     & \sum_{k=0}^{K-1}\left( \frac{3\alpha}{2} - L \alpha^2\right)\frac{\mE[\norm{\hat{\bx}_{k+1}^s - \bx_k^s}^2]}{\alpha^2}                                                                                                                            \\
	\leq &  \mE[V_0^s - V_K^s] + \sum_{k=0}^{K-1}\frac{64\sigma^2L^2\alpha^3}{B_{k+1}^s} + K\alpha h_b^2(\eta_s)                                                              \\
    \leq & \mE[\Psi(\bx_0^s) - \Psi(\bx_K^s)] + \frac{\sigma^2}{32L^2\alpha B_0^s} + \sum_{k=0}^{K-1}\frac{64\sigma^2L^2\alpha^3}{B_{k+1}^s}  + K\alpha h_b^2(\eta_s).
\end{align*}
}
Telescoping the above inequality from \(s=0\) to \(S-1\), with the definition \(\bx_0^{s+1} = \bx_K^s\), we have
{\small\begin{align*}
	     & \sum_{s=0}^{S-1} \sum_{k=0}^{K-1}\left( \frac{3\alpha}{2} - L \alpha^2\right)\frac{\mE[\norm{\hat{\bx}^s_{k+1} - \bx^s_k}^2]}{\alpha^2}  \\
	\leq & \Psi(\bx_0^0) - \Psi^* + \sum_{s=0}^{S-1} \frac{ \sigma^2}{32L^2\alpha B_0^s} + \sum_{s=1}^{S-1}\sum_{k=0}^{K-1}\frac{64\sigma^2L^2\alpha^3 }{B_{k+1}^s}  + K\alpha\sum_{s=0}^{S-1} h_b^2(\eta_s) .
\end{align*}}
Dividing both sides by \(SK\alpha\), we have
{\small\begin{align*}
	     & \frac{1}{SK}\sum_{s=0}^{S-1} \sum_{k=0}^{K-1}\left( \frac{3}{2} - L \alpha\right)\frac{\mE[\norm{\hat{\bx}^s_{k+1} - \bx^s_k}^2]}{\alpha^2}  \\
	\leq & \frac{\Psi(\bx_0^0) - \Psi^*}{SK\alpha} + \frac{\sum_{s=0}^{S-1}\sigma^2 / B_0^s}{32L^2SK\alpha^2} + \frac{ \sum_{s=0}^{S-1}\sum_{k=0}^{K-1}64 \sigma^2 L^2\alpha^2/B_{k+1}^s }{SK}   + \frac{1}{S}\sum_{s=0}^{S-1}h_b^2(\eta_s).
\end{align*}}
Define \(T \triangleq  SK\), setting \(K = T^{2/3}, S = T^{1/3}, \alpha = T^{-1/3}\), and \(B_k^s \equiv 1\) for all \(k\geq 0\), the above inequality derives the final result.
\hfill\qedsymbol

\end{appendices}

\end{document}